\theoremstyle{plain}
\newtheorem{thm}{Theorem}
\theoremstyle{definition}
\newaliascnt{defn}{thm}
\newtheorem{defn}[defn]{Definition}
\theoremstyle{remark}
\newaliascnt{rem}{thm}
\theoremstyle{plain}
\newaliascnt{fact}{thm}
\newtheorem{fact}[fact]{Fact}
\theoremstyle{plain}
\newaliascnt{lem}{thm}
\newtheorem{lem}[lem]{Lemma}
\theoremstyle{plain}
\newaliascnt{prop}{thm}
\newtheorem{prop}[prop]{Proposition}
\theoremstyle{plain}
\newaliascnt{proc}{thm}
\newtheorem{proc}[proc]{Procedure}
\theoremstyle{plain}
\newaliascnt{cor}{thm}
\newtheorem{cor}[cor]{Corollary}
\theoremstyle{definition}
\newaliascnt{example}{thm}
\theoremstyle{plain}
\newaliascnt{conv}{thm}
\newtheorem{conv}[conv]{Convention}
\def\equationautorefname~#1\null{(#1)\null}
\newcommand{\A}{\mathcal A} 
\newcommand{\B}{\mathcal B} 
\newcommand{\C}{\mathcal C} 
\newcommand{\D}{\mathcal D} 
\newcommand{\F}{\mathcal F} 
\renewcommand{\H}{\mathcal H} 
\newcommand{\X}{\mathcal X} 
\newcommand{\Y}{\mathcal Y} 
\newcommand{\Z}{\mathcal Z} 
\newcommand{\K}{\mathcal K} 
\newcommand{\R}{\mathcal R} 
\newcommand{\M}{\mathcal M} 
\renewcommand{\P}{\mathcal P} 
\newcommand{\T}{\mathcal T} 
\newcommand{\U}{\mathcal U} 
\newcommand{\V}{\mathcal V} 
\newcommand{\W}{\mathcal W} 
\newcommand{\AP}{\widetilde{\mathcal A}}
\newcommand{\CP}{\widetilde{\mathcal C}}
\newcommand{\XP}{\widetilde{\mathcal X}}
\newcommand{\ap}{\widetilde a}
\newcommand{\cp}{\widetilde c}
\newcommand{\cb}{\overline c}
\newcommand{\AB}{\overline{\mathcal A}}
\newcommand{\CB}{\overline{\mathcal C}}
\newcommand{\XB}{\overline{\mathcal X}}
\newcommand{\PA}[5]{\mathcal{PA}_{#1,#2;#3}^{\left(#4;#5\right)}}
\newcommand{\CA}[4]{\mathcal{CA}_{#1,#2}^{\left(#3;#4\right)}}
\newcommand{\VA}[4]{\mathcal{VA}_{#1,#2;#3}^{\left(#4\right)}}
\newcommand{\PVA}[4]{\mathcal{PVA}_{#1,#2;#3}^{\left(#4\right)}}
\newcommand{\AR}[4]{\mathcal{AR}_{#1;#2,#3}^{\left(#4\right)}}
\newcommand{\ksub}[2]{\left[#1;#2\right]}
\begin{document}
\let\ref\autoref

\title{Enumeration of Tree-like Maps with Arbitrary Number of Vertices}

\author{Aaron Chun Shing Chan}

\date{December 5, 2016}
\begin{abstract}
This paper provides the generating series for the embedding of tree-like
graphs of arbitrary number of vertices, accourding to their genus.
It applies and extends the techniques of Chan \cite{Chan:2017-1},
where it was used to give an alternate proof of the Goulden and Slofstra
formula. Furthermore, this greatly generalizes the famous Harer-Zagier
formula \cite{Harer-Zagier:1986}, which computes the Euler characteristic
of the moduli space of curves, and is equivalent to the computation
of one vertex maps.
\end{abstract}

\maketitle

\section{\label{sec:Introduction}Introduction}

Let $n$ and $k$ be integers such that $0\le n,k$. We use $\left[n\right]$
to denote the set $\left\{ 1,\dots,n\right\} $, $\left[n\right]^{k}$
to denote the Cartesian product of $\left[n\right]$ with itself $k$
times, and $\ksub{n}{k}$ to denote the set of all $k$-subsets of
$\left[n\right]$. If $S$ is a set of even cardinality, then a \emph{pairing}
$\mu$ of $S$ is a partition of $S$ into disjoint subsets of size
2. Next, a \emph{partial pairing} $T$ of a set $S$ is a pairing
on a subset $S^{\prime}\subseteq S$ of even cardinality. If $\left|S^{\prime}\right|=2k$,
then $T$ is called a \emph{$k$-partial pairing} of $S$. The set
$S^{\prime}$ is called the \emph{support} of the partial pairing
$T$. Finally, the set of all $k$-partial pairings of $\left[n\right]$
is denoted as $\T_{n,k}$, which has cardinality $\left|\T_{n,k}\right|=\binom{n}{2k}\left(2k-1\right)!!$,
where $\left(2k-1\right)!!=\prod_{j=1}^{k}\left(2j-1\right)$ is the
double factorial, with the convention that $\left(-1\right)!!=1$.

Let $p$ and $n$ be positive integers. We use $\left[p\right]^{\underline{n}}$
to denote the set $\left\{ 1^{^{\underline{n}}},2^{^{\underline{n}}},\dots,p^{^{\underline{n}}}\right\} $,
whose elements $i^{\underline{n}}$, $i=1,\dots,p$, are regarded
as a labelled version of the integer $i$, labelled by the ``$\underline{n}$''
in the superscript position. Then, suppose $\mathbf{p}=\left(p_{1},\dots,p_{n}\right)$
is a vector of length $n$ of positive integers, we let $\left[p_{1},\dots,p_{n}\right]$
to be the set $\left[p_{1}\right]^{\underline{1}}\cup\cdots\cup\left[p_{n}\right]^{\underline{n}}$.
For example, $\left[3,5,2\right]$ is the set $\left\{ 1^{^{\underline{1}}},2^{^{\underline{1}}},3^{^{\underline{1}}},1^{^{\underline{2}}},2^{^{\underline{2}}},3^{^{\underline{2}}},4^{^{\underline{2}}},5^{^{\underline{2}}},1^{^{\underline{3}}},2^{^{\underline{3}}}\right\} $.
Furthermore, if $p_{1}+\cdots+p_{n}$ is even, then the set of all
pairings of $\left[p_{1},\dots,p_{n}\right]$ is denoted as $\P_{p_{1},\dots,p_{n}}$.
Now, if $\mu$ is a pairing of $\left[p_{1},\dots,p_{n}\right]$,
then a pair $\left\{ x^{\underline{i}},y^{\underline{k}}\right\} $
in $\mu$ is a \emph{mixed pair} if $i\neq k$, and a \emph{non-mixed
pair} otherwise. To describe the number of mixed and non-mixed pairs
in a pairing $\mu$, we introduce the parameters $\mathbf{q}$ and
$\mathbf{s}$. Let $\mathbf{q}=\left(q_{1},\dots,q_{n}\right)$ be
a vector of length $n$, and $\mathbf{s}=\left(s_{1,2},s_{1,3},\dots,s_{n-1,n}\right)$
be an $n\times n$ strictly upper triangular matrix, where for ease
of notation we let $s_{i,k}=s_{k,i}$ for $i>k$ and $s_{i}=\sum_{k\neq i}s_{i,k}$.
If $p_{i}=2q_{i}+s_{i}$ is positive for $1\le i\le n$, we define
$\P_{n}^{\left(\mathbf{q};\mathbf{s}\right)}\subseteq\P_{p_{1},\dots,p_{n}}$
to be the subset of the pairing such that for $\mu\in\P_{n}^{\left(\mathbf{q};\mathbf{s}\right)}$,
$\mu$ has $q_{i}$ non-mixed pairs of the form $\left\{ x^{\underline{i}},y^{\underline{i}}\right\} $
and $s_{i,k}$ mixed pairs of the form $\left\{ x^{\underline{i}},y^{\underline{k}}\right\} $.
When convenient, we will sometimes treat $\mathbf{s}$ as a vector
of length $\frac{n\left(n-1\right)}{2}$. Furthermore, the \emph{support
graph} of $\mathbf{s}$ is the graph $G$ with the vertex set $\left[n\right]$,
such that $\left\{ i,k\right\} $ is an edge of $G$ if and only if
$s_{i,k}>0$.

Let $\gamma_{p_{1},\dots,p_{n}}$ be the canonical cycle permutation
of $\P_{p_{1},\dots,p_{n}}$, given by $\gamma_{p_{1},\dots,p_{n}}=\left(1^{^{\underline{1}}},\dots,p_{1}^{^{\underline{1}}}\right)\cdots\left(1^{^{\underline{n}}},\dots,p_{n}^{^{\underline{n}}}\right)$.
For $L\ge1$, we define $\A_{n,L}^{\left(\mathbf{q};\mathbf{s}\right)}\subseteq\P_{n}^{\left(\mathbf{q};\mathbf{s}\right)}$
to be the subset of pairings such that for $\mu\in\A_{n,L}^{\left(\mathbf{q};\mathbf{s}\right)}$,
$\mu\gamma_{p_{1},\dots,p_{n}}^{-1}$ has exactly $L$ cycles, and
let $a_{n,L}^{\left(\mathbf{q};\mathbf{s}\right)}=\left|\A_{n,L}^{\left(\mathbf{q};\mathbf{s}\right)}\right|$.
Our result can be stated as follows
\begin{thm}
\label{thm:main formula} Let $n\ge1$, $\mathbf{q}=\left(q_{1},\dots,q_{n}\right)$
and $\mathbf{s}=\left(s_{1,2},s_{1,3},\dots,s_{n-1,n}\right)$ be
vectors of non-negative integers, and suppose that the support graph
$G$ of $\mathbf{s}$ is a tree with edges $e_{1},\dots,e_{n-1}$.
Then, the generating series $A_{n}^{\left(\mathbf{q};\mathbf{s}\right)}\left(x\right)=\sum_{L\ge1}a_{n,L}^{\left(\mathbf{q};\mathbf{s}\right)}x^{L}$
satisfies
\begin{eqnarray*}
A_{n}^{\left(\mathbf{q};\mathbf{s}\right)}\left(K\right) & = & \sum_{\mathbf{t}=\mathbf{0}}^{\mathbf{q}}\prod_{i=1}^{n}\frac{\left(2q_{i}+s_{i}\right)!}{2^{t_{i}}t_{i}!\left(s_{i}+q_{i}-t_{i}\right)!}\cdot v_{n,K;\mathbf{q-t+1}}^{\left(\mathbf{s}\right)}
\end{eqnarray*}
for all $K\ge1$, where 
\begin{eqnarray*}
v_{n,K;\mathbf{R}}^{\left(\mathbf{s}\right)} & = & \sum_{A_{e_{1}}=0}^{\min\left(s_{e_{1}},K\right)-1}\cdots\sum_{A_{e_{n-1}}=0}^{\min\left(s_{e_{n-1}},K\right)-1}\left[\prod_{j=1}^{n-1}\frac{\left(K-A_{e_{j}}-1\right)!}{\left(K+s_{e_{j}}-A_{e_{j}}-1\right)!}\times\right.\\
 &  & \left.\prod_{i=1}^{n}\frac{\left(K+\sum_{k\sim i}\left(s_{i,k}-A_{i,k}-1\right)\right)!\left(R_{i}-1+\sum_{k\sim i}s_{i,k}\right)!}{\left(R_{i}-1\right)!\left(K-R_{i}-\sum_{k\sim i}A_{i,k}\right)!\left(R_{i}+\sum_{k\sim i}\left(s_{i,k}-1\right)\right)!}\right]
\end{eqnarray*}
Furthermore, for fixed $n$, $\mathbf{q}$, and $\mathbf{s}$, this
expression can be written as a polynomial in $K$.
\end{thm}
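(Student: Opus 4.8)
The plan is to derive polynomiality from the finiteness of the underlying combinatorial object together with the rational shape of the closed formula; granting the first part of the theorem, the two facts fit together at once. To start, observe that $A_{n}^{\left(\mathbf{q};\mathbf{s}\right)}\left(x\right)$ is genuinely a polynomial: $\P_{n}^{\left(\mathbf{q};\mathbf{s}\right)}$ is a set of pairings of the fixed finite set $\left[p_{1},\dots,p_{n}\right]$, hence finite, and the $\A_{n,L}^{\left(\mathbf{q};\mathbf{s}\right)}$ are pairwise disjoint over $L$, so $a_{n,L}^{\left(\mathbf{q};\mathbf{s}\right)}=0$ for all but finitely many $L$. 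Concretely, for $\mu\in\A_{n,L}^{\left(\mathbf{q};\mathbf{s}\right)}$ the permutation $\mu\gamma_{p_{1},\dots,p_{n}}^{-1}$ is obtained from $\gamma_{p_{1},\dots,p_{n}}^{-1}$, which has $n$ cycles, by left-multiplying by the $\tfrac12\sum_{i}p_{i}=\sum_{i}q_{i}+\sum_{j}s_{e_{j}}$ transpositions constituting $\mu$, and each such left-multiplication changes the number of cycles by $1$; hence $L\le n+\sum_{i}q_{i}+\sum_{j}s_{e_{j}}$, and $A_{n}^{\left(\mathbf{q};\mathbf{s}\right)}\left(x\right)=\sum_{L\ge1}a_{n,L}^{\left(\mathbf{q};\mathbf{s}\right)}x^{L}$ is a polynomial of degree at most that bound.

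Next I would analyze the right-hand side for large $K$. Choose a constant $K_{0}\ge\max_{j}s_{e_{j}}$, depending only on $n$, $\mathbf{q}$ and $\mathbf{s}$, large enough that for every integer $K\ge K_{0}$ all factorials occurring below have non-negative argument. For such $K$ each $\min\left(s_{e_{j}},K\right)$ equals $s_{e_{j}}$, so every inner sum in $v_{n,K;\mathbf{R}}^{\left(\mathbf{s}\right)}$ runs over the fixed interval $\left\{ 0,\dots,s_{e_{j}}-1\right\}$, and the outer sum over $\mathbf{t}$ is in any case finite and free of $K$. Fix such a term, that is, fix $\mathbf{t}$ and all the $A_{e_{j}}$. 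Each factorial in the summand has argument either independent of $K$ (the factors $\left(R_{i}-1+\sum_{k\sim i}s_{i,k}\right)!$, $\left(R_{i}-1\right)!$ and $\left(R_{i}+\sum_{k\sim i}\left(s_{i,k}-1\right)\right)!$) or of the form $K+c$ with $c$ an integer constant, namely $c=\sum_{k\sim i}\left(s_{i,k}-A_{i,k}-1\right)$, or $c=-R_{i}-\sum_{k\sim i}A_{i,k}$, or $c=-A_{e_{j}}-1$, or $c=s_{e_{j}}-A_{e_{j}}-1$. Hence every ratio $\left(K+c\right)!/\left(K+c'\right)!$ equals a product of $\left|c-c'\right|$ consecutive integers, a polynomial in $K$ when $c\ge c'$ and the reciprocal of one when $c<c'$; in particular the edge factors contribute $1/\prod_{l=0}^{s_{e_{j}}-1}\left(K-A_{e_{j}}+l\right)$ and the vertex factors contribute polynomials in $K$. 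Multiplying these finitely many rational functions and summing over $\mathbf{t}$ and the $A_{e_{j}}$ shows that for all integers $K\ge K_{0}$ the entire right-hand side equals $R\left(K\right)$ for one fixed rational function $R\left(x\right)=P\left(x\right)/Q\left(x\right)\in\mathbb{Q}\left(x\right)$, with $Q$ a product of the linear factors above.

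It remains to combine the two points. By the first part of the theorem, $A_{n}^{\left(\mathbf{q};\mathbf{s}\right)}\left(K\right)=R\left(K\right)$ for every integer $K\ge K_{0}$, hence at infinitely many integers; as $Q$ has only finitely many zeros, the identity $A_{n}^{\left(\mathbf{q};\mathbf{s}\right)}\left(K\right)Q\left(K\right)=P\left(K\right)$ holds at infinitely many integers and therefore identically in $\mathbb{Q}\left[x\right]$. Thus $Q\mid P$ and $R\left(x\right)=A_{n}^{\left(\mathbf{q};\mathbf{s}\right)}\left(x\right)$ as rational functions: for $K\ge K_{0}$ the closed formula literally evaluates the polynomial $A_{n}^{\left(\mathbf{q};\mathbf{s}\right)}\left(x\right)$ at $K$, and by the first part that same polynomial reproduces the formula at every integer $K\ge1$, which is the claim. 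The step needing care is the one just carried out, keeping track of which factorials sit in the numerator and which in the denominator so as to recognize each summand as a genuine rational function of $K$; the only real subtlety is interpretational, since for small $K$ the occurrences of $\min$ make the summation ranges move with $K$ and the truncated sum is not literally a rational function there, so the honest reading is that a single polynomial, namely $A_{n}^{\left(\mathbf{q};\mathbf{s}\right)}\left(x\right)$ of degree at most $n+\sum_{i}q_{i}+\sum_{j}s_{e_{j}}$, agrees with the formula at all $K\ge1$ and equals it identically, as rational functions, once $K\ge K_{0}$.

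If a proof independent of the first part is wanted, polynomiality of $v_{n,K;\mathbf{R}}^{\left(\mathbf{s}\right)}$ itself can be established directly by peeling leaves of the tree. For a leaf $\ell$ with unique neighbor $m$ and edge $e=\left\{ \ell,m\right\}$, the edge factor $\left(K-A_{\ell,m}-1\right)!/\left(K+s_{\ell,m}-A_{\ell,m}-1\right)!$ cancels the factorial $\left(K+s_{\ell,m}-A_{\ell,m}-1\right)!$ in the vertex factor at $\ell$ (which has $m$ as its only neighbor), leaving $\left(K-A_{\ell,m}-1\right)!/\left(K-R_{\ell}-A_{\ell,m}\right)!$ times factors free of $K$, a polynomial in $K$ and $A_{\ell,m}$; summing this against the remaining dependence on $A_{\ell,m}$, which occurs only in the vertex factor at $m$ and is again polynomial in $K$ and $A_{\ell,m}$, over the fixed range $\left\{ 0,\dots,s_{\ell,m}-1\right\}$ yields a polynomial in $K$ by the usual power-sum formulas and deletes $\ell$ from the tree. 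Iterating shows $v_{n,K;\mathbf{R}}^{\left(\mathbf{s}\right)}$ has no poles and hence is a polynomial. Here the main obstacle is the bookkeeping: one must formulate an induction hypothesis broad enough to absorb the extra polynomial factors created at each step and the way the vertex factor at $m$ changes once $\ell$ is deleted.
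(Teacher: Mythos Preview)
Your proposal addresses only the ``Furthermore'' polynomiality clause, explicitly granting the formula itself. Since deriving that formula is the entire substance of the theorem---the paper spends four sections building paired arrays, arrowed arrays, the substructure hierarchy $\Gamma\to\Delta\to\Lambda$, and a leaf-by-leaf decomposition of tree-shaped vertical arrays to obtain it---what you have written is not a proof of the statement but a proof of its last sentence conditional on the rest.

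That said, for the polynomiality clause your first argument is correct and takes a genuinely different route from the paper. The paper proves polynomiality constructively (its Corollary on rewriting $v_{n,K;\mathbf{R}}^{(\mathbf{s})}$): it assigns each edge $e_{j}$ to one of its endpoints $j$, regroups the factorials so that the edge ratio $(K-A_{e_{j}}-1)!/(K+s_{e_{j}}-A_{e_{j}}-1)!$ combines with pieces of the adjacent vertex factors into genuine rising factorials in $K$, and then checks separately that the terms with $A_{e_{j}}\ge K$ vanish so the bounds $\min(s_{e_{j}},K)-1$ can be replaced by $s_{e_{j}}-1$. This yields an explicit polynomial expression with $K$-free summation ranges. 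Your argument is instead indirect: the left side is a polynomial a priori because $\P_{n}^{(\mathbf{q};\mathbf{s})}$ is finite, the right side is a fixed rational function once $K\ge K_{0}$, and agreement at infinitely many integers forces equality as rational functions. This is shorter and avoids the regrouping trick, but gives no explicit polynomial form and is logically downstream of the formula you granted.

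Your closing leaf-peeling sketch is closer in spirit to the paper's method---both exploit the tree structure to pair edge factors with vertex factors---but the paper does the pairing globally via the edge-to-vertex assignment rather than inductively, precisely to avoid the bookkeeping problem you flag: after one peel the residual expression no longer has the original shape, so a naive induction does not close.
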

In this expression, the sum $\sum_{k\sim i}$ is over all indices
$k$ that are adjacent to $i$ in the support graph of $\mathbf{s}$.
Furthermore, for each edge $e_{j}=\left\{ i,k\right\} $, the summation
variable $A_{e_{j}}$ is equivalently written as $A_{i,k}$ and $A_{k,i}$
in parts of the expression. Now, the fact that this expression can
be written as a polynomial in $K$ for fixed parameters $n$, $\mathbf{q}$,
and $\mathbf{s}$ means that we can substitute $K=x$ into our expression
for $A_{n}^{\left(\mathbf{q};\mathbf{s}\right)}\left(K\right)$ to
obtain $A_{n}^{\left(\mathbf{q};\mathbf{s}\right)}\left(x\right)$.

In the language of enumerating maps, this generating series counts
the number of combinatorial maps with $n$ vertices and $L$ faces,
such that there are $q_{i}$ loop edges incident to vertex $i$, and
$s_{i,j}$ edges between vertices $i$ and $j$. Furthermore, the
combinatorial maps counted in this series are connected if and only
if the support graph of $\mathbf{s}$ is connected. A survey on the
relationship between maps and the products of permutations can be
found in \cite{Lando-Zvonkin:2004}.

This theorem generalizes a number of theorems already existing in
the literature. In particular, the $n=1$ case of our theorem is the
Harer-Zagier formula for computing the Euler characteristic of the
moduli space of curves, which can be written as follows.
\begin{thm}
(Harer-Zagier \cite{Harer-Zagier:1986}) \label{thm:Harer-Zagier}
Let $q$ be a positive integer, and $\A_{L}^{\left(q\right)}$ be
the subset of pairings of $\P_{2q}$ such that for $\mu\in\A_{L}^{\left(q\right)}$,
$\mu\gamma_{2q}^{-1}$ has exactly $L$ cycles. If we let $a_{L}^{\left(q\right)}=\left|\A_{L}^{\left(q\right)}\right|$,
then the generating series for $a_{L}^{\left(q\right)}$ is given
by 
\[
A^{\left(q\right)}\left(x\right)=\left(2q-1\right)!!\sum_{k\ge1}2^{k-1}\binom{q}{k-1}\binom{x}{k}
\]
\end{thm}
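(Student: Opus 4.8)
The plan is to derive \ref{thm:Harer-Zagier} as the $n=1$ specialization of \ref{thm:main formula}. When $n=1$ there are no off-diagonal entries, so $\mathbf{s}$ is the empty vector and its support graph is the single vertex $\left\{ 1\right\} $, which is vacuously a tree with $n-1=0$ edges; the hypotheses of \ref{thm:main formula} are thus satisfied. Writing $q_{1}=q$ and $s_{1}=0$ gives $p_{1}=2q_{1}+s_{1}=2q$, so that $\gamma_{p_{1}}=\gamma_{2q}$ and every pairing of $\left[p_{1}\right]^{\underline{1}}$ is non-mixed; hence $\P_{1}^{\left(q;\mathbf{s}\right)}=\P_{2q}$. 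Comparing definitions, $\A_{L}^{\left(q\right)}=\A_{1,L}^{\left(q;\mathbf{s}\right)}$ and $a_{L}^{\left(q\right)}=a_{1,L}^{\left(q;\mathbf{s}\right)}$, so $A^{\left(q\right)}\left(x\right)=A_{1}^{\left(q;\mathbf{s}\right)}\left(x\right)$. It therefore suffices to evaluate the right-hand side of \ref{thm:main formula} at $n=1$ and recast it in the claimed form.

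First I would simplify the inner quantity $v_{1,K;R}^{\left(\mathbf{s}\right)}$ with $\mathbf{s}$ empty. Since $n-1=0$, the product $\prod_{j=1}^{n-1}$ and the summations over $A_{e_{j}}$ are empty, while in the vertex product the single vertex $i=1$ has no neighbours, so every sum $\sum_{k\sim 1}$ vanishes. The lone vertex factor then collapses to
\[
v_{1,K;R}^{\left(\mathbf{s}\right)}=\frac{K!\left(R-1\right)!}{\left(R-1\right)!\left(K-R\right)!\,R!}=\binom{K}{R},
\]
interpreted as the polynomial $\binom{x}{R}$ in accordance with the polynomiality assertion, which in particular gives the correct value $0$ when $0\le K<R$.

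Next I would substitute this into the $n=1$ instance of the main formula, obtaining
\[
A^{\left(q\right)}\left(K\right)=\sum_{t=0}^{q}\frac{\left(2q\right)!}{2^{t}\,t!\left(q-t\right)!}\binom{K}{q-t+1}.
\]
Reindexing by $k=q-t+1$ turns the range $0\le t\le q$ into $1\le k\le q+1$, which matches the range $k\ge1$ in the statement since $\binom{q}{k-1}=0$ for $k>q+1$. Using $\left(2q-1\right)!!=\left(2q\right)!/\left(2^{q}q!\right)$ together with $\binom{q}{k-1}=q!/\bigl(\left(k-1\right)!\left(q-k+1\right)!\bigr)$, a direct comparison of coefficients shows
\[
\frac{\left(2q\right)!}{2^{q-k+1}\left(q-k+1\right)!\left(k-1\right)!}=\left(2q-1\right)!!\,2^{k-1}\binom{q}{k-1},
\]
whence $A^{\left(q\right)}\left(K\right)=\left(2q-1\right)!!\sum_{k\ge1}2^{k-1}\binom{q}{k-1}\binom{K}{k}$, and replacing $K$ by $x$ gives the claim.

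The only real care needed is bookkeeping: one must verify that the empty-product and empty-sum conventions in the definition of $v$ genuinely reduce the vertex factor to a single binomial coefficient, and that the reindexing $k=q-t+1$ paired with the double-factorial rewriting reproduces both the summation range and the coefficients exactly. These steps are routine, so the substance of the proof is simply the observation that the intricate tree formula degenerates precisely to the classical Harer--Zagier expression when there is a single vertex.
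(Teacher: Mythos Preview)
Your proposal is correct and follows exactly the approach the paper indicates: specialize \ref{thm:main formula} to $n=1$ with $\mathbf{s}$ empty, compute $v_{1,K;R}^{(\mathbf{s})}=\binom{K}{R}$, and reindex the sum. Your substitution $k=q-t+1$ is the right one (the paper's remark ``$t=q-k-1$'' appears to be a typographical slip for $t=q-k+1$).
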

There are numerous proofs of this formula in the literature, both
algebraic and combinatorial. A selection of the proofs can be found
in the papers by Goulden and Nica \cite{Goulden-Nica:2005}, Itzykson
and Zuber \cite{Itzykson-Zuber:1990}, Jackson \cite{Jackson:1994},
Kerov \cite{Kerov:1999}, Kontsevich \cite{Kontsevich:1992}, Lass
\cite{Lass:2001}, Penner \cite{Penner:1988}, and Zagier \cite{Zagier:1995}.
The original proof of Harer-Zagier uses matrix integration, and there
are numerous other algebraic proofs for this same result. Some subsequent
proofs used purely combinatorial approaches, such as the use of Eulerian
tours by Lass, and the use of trees by Goulden and Nica. To reduce
\ref{thm:main formula} to the Harer-Zagier formula, we can simply
take $q_{1}=q$, $\mathbf{s}$ to be empty, and then reversing the
sum with $t=q-k-1$.

The $n=2$ case of our theorem was proved by Goulden and Slofstra
\cite{Goulden-Slofstra:2010} using a combinatorial technique that
we will extend in this paper.
\begin{thm}
(Goulden-Slofstra \cite{Goulden-Slofstra:2010}) \label{thm:Goulden-Slofstra}
Let $q_{1}$ and $q_{2}$ be non-negative integers, and $s$ be a
positive integer. Let $\A_{L}^{\left(q_{1},q_{2};s\right)}$ be the
subset of pairings of $\P^{\left(q_{1},q_{2};s\right)}$ such that
for $\mu\in\A_{L}^{\left(q_{1},q_{2};s\right)}$, $\mu\gamma_{2q_{1}+s,2q_{2}+s}^{-1}$
has exactly $L$ cycles. If we let $a_{L}^{\left(q_{1},q_{2};s\right)}=\left|\A_{L}^{\left(q_{1},q_{2};s\right)}\right|$,
then the generating series for $a_{L}^{\left(q_{1},q_{2};s\right)}$
is given by 
\[
A^{\left(q_{1},q_{2};s\right)}\left(x\right)=p_{1}!p_{2}!\sum_{k=1}^{d+1}\sum_{i=0}^{\left\lfloor \frac{1}{2}p_{1}\right\rfloor }\sum_{j=0}^{\left\lfloor \frac{1}{2}p_{2}\right\rfloor }\frac{1}{2^{i+j}i!j!\left(d-i-j\right)!}\binom{x}{k}\binom{d-i-j}{k-1}\Delta_{k}^{\left(q_{1},q_{2};s\right)}
\]
where $p_{1}=2q_{1}+s$, $p_{2}=2q_{2}+s$, $d=q_{1}+q_{2}+s$, and
\[
\Delta_{k}^{\left(q_{1},q_{2};s\right)}=\binom{k-1}{q_{1}-i}\binom{k-1}{q_{2}-j}-\binom{k-1}{q_{1}+s-i}\binom{k-1}{q_{2}+s-j}
\]
\end{thm}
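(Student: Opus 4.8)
The plan is to derive \ref{thm:Goulden-Slofstra} as the $n=2$ instance of \ref{thm:main formula}, turning it into a purely algebraic reduction. When $n=2$ the support graph is the single edge $e_1=\{1,2\}$, so $s_{1,2}=s$ and $s_1=s_2=s$, there is one summation variable $A:=A_{e_1}$ running from $0$ to $\min(s,K)-1$, and each vertex has a unique neighbour. First I would specialize $v_{2,K;(R_1,R_2)}^{(s)}$ and simplify: in each vertex factor the numerator $(R_i-1+s)!$ cancels the denominator $(R_i+s-1)!$, and the two surplus factors $(K+s-A-1)!$ coming from the vertices absorb the single edge factor $(K-A-1)!/(K+s-A-1)!$. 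This collapses $v$ to
\[
v_{2,K;(R_1,R_2)}^{(s)}=\sum_{A=0}^{\min(s,K)-1}\frac{(K-A-1)!\,(K+s-A-1)!}{(R_1-1)!\,(R_2-1)!\,(K-R_1-A)!\,(K-R_2-A)!}.
\]
Writing $R_1=q_1-t_1+1$, $R_2=q_2-t_2+1$ and setting $a=q_1-t_1$, $b=q_2-t_2$, each summand becomes a product of falling factorials, so $v$ is a constant multiple of the windowed binomial sum $\sum_A\binom{K-A-1}{a}\binom{K+s-A-1}{s+b}$.

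Next I would reconcile the summation ranges. In \ref{thm:main formula} the outer indices satisfy $t_1\le q_1$, $t_2\le q_2$, whereas \ref{thm:Goulden-Slofstra} runs $i$ and $j$ up to $\lfloor p_1/2\rfloor$ and $\lfloor p_2/2\rfloor$. I claim the extra terms vanish: if $i>q_1$ then the first product of $\Delta_k$ dies because $\binom{k-1}{q_1-i}=0$, while the factor $\binom{d-i-j}{k-1}$ forces $k-1\le d-i-j<q_2+s-j$, which kills $\binom{k-1}{q_2+s-j}$ in the second product; the symmetric statement handles $j>q_2$. Hence the Goulden--Slofstra sum may be truncated to $i\le q_1$, $j\le q_2$, and I would identify $(i,j)=(t_1,t_2)$. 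Since the prefactor $p_1!p_2!/(2^{i+j}i!j!)$ already matches, the whole theorem reduces to a single core identity, one for each $(i,j)$: with $a=q_1-i$, $b=q_2-j$ and $d-i-j=a+b+s$,
\[
\frac{1}{(s+a)!\,b!}\sum_{A=0}^{\min(s,K)-1}\binom{K-A-1}{a}\binom{K+s-A-1}{s+b}=\frac{1}{(a+b+s)!}\sum_{k=1}^{d+1}\binom{K}{k}\binom{a+b+s}{k-1}\Delta_k^{(q_1,q_2;s)}.
\]
Both sides are polynomials in $K$ (polynomiality on the left is part of \ref{thm:main formula}), so it suffices to prove the identity for all large positive integers $K$, where every factorial counts something.

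The heart of the argument, and the step I expect to be the main obstacle, is this core identity. The plan is to substitute $M=K-A$, turning the left side into a sum of $\binom{M-1}{a}\binom{M+s-1}{s+b}$ over the length-$s$ window $K-s+1\le M\le K$, and to expand this windowed sum in the basis $\binom{K}{k}$. Writing the window as a difference of two half-infinite sums and evaluating each by Chu--Vandermonde should produce the factor $\binom{a+b+s}{k-1}$ together with the first product $\binom{k-1}{a}\binom{k-1}{b}$ of $\Delta_k$; the subtracted sum, reindexed by the shift $M\mapsto M-s$, should reproduce the second product $\binom{k-1}{a+s}\binom{k-1}{b+s}$ with the opposite sign, which is exactly the reflection responsible for the $\Delta_k$ difference. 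The delicate points are tracking the shift of both parameters by $s$, verifying the sign and the normalising factorials, and confirming that the apparent leading term at $k=d+1$ contributes zero, so that $A^{(q_1,q_2;s)}(x)$ has degree $d$, consistent with the planar bound $L\le d$ on the number of faces. Once the coefficient of each $\binom{K}{k}$ is matched, restoring $K=x$ by polynomiality yields the stated generating series, exactly as the analogous reversal $t=q-k-1$ recovers \ref{thm:Harer-Zagier} in the $n=1$ case.
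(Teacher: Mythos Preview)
Your route differs from the paper's. The paper does not give a self-contained proof of \ref{thm:Goulden-Slofstra} or carry out a direct algebraic reduction from \ref{thm:main formula}; it states \ref{thm:Goulden-Slofstra} as a known result and remarks that to see it as the $n=2$ case one should return to the combinatorics, compare the two papers' definitions of vertical arrays (Goulden--Slofstra impose a non-empty condition on columns, this paper does not), relate their cardinalities by inclusion--exclusion, and only then manipulate algebraically, with all details deferred to \cite{ChanThesis:2016}. You instead attempt a purely algebraic derivation by specializing \ref{thm:main formula} and matching formulas. This is more self-contained and avoids revisiting the bijections; the price is that everything rests on the single binomial identity you isolate.

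Your preliminary steps are correct: the collapse of $v_{2,K;(R_1,R_2)}^{(s)}$ is right, and the range-truncation argument (if $i>q_1$ then $\binom{k-1}{q_1-i}=0$, while $\binom{d-i-j}{k-1}\neq 0$ forces $k-1<q_2+s-j$, killing the second product) is valid. The core identity itself, however, is only sketched, and one step deserves scrutiny. After the shift $M\mapsto M-s$ the subtracted half-sum has summand $\binom{M-s-1}{a}\binom{M-1}{s+b}$, which is asymmetric in the roles of $a$ and $b$; your claim that its expansion in the basis $\binom{K}{k}$ produces the symmetric product $\binom{k-1}{a+s}\binom{k-1}{b+s}$ therefore requires a further Vandermonde-type rearrangement that you have not spelled out. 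The identity is correct (small cases check), so your plan can be completed, but this is precisely the ``lengthy'' algebra the paper elected to offload to the thesis rather than perform inline.
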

In this expression, $p_{1}$ and $p_{2}$ are the degrees of vertices
1 and 2, respectively, and $d$ is the total number of pairs in the
pairing.

Unlike the $n=1$ case, the most direct way to show that \ref{thm:main formula}
can be reduced to \ref{thm:Goulden-Slofstra} is to delve into the
combinatorial proof itself. By noting the differences in the definitions
of vertical arrays between Goulden and Slofstra and our subsequent
definitions, we can relate their cardinalities using inclusion-exclusion.
Further algebraic manipulations then shows that the two formulas are
equivalent. As the proof is rather lengthy, readers interested in
the proof can consult \cite{ChanThesis:2016}.

\section{\label{sec:Paired Arrays}Paired Functions and Paired Arrays}

For proving our main theorem, we will use a combinatorial object called
\emph{paired functions}, which are related to the paired surjections
introduced in Goulden and Slofstra \cite{Goulden-Slofstra:2010}.
The difference between the two objects is that we reject the non-empty
condition here, which makes our object equivalent to the $K$-colouring
cycles used in some of the algebraic techniques in \cite{Lando-Zvonkin:2004}.
This brings together the algebraic and combinatorial techniques, as
they effectively count the same set of objects.
\begin{defn}
\label{def:Paired Functions}Let $n,K\ge1$, $\mathbf{q}=\left(q_{1},\dots,q_{n}\right)\ge\mathbf{0}$,
$\mathbf{s}=\left(s_{1,2},s_{1,3},\dots,s_{n-1,n}\right)\ge\mathbf{0}$,
and $p_{i}=2q_{i}+\sum_{k\neq i}s_{k,i}$ for $1\le i\le n$. An ordered
pair $\left(\mu,\pi\right)$ is a \emph{paired function} if $\mu\in\P_{n}^{\left(\mathbf{q};\mathbf{s}\right)}$
and $\pi\colon\left[p_{1},\dots,p_{n}\right]\rightarrow\left[K\right]$
is a function satisfying 
\[
\pi\left(\mu\left(v\right)\right)=\pi\left(\gamma_{p_{1},p_{2},\dots,p_{n}}\left(v\right)\right)\quad\mbox{ for all }v\in\left[p_{1},\dots,p_{n}\right]
\]
We denote the set of paired functions satisfying the parameters $n$,
$K$, $\mathbf{q}$, and $\mathbf{s}$ as $\F_{n,K}^{\left(\mathbf{q};\mathbf{s}\right)}$,
and we let $f_{n,K}^{\left(\mathbf{q};\mathbf{s}\right)}=\left|\F_{n,K}^{\left(\mathbf{q};\mathbf{s}\right)}\right|$.
\end{defn}
By substituting in $u=\gamma_{p_{1},p_{2},\dots,p_{n}}\left(v\right)$,
we have $\pi\left(u\right)=\pi\left(\mu\gamma_{p_{1},p_{2},\dots,p_{n}}^{-1}\left(u\right)\right)$
for all $u\in\left[p_{1},\dots,p_{n}\right]$. This implies that the
cycles of $\mu\gamma_{p_{1},p_{2},\dots,p_{n}}^{-1}$ are preserved
by $\pi$. In other words, each of the cycles of $\mu\gamma_{p_{1},p_{2},\dots,p_{n}}^{-1}$
is coloured with one of $K$ colours. Hence, for any given pairing
$\mu\in\A_{n,L}^{\left(\mathbf{q};\mathbf{s}\right)}$, there are
$K^{L}$ functions $\pi\colon\left[p_{1},\dots,p_{n}\right]\rightarrow\left[K\right]$
such that $\left(\mu,\pi\right)$ is a paired function. Furthermore,
by applying the definition to all pairs $\left\{ x^{\underline{i}},y^{\underline{k}}\right\} $
of $\mu$, we have that $\left(\mu,\pi\right)$ is a paired function
if and only if 
\begin{eqnarray}
\left(\pi\left(\mu\left(y^{\underline{k}}\right)\right),\pi\left(\gamma_{p_{1},p_{2},\dots,p_{n}}\left(x^{\underline{i}}\right)\right)\right) & = & \left(\pi\left(\gamma_{p_{1},p_{2},\dots,p_{n}}\left(y^{\underline{k}}\right)\right),\pi\left(\mu\left(x^{\underline{i}}\right)\right)\right)\nonumber \\
\left(\pi\left(x^{\underline{i}}\right),\pi\left(\left(x+1\right)^{\underline{i}}\right)\right) & = & \left(\pi\left(\left(y+1\right)^{\underline{k}}\right),\pi\left(y^{\underline{k}}\right)\right)\label{eq:Algebraic Relation}
\end{eqnarray}
holds for all pairs $\left\{ x^{\underline{i}},y^{\underline{k}}\right\} $
of $\mu$, where addition is done modulo $p_{i}$ and $p_{k}$ on
the left and right hand side, respectively.

Recall that $a_{n,L}^{\left(\mathbf{q};\mathbf{s}\right)}$ is the
number of pairings $\mu\in\P_{n}^{\left(\mathbf{q};\mathbf{s}\right)}$
such that $\mu\gamma_{p_{1},p_{2},\dots,p_{n}}^{-1}$ has exactly
$L$ cycles. Hence, for each pairing $\mu$, there are $K^{L}$ functions
$\pi$ such that $\left(\mu,\pi\right)$ is a paired function. This
gives us 
\begin{equation}
A_{n}^{\left(\mathbf{q};\mathbf{s}\right)}\left(K\right)=\sum_{L\ge1}a_{n,L}^{\left(\mathbf{q};\mathbf{s}\right)}K^{L}=f_{n,K}^{\left(\mathbf{q};\mathbf{s}\right)}\label{eq:Genfunc Label Relation}
\end{equation}
 for $K\ge1$. Therefore, if we can find an expression for $f_{n,K}^{\left(\mathbf{q};\mathbf{s}\right)}$
that is a polynomial in $K$, we can substitute $K=x$ into that expression
to obtain $A_{n}^{\left(\mathbf{q};\mathbf{s}\right)}\left(x\right)$.

To represent paired functions, we use a graphical representation introduced
in Goulden and Slofstra, called the \emph{labelled array}. This is
an $n\times K$ array of cells arranged in a grid. Each element $x^{\underline{i}}$
of $\mu$ is represented as a vertex, where the vertex labelled $x^{\underline{i}}$
is placed into cell $\left(i,j\right)$ if $\pi\left(x^{\underline{i}}\right)=j$.
The vertices are arranged horizontally within a cell, in increasing
order of the labels. Furthermore, for each pair $\left\{ x^{\underline{i}},y^{\underline{k}}\right\} $
in $\mu$, an edge is drawn between their corresponding vertices.

For example, let $\left(\mu,\pi\right)\in\F_{3,4}^{\left(\mathbf{q};\mathbf{s}\right)}$,
where $\mathbf{q}=\left(2,2,3\right)$, and $\mathbf{s}=\left(1,3,1\right)$.
Suppose $\mu$ and $\pi$ are given by 
\begin{eqnarray*}
\mu & = & \left\{ \left\{ 1^{^{\underline{1}}},2^{^{\underline{1}}}\right\} ,\left\{ 3^{^{\underline{1}}},10^{^{\underline{3}}}\right\} ,\left\{ 4^{^{\underline{1}}},9^{^{\underline{3}}}\right\} ,\left\{ 5^{^{\underline{1}}},4^{^{\underline{2}}}\right\} ,\left\{ 6^{^{\underline{1}}},7^{^{\underline{1}}}\right\} ,\left\{ 8^{^{\underline{1}}},1{}^{^{\underline{3}}}\right\} \right.\\
 &  & \left.\,\,\:\left\{ 1^{^{\underline{2}}},6^{^{\underline{2}}}\right\} ,\left\{ 2^{^{\underline{2}}},5^{^{\underline{2}}}\right\} ,\left\{ 3^{^{\underline{2}}},8^{^{\underline{3}}}\right\} ,\left\{ 2^{^{\underline{3}}},3^{^{\underline{3}}}\right\} ,\left\{ 4^{^{\underline{3}}},7^{^{\underline{3}}}\right\} ,\left\{ 5^{^{\underline{3}}},6^{^{\underline{3}}}\right\} \right\} \\
\pi^{-1}\left(1\right) & = & \left\{ 5^{^{\underline{1}}},7^{^{\underline{1}}},3^{^{\underline{2}}},5^{^{\underline{2}}},3^{^{\underline{3}}},9^{^{\underline{3}}}\right\} \\
\pi^{-1}\left(2\right) & = & \left\{ 6^{^{\underline{1}}},8^{^{\underline{1}}},4^{^{\underline{2}}},2^{^{\underline{3}}},4^{^{\underline{3}}},8^{^{\underline{3}}}\right\} \\
\pi^{-1}\left(3\right) & = & \left\{ 1^{^{\underline{1}}},2^{^{\underline{1}}},3^{^{\underline{1}}},2^{^{\underline{2}}},6^{^{\underline{2}}},1^{^{\underline{3}}},6^{^{\underline{3}}}\right\} \\
\pi^{-1}\left(4\right) & = & \left\{ 4^{^{\underline{1}}},1^{^{\underline{2}}},5^{^{\underline{3}}},7^{^{\underline{3}}},10^{^{\underline{3}}}\right\} 
\end{eqnarray*}
Then, the labelled array representing $\left(\mu,\pi\right)$ is given
by \ref{fig:Labelled Array}.

\begin{figure}
\begin{centering}
\includegraphics{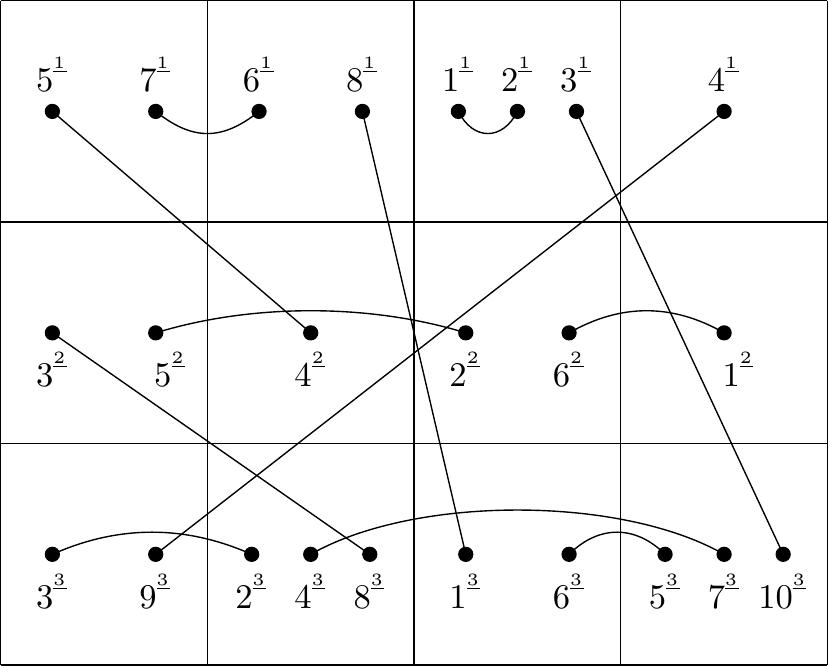}
\par\end{centering}
\caption{\label{fig:Labelled Array}A labelled array with 3 rows and 4 columns}
\end{figure}

Note that an $n\times K$ array with paired and labelled vertices
as described above uniquely represents a pairing $\mu\in\P_{n}^{\left(\mathbf{q};\mathbf{s}\right)}$
and a function $\pi\colon\left[p_{1},\dots,p_{n}\right]\rightarrow\left[K\right]$.
The condition $\pi\left(\mu\left(v\right)\right)=\pi\left(\gamma_{p_{1},p_{2},\dots,p_{n}}\left(v\right)\right)$
is fulfilled if and only if for every pair $\left\{ x^{\underline{i}},y^{\underline{k}}\right\} $
in the array, the vertex $\left(x+1\right)^{\underline{i}}$ is in
the same column as the vertex of $y^{\underline{k}}$, where the addition
$x+1$ is taken modulo $p_{i}$.

Next, we will show that this condition is sufficient to reconstruct
the array if the labels are removed and replaced by marked cells.
We do this by defining paired arrays as abstract combinatorial objects,
then creating a bijection between paired arrays and labelled arrays.
\begin{defn}
\label{def:Paired Array}Let $n,K\ge1$, $\mathbf{q}=\left(q_{1},\dots,q_{n}\right)\ge\mathbf{0}$,
$\mathbf{s}=\left(s_{1,2},s_{1,3},\dots,s_{n-1,n}\right)\ge\mathbf{0}$,
and $\mathbf{R}=\left(R_{1},\dots,R_{n}\right)\in\left[K\right]^{n}$.
We define $\PA{n}{K}{\mathbf{R}}{\mathbf{q}}{\mathbf{s}}$ to be the
set of \emph{paired arrays}, which are arrays of cells and vertices
subject to the following conditions.

\begin{itemize}
\item A paired array is an array of cells, arranged in $n$ rows and $K$
columns.
\item Each cell $\left(i,j\right)$ contains an ordered list of vertices,
arranged left to right, so that row $i$ contains $p_{i}\coloneqq2q_{i}+s_{i}=2q_{i}+\sum_{k<i}s_{k,i}+\sum_{k>i}s_{i,k}$
vertices in total.
\item Each vertex $u$ is paired with exactly one other vertex $v$, which
is called the \emph{partner} of $u$. Exactly $2q_{i}$ vertices of
row $i$ are paired with other vertices of row $i$, and for $i<k$,
exactly $s_{i,k}$ vertices of row $i$ are paired with vertices of
row $k$. Graphically, the pairings are denoted as edges between vertices.
\item Each row $i$ has exactly $R_{i}$ marked cells, which are denoted
by marking the cell with a box in its upper right corner.
\item A vertex $v$ is \emph{critical} if it is the rightmost vertex of
a cell, and the cell it belongs to is not marked. A pair $\left\{ u,v\right\} $
that contains a critical vertex is a \emph{critical pair}.
\item A pair of vertices $\left\{ u,v\right\} $ is a \emph{mixed pair}
if $u$ and $v$ belong to different rows. The vertices $u$ and $v$
are called \emph{mixed vertices}.
\item An \emph{object} of a paired array refers to either a vertex, or the
box used to indicate that a cell is marked. If a cell both contains
vertices and a box, the box is to be taken as the rightmost object
of the cell.
\end{itemize}
\end{defn}
Generally, we use $\alpha\in\PA{n}{K}{\mathbf{R}}{\mathbf{q}}{\mathbf{s}}$
to denote a paired array. Before introducing the conditions used in
Goulden and Slofstra, we will first introduce a number of useful notations
and conventions.
\begin{conv}
\label{conv:Array Convention}For notational convenience, we introduce
the following:

\begin{itemize}
\item We use calligraphic letters to denote columns or sets of columns.
For generic columns or sets of columns, we use the letters $\X$,
$\Y$, and $\Z$.
\item For each calligraphic letter, we use the corresponding upper case
letter to denote the number of columns in the set. For example, $X=\left|\X\right|$.
\item For each calligraphic letter, we use the corresponding lower case
letter, subscripted by the row number, to denote the total number
of vertices in those columns for a given row. For example, $x_{i}$
is the total number of vertices in row $i$ of the columns of $\X$.
\item We generally use $i,j,k,\ell$ as index variables, with $i$ and $k$
for rows, and $j$ and $\ell$ for columns. Furthermore, we use cell
$\left(i,j\right)$ to denote the cell in row $i$, column $j$ of
the array.
\item We use $\K$ to denote the set of all columns, and $K$ to denote
the total number of columns.
\item We use $\R_{i}$ to denote the set of columns that are marked in row
$i$, and $R_{i}$ to denote the number of columns that are marked
in row $i$.
\item We use $\F_{i}$ to denote the set of columns that have at least one
vertex in row $i$, and $F_{i}$ to denote the number of columns that
are marked in row $i$.
\item We use $w_{i,j}$ to denote the number of vertices in cell $\left(i,j\right)$,
and $\mathbf{w}$ to denote a matrix of $w_{i,j}$ describing the
number of vertices in each cell of row $i$.
\item We let $s_{i,k}=s_{k,i}$ for $i>k$, and $s_{i}=\sum_{k\neq i}s_{i,k}$
be the total number of mixed vertices of row $i$. This means that
row $i$ contains $p_{i}=2q_{i}+s_{i}$ vertices.
\end{itemize}
\end{conv}
With these conventions, we are ready to define the two conditions
that allow us to create a bijection between labelled arrays and paired
arrays.
\begin{defn}
\label{def:Paired Array Conditions}Let $\alpha\in\PA{n}{K}{\mathbf{R}}{\mathbf{q}}{\mathbf{s}}$
be a paired array.

\begin{itemize}
\item $\alpha$ is said to satisfy the \emph{balance condition} if for each
cell $\left(i,j\right)$, the number of mixed vertices in cell $\left(i,j\right)$
is equal to the number of mixed pairs $\left\{ u,v\right\} $ such
that $u$ is in row $i$ and $v$ is in column $j$ (but not row $i$).
\item For each row $i$, the \emph{forest condition function} $\psi_{i}\colon\F_{i}\backslash\R_{i}\mapsto\K$
is defined as follows: For each column $j\in\F_{i}\backslash\R_{i}$,
if the rightmost vertex $v$ is paired with a vertex $u$ in column
$\ell$, then $\psi_{i}\left(j\right)=\ell$. $\alpha$ is said to
satisfy the \emph{forest condition} if for each row $i$, the functional
digraph of $\psi_{i}$ on the vertex set $\F_{i}\cup\psi_{i}\left(\F_{i}\right)\cup\R_{i}$
is a forest with root vertices $\R_{i}$. That is, for each column
$j\in\F_{i}\backslash\R_{i}$, there exists some positive integer
$t$ such that $\psi_{i}^{t}\left(j\right)\in\R_{i}$. Note that we
always include $\R_{i}$ in the vertex set of the functional digraph
of $\psi_{i}$, regardless of whether they are in the domain or range
of $\psi_{i}$.
\end{itemize}
A paired array is \emph{proper} if it satisfies the balance and forest
conditions. A paired array is called a \emph{canonical array} if it
is proper and $\mathbf{R}=\mathbf{1}$. We denote the set of canonical
arrays as $\CA{n}{K}{\mathbf{q}}{\mathbf{s}}$, and we let $c_{n,K}^{\left(\mathbf{q};\mathbf{s}\right)}=\left|\CA{n}{K}{\mathbf{q}}{\mathbf{s}}\right|$.
A paired array is called a \emph{vertical array} if for every pair
$\left\{ u,v\right\} $, $u$ and $v$ are in different rows, and
is \emph{proper} if it satisfies the balance and forest conditions.
We denote the set of vertical arrays as $\VA{n}{K}{\mathbf{R}}{\mathbf{s}}=\PA{n}{K}{\mathbf{R}}{\mathbf{0}}{\mathbf{s}}$
and the set of proper vertical arrays as $\PVA{n}{K}{\mathbf{R}}{\mathbf{s}}$.
We also let $v_{n,K;\mathbf{R}}^{\left(\mathbf{s}\right)}=\left|\PVA{n}{K}{\mathbf{R}}{\mathbf{s}}\right|$.
For notational convenience, we extend our definition of $v_{n,K;\mathbf{R}}^{\left(\mathbf{s}\right)}$
to all $\mathbf{R}\ge\mathbf{1}$ by letting $v_{n,K;\mathbf{R}}^{\left(\mathbf{s}\right)}=0$
if $R_{i}>K$ for some $1\le i\le n$. Again, unlike in Goulden and
Slofstra, we do not have the non-empty condition in our definition
of the proper paired array.
\end{defn}
Note that we will generally not work directly with paired arrays that
do not satisfy the forest condition. However, as vertical arrays not
satisfying the forest condition are vital for extending paired arrays,
we have separated the forest condition from the definition of vertical
arrays itself. Of the two conditions in \ref{def:Paired Array Conditions},
the forest condition is more fundamental, and all the arrays we define
in this paper will satisfy some form of this condition. The balance
condition is in general difficult to handle, but can be radically
simplified if the support graph of $\mathbf{s}$ forms a tree. For
convenience, arrays that have such property are called \emph{tree-shaped}.
With tree-shaped arrays, we can reduce the balance condition to a
condition that only depends on the number of mixed vertices in a cell,
essentially allowing us to ignore it.
\begin{lem}
\label{lem:tree shape balance}Let $\alpha\in\PA{n}{K}{\mathbf{R}}{\mathbf{q}}{\mathbf{s}}$
be a tree-shaped paired array, and suppose that $s_{i,k,j}$ is the
number of vertices in cell $\left(i,j\right)$ that are paired with
a vertex in row $k$ for all $1\le i,k\le n$ and $1\le j\le K$.
Then, $\alpha$ satisfies the balance condition if and only if $s_{i,k,j}=s_{k,i,j}$
for all $i\neq k$.
\end{lem}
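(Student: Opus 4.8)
The plan is to reduce the balance condition to a single numerical identity in the cell-counts $s_{i,k,j}$ and then exploit the acyclicity of the support graph $G$ of $\mathbf{s}$. \textbf{Step 1: rewriting the balance condition.} Fix a cell $(i,j)$. By the very definition of $s_{i,k,j}$, the number of mixed vertices in cell $(i,j)$ is $\sum_{k\neq i}s_{i,k,j}$. On the other hand, a mixed pair $\{u,v\}$ with $u$ in row $i$ has its other endpoint $v$ in a unique row $k\neq i$, and asking in addition that $v$ lie in column $j$ is exactly asking that $v$ lie in cell $(k,j)$; for each fixed $k\neq i$ the number of such pairs is precisely $s_{k,i,j}$. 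Summing over $k$, the number of mixed pairs $\{u,v\}$ with $u$ in row $i$ and $v$ in column $j$ but not row $i$ equals $\sum_{k\neq i}s_{k,i,j}$. Hence $\alpha$ satisfies the balance condition if and only if $\sum_{k\neq i}s_{i,k,j}=\sum_{k\neq i}s_{k,i,j}$ for every cell $(i,j)$. Note also that if $\{i,k\}$ is not an edge of $G$ then $s_{i,k}=0$, so $s_{i,k,j}=s_{k,i,j}=0$ for all $j$; thus in both of these sums only the neighbours $k\sim i$ of $i$ in $G$ contribute.

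\textbf{Step 2: the two implications.} The ``if'' direction is then immediate: if $s_{i,k,j}=s_{k,i,j}$ for all $i\neq k$, the identity of Step 1 holds term by term. For ``only if'', fix a column $j$ and put $d_{i,k}:=s_{i,k,j}-s_{k,i,j}$ for each edge $\{i,k\}$ of $G$, so that $d_{k,i}=-d_{i,k}$. By Step 1 the balance condition is equivalent to $\sum_{k\sim i}d_{i,k}=0$ at every vertex $i$ of $G$, and I claim the only such antisymmetric edge-function on the finite tree $G$ is $d\equiv0$. Induct on the number of vertices of $G$: if $G$ has no edges there is nothing to show; otherwise choose a leaf $i$ with unique neighbour $k$, so the relation at $i$ reads $d_{i,k}=0$, and delete $i$ together with the edge $\{i,k\}$. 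The remaining graph is again a tree, the relation at $k$ is unchanged on it since the deleted term $d_{k,i}$ vanished, and the relations at all other vertices are untouched; by the inductive hypothesis $d$ is identically zero on the smaller tree, hence on $G$. So $s_{i,k,j}=s_{k,i,j}$ for every edge $\{i,k\}$ and every $j$, and together with the non-edge case this gives $s_{i,k,j}=s_{k,i,j}$ for all $i\neq k$, proving the lemma.

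\textbf{The main obstacle.} The only step requiring genuine care is the count in Step 1 for the second half of the balance condition: one must verify that each mixed pair contributing to it is counted exactly once, with its unique row-$i$ endpoint serving as $u$ and its column-$j$ endpoint as $v$, and that allowing $u$ itself to lie in column $j$ causes neither over- nor under-counting. After that, Step 2 is the familiar fact that a tree has trivial cycle space (equivalently, the oriented incidence matrix of a tree has trivial kernel), which I have written out as a leaf-peeling induction because the $d_{i,k}$ here are concrete integers. It is worth stressing that the tree hypothesis enters precisely at this point: over a support graph containing a cycle one can build a nonzero circulation, and then the identity of Step 1 no longer forces $s_{i,k,j}=s_{k,i,j}$.
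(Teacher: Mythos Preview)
Your proof is correct and follows essentially the same approach as the paper: both rewrite the balance condition as $\sum_{k\neq i}s_{i,k,j}=\sum_{k\neq i}s_{k,i,j}$ and then use a leaf-peeling induction on the support tree to force each summand to vanish individually. Your framing via the antisymmetric edge-function $d_{i,k}$ and the remark about the cycle space is a nice abstraction, but the underlying argument is the same as the paper's.
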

\begin{proof}
First, note that $s_{i,k}$ is the number of mixed pairs $\left\{ u,v\right\} $
with $u$ in row $i$ and $v$ in row $k$, so $s_{i,k}=\sum_{j}s_{i,k,j}$.
Also, let $x_{i,j}$ be the number of mixed vertices in cell $\left(i,j\right)$,
and observe that $x_{i,j}=\sum_{k\neq i}s_{i,k,j}$. Suppose $s_{i,k,j}=s_{k,i,j}$
for all $1\le i,k\le n$ and $1\le j\le K$. Then, by summing over
all $k\neq i$, we have $x_{i,j}=\sum_{k\neq i}s_{i,k,j}=\sum_{k\neq i}s_{k,i,j}$.
As $s_{k,i,j}$ is the number of mixed vertices in cell $\left(k,j\right)$
that are paired with a vertex in row $i$, the latter sum counts the
number of mixed pairs $\left\{ u,v\right\} $ such that $u$ is in
row $i$ and $v$ is in row $k$. Therefore, $\alpha$ satisfies the
balance condition.

Conversely, suppose $\alpha$ satisfies the balance condition. By
the same reasoning, we have $x_{i,j}=\sum_{k\neq i}s_{i,k,j}=\sum_{k\neq i}s_{k,i,j}$.
We will show by induction that $s_{i,k,,j}=s_{k,i,j}$ for all $i\neq k$.

Let $G$ be the support graph of $\mathbf{s}$ and suppose $G$ is
a tree. Without loss of generality, let the vertex $n$ be a leaf
of $G$, and assume that it is adjacent to the vertex $n-1$. As $n$
is not joined to other vertices in $G$, we have $s_{n,k,j}=s_{k,n,j}=0$
for all $1\le k\le n-2$ and $1\le j\le K$. Substituting this into
$\sum_{k\neq n}s_{n,k,j}=\sum_{k\neq i}s_{k,n,j}$, we obtain $s_{n,n-1,j}=s_{n-1,n,j}$.
This gives $s_{n,k,j}=s_{k,n,j}$ for $1\le k\le n-1$ and $1\le j\le K$.

Now, let $s_{i,k}^{\prime}=\sum_{j}s_{i,k,j}$ and $x_{i,j}^{\prime}=\sum_{k\neq i,n}s_{i,k,j}$
for $1\le i,k\le n-1$, $i\neq k$, and $1\le j\le K$. That is, we
have effectively removed the last row of $\alpha$. Then, 
\begin{eqnarray*}
\sum_{k\neq i,n}s_{i,k,j} & = & \sum_{k\neq i}s_{i,k,j}-s_{i,n,j}\\
 & = & \sum_{k\neq i}s_{k,i,j}-s_{n,i,j}\\
 & = & \sum_{k\neq i,n}s_{k,i,j}
\end{eqnarray*}
by using the fact that $s_{i,n,j}=s_{n,i,j}$, and substituting in
the identity for $x_{i,j}$. Furthermore, as $s_{i,k}^{\prime}=s_{i,k}$
for $1\le i,k\le n-1$, the support graph given by $\mathbf{s}^{\prime}$
is $G\backslash\left\{ n\right\} $. As $n$ is a leaf of $G$, $G\backslash\left\{ n\right\} $
is also a tree. By the inductive hypothesis, $s_{i,k,j}=s_{k,i,j}$
for all $1\le i,k\le n-1$ and $1\le j\le K$, where $i\neq k$.

Therefore, $\alpha$ satisfies the balance condition if and only if
$s_{i,k,j}=s_{k,i,j}$ for all $i\neq k$, as desired.
\end{proof}
Now that we have defined the necessary framework for paired arrays,
we will state the relationship between canonical arrays and labelled
arrays.
\begin{thm}
\label{thm:Label to Canonical Array}For $n,K\ge1$, $\mathbf{q}\ge\mathbf{0}$,
and $\mathbf{s}\ge\mathbf{0}$, we have $f_{n,K}^{\left(\mathbf{q};\mathbf{s}\right)}=c_{n,K}^{\left(\mathbf{q};\mathbf{s}\right)}$.
\end{thm}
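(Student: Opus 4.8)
The plan is to establish a bijection between the labelled arrays representing the elements of $\F_{n,K}^{\left(\mathbf{q};\mathbf{s}\right)}$ (which, by the remarks preceding the theorem, has cardinality $f_{n,K}^{\left(\mathbf{q};\mathbf{s}\right)}$) and the canonical arrays $\CA{n}{K}{\mathbf{q}}{\mathbf{s}}$. In the forward direction, from $\left(\mu,\pi\right)\in\F_{n,K}^{\left(\mathbf{q};\mathbf{s}\right)}$ I form its labelled array, erase every vertex label, and in each row $i$ mark the cell that had contained $1^{\underline{i}}$; call this $\Phi\left(\mu,\pi\right)$. The inverse will \emph{reconstruct} the labels of a canonical array $\alpha$ row by row: label $1^{\underline{i}}$ the leftmost vertex of the marked cell of row $i$, and, having placed $x^{\underline{i}}$, label $\left(x+1\right)^{\underline{i}}$ the leftmost as-yet-unlabelled vertex of row $i$ that lies in the column of the partner $\mu\left(x^{\underline{i}}\right)$, for $x=1,\dots,p_{i}-1$.

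First I would check that $\Phi\left(\mu,\pi\right)$ is a canonical array. It has exactly one marked cell per row, so $\mathbf{R}=\mathbf{1}$, and what remains is the balance and forest conditions of \ref{def:Paired Array Conditions}. For balance, combine $\pi\circ\mu=\pi\circ\gamma_{p_{1},\dots,p_{n}}$ with $\mu^{2}=\mathrm{id}$ to get $\pi\circ\left(\gamma_{p_{1},\dots,p_{n}}\mu\right)=\pi$, so the permutation $\gamma_{p_{1},\dots,p_{n}}\mu$ fixes every fibre of $\pi$; since $\gamma_{p_{1},\dots,p_{n}}$ preserves rows, within a fixed column $j$ it restricts to a bijection from the vertices $v$ of column $j$ with $\mu\left(v\right)$ in row $i$ onto the row-$i$ vertices of column $j$, and cancelling the common subset of vertices whose pair lies entirely in row $i$ leaves exactly the balance equality for cell $\left(i,j\right)$. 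For the forest condition, the columns partition $\left\{ 1,\dots,p_{i}\right\} $ into blocks, and through this identification $\psi_{i}$ sends a block $B$ other than the one containing $1^{\underline{i}}$ to the block containing $\max B+1$; as $\max B+1\notin B$ this target has strictly larger maximum, except when $\max B=p_{i}$, in which case the target is the block of $1^{\underline{i}}$ (the marked column). Hence $\psi_{i}$ has no cycles and its unique sink is the marked column, which is the forest condition.

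The main point is that the reconstruction is well-defined and inverts $\Phi$. Fix a canonical array $\alpha$ and a row $i$, and form the directed multigraph $H_{i}$ on the columns with one edge from the column of $v$ to the column of $\mu\left(v\right)$ for each row-$i$ vertex $v$. The out-degree of a column is then its number of row-$i$ vertices, and — this is where balance is used — splitting the edges into those of non-mixed pairs (where $\mu$ is an internal bijection of row $i$) and those of mixed pairs (where the balance condition matches the mixed vertices of a cell with the mixed pairs pointing into it), one sees that the in-degree of each column equals its out-degree, so $H_{i}$ is supported on the columns meeting row $i$ and is balanced there. The forest condition says that the edges of $H_{i}$ coming from the rightmost vertex of each cell (these are exactly the $\psi_{i}$-edges) form a spanning in-tree of that support directed toward the marked column. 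The rule ``leftmost unlabelled vertex first'' traverses the out-edges of each column in the left-to-right order of its cell, hence uses the $\psi_{i}$-edge last; this is precisely the classical construction of an Euler circuit of a balanced connected digraph from a spanning in-tree (the one behind the BEST theorem). Consequently the reconstruction never stalls, visits each row-$i$ vertex exactly once, and closes up at the marked column, so it respects the within-cell orders and, together with the other rows, satisfies relation~\eqref{eq:Algebraic Relation} (the wrap-around instance $x=p_{i}$ being exactly the closing-up). Thus the reconstruction of $\alpha$ is a labelled array with the same cells and pairing as $\alpha$ and with $1^{\underline{i}}$ in the marked cell, so applying $\Phi$ returns $\alpha$; and since the marking, the within-cell orders and \eqref{eq:Algebraic Relation} force the labelling uniquely, the reconstruction also recovers a labelled array from $\Phi$. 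Hence $\Phi$ is a bijection and $f_{n,K}^{\left(\mathbf{q};\mathbf{s}\right)}=c_{n,K}^{\left(\mathbf{q};\mathbf{s}\right)}$.

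The hard part will be the third paragraph: verifying that the balance condition is equivalent to the in/out-degree balance of $H_{i}$, that the forest condition supplies the spanning in-tree, and that the ``leftmost unlabelled vertex'' rule realizes the tree-edge-last Euler-circuit construction — in particular keeping straight the correspondence between the left-to-right order inside a cell and the order in which $H_{i}$'s out-edges are used, and checking that an Euler circuit forces the wrap-around instance of \eqref{eq:Algebraic Relation}.
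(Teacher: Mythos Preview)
Your proposal is correct and uses exactly the same bijection as the paper: erase labels and mark the cell of $1^{\underline{i}}$ in each row, with the inverse given by the label recovery procedure. The paper's own proof is a one-paragraph deferral to Goulden--Slofstra \cite{Goulden-Slofstra:2010}, so you are in fact supplying the details the paper omits; your recasting of the label recovery as the tree-edge-last Euler-circuit construction (BEST theorem) on the row digraph $H_i$ is a clean way to see why the balance condition gives the Eulerian property, the forest condition gives the spanning in-arborescence, and hence the recovery never stalls and closes up---this is a genuinely nicer justification than a bare induction on the label, though the underlying map is identical.
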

The proof is essentially the same as that in Goulden and Slofstra,
but without the non-empty condition. To obtain the canonical array
from labelled array, we simply mark the cell that contains $1^{^{\left(i\right)}}$
in each row $i$, then remove the labels. To reconstruct the labelled
array and prove that it is a bijection, we use the same \emph{label
recovery procedure} introduced in their paper. As an example of this
bijection, we have transformed the labelled array depicted in \ref{fig:Labelled Array}
into the canonical array depicted in \ref{fig:Canonical Array}.

\begin{figure}
\begin{centering}
\includegraphics{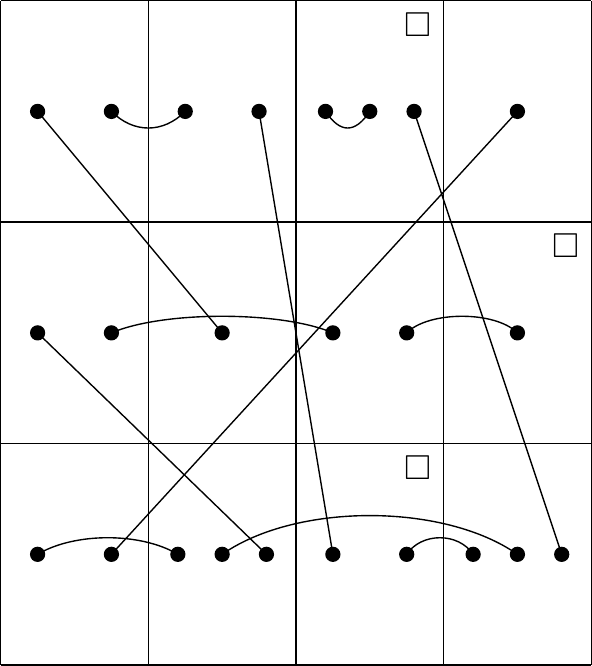}
\par\end{centering}
\caption{\label{fig:Canonical Array}A canonical paired array with 3 rows and
4 columns}
\end{figure}

Now that we know that canonical arrays are in bijection with labelled
arrays with the same parameters, the problem of enumerating maps on
surfaces reduces to that of enumerating canonical arrays. To solve
the latter problem, we will extend the procedure in Goulden and Slofstra
to remove all non-mixed pairs. Then, we will decompose the resulting
paired arrays via induction, removing one row at a time.
\begin{thm}
\label{thm:Canonical Vertical Formula}Let $n,K\ge1$, $\mathbf{q}\ge\mathbf{0}$,
and $\mathbf{s}\ge\mathbf{0}$. We have
\[
c_{n,K}^{\left(\mathbf{q};\mathbf{s}\right)}=\sum_{\mathbf{t}=\mathbf{0}}^{\mathbf{q}}\prod_{i=1}^{n}\frac{\left(2q_{i}+s_{i}\right)!}{2^{t_{i}}t_{i}!\left(s_{i}+q_{i}-t_{i}\right)!}\cdot v_{n,K;\mathbf{q-t+1}}^{\left(\mathbf{s}\right)}
\]
Furthermore, if $v_{n,K;\mathbf{R}}^{\left(\mathbf{s}\right)}$ can
be written as a polynomial expression in $K$ for all $R_{i}$, where
$1\le R_{i}\le q_{i}+1$, then $c_{n,K}^{\left(\mathbf{q};\mathbf{s}\right)}$
can be written as a polynomial expression in $K$.
\end{thm}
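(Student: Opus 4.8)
The plan is to establish a reversible \emph{reduction procedure} that strips all non-mixed pairs out of a proper paired array one row at a time, and then to count the choices involved in running it backwards. Since the non-mixed pairs of a given row never touch mixed vertices or mixed pairs, the balance condition is unaffected at every stage, so no tree-shape hypothesis is needed here and the rows can be processed independently; the only condition that needs care is the forest condition of \ref{def:Paired Array Conditions}. Under this procedure a canonical array in $\CA{n}{K}{\mathbf{q}}{\mathbf{s}}$ reduces to a proper vertical array in $\PVA{n}{K}{\mathbf{R}}{\mathbf{s}}$ for some $\mathbf{R}$ with $1\le R_{i}\le q_{i}+1$, so that summing over the possible $\mathbf{R}$ --- equivalently, over the auxiliary parameter $\mathbf{t}$ via $R_{i}=q_{i}-t_{i}+1$ --- produces the claimed identity.

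First I would describe a single removal step. Fix a row $i$ with at least one non-mixed pair and single out a canonical non-mixed pair of that row, say the one containing its leftmost non-mixed vertex. Classify this pair as \emph{type~1} if neither of its two vertices is the rightmost vertex of its cell, and \emph{type~2} otherwise. A type 1 removal simply deletes the two vertices in place, recording their horizontal positions among the surviving objects of row $i$; here the forest condition function $\psi_{i}$ is unchanged and $\mathbf{R}$ stays the same. A type 2 removal also deletes the two vertices, but to restore the forest condition it must promote one of the two affected cells to a marked cell (so $R_{i}$ increases by one), and there is a binary choice of which cell inherits the box; one records this bit along with the positions of the deleted vertices. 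The key point to verify is that a single removal turns a proper paired array into a proper paired array with one fewer non-mixed pair --- this requires tracing through the definition of $\psi_{i}$ and the forest condition in \ref{def:Paired Array Conditions} --- and that the recorded data lets one invert the step. Iterating within row $i$ until $q_{i}$ is exhausted, and then over all rows, yields the reduction to a proper vertical array with $R_{i}=q_{i}-t_{i}+1$, where $t_{i}$ is the number of type 1 removals performed in row $i$.

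Next I would count the fibres of this reduction. Fix $\mathbf{R}$ with $1\le R_{i}\le q_{i}+1$, put $t_{i}=q_{i}-R_{i}+1$, and show that the number of ways to run the inverse procedure on a fixed proper vertical array in $\PVA{n}{K}{\mathbf{R}}{\mathbf{s}}$ is exactly $\prod_{i=1}^{n}\frac{(2q_{i}+s_{i})!}{2^{t_{i}}t_{i}!(s_{i}+q_{i}-t_{i})!}$. This is the most delicate part: the data to be chosen in row $i$ --- a grouping of the $2q_{i}$ reinserted non-mixed vertices into $t_{i}$ unordered, unoriented type 1 pairs and $q_{i}-t_{i}$ type 2 pairs, the binary choice attached to each type 2 pair, and the horizontal positions into which everything is inserted among the $p_{i}=2q_{i}+s_{i}$ objects of the reconstructed row --- are not independent, since the forest condition constrains how the type 2 insertions interact, and one must check that the constrained count collapses to the stated coefficient, with the $2^{t_{i}}t_{i}!$ and $(s_{i}+q_{i}-t_{i})!$ in the denominator accounting for exactly the symmetries and redundancies. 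Summing the fibre sizes over all admissible $\mathbf{R}$, using the convention $v_{n,K;\mathbf{R}}^{(\mathbf{s})}=0$ when some $R_{i}>K$, and reindexing by $\mathbf{t}$ gives
\begin{eqnarray*}
c_{n,K}^{\left(\mathbf{q};\mathbf{s}\right)} & = & \sum_{\mathbf{t}=\mathbf{0}}^{\mathbf{q}}\prod_{i=1}^{n}\frac{\left(2q_{i}+s_{i}\right)!}{2^{t_{i}}t_{i}!\left(s_{i}+q_{i}-t_{i}\right)!}\cdot v_{n,K;\mathbf{q-t+1}}^{\left(\mathbf{s}\right)}.
\end{eqnarray*}

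Finally, the polynomiality statement is immediate from this identity: the sum has $\prod_{i=1}^{n}(q_{i}+1)$ terms, and none of the coefficients $\frac{(2q_{i}+s_{i})!}{2^{t_{i}}t_{i}!(s_{i}+q_{i}-t_{i})!}$ depends on $K$, so if every $v_{n,K;\mathbf{R}}^{(\mathbf{s})}$ with $1\le R_{i}\le q_{i}+1$ agrees with a polynomial in $K$, then $c_{n,K}^{(\mathbf{q};\mathbf{s})}$ is a finite $\mathbb{Z}$-linear combination of polynomials in $K$, hence a polynomial in $K$. The main obstacle is thus entirely in the combinatorics of the reduction: pinning down the type 1/type 2 dichotomy and the recorded data so that a single step provably preserves the forest condition and is reversible, and then verifying that the constrained number of ways to undo all of them in a row is precisely $\frac{(2q_{i}+s_{i})!}{2^{t_{i}}t_{i}!(s_{i}+q_{i}-t_{i})!}$; once these local facts are in place the rest is routine.
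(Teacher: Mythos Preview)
Your proposal takes essentially the same approach as the paper: the paper's own proof simply invokes the Goulden--Slofstra non-mixed-pair removal procedure, carried out independently in each row with the non-empty condition dropped, and then observes that the summation bounds and coefficients in the resulting expression are independent of $K$; your row-by-row reduction via a type~1/type~2 dichotomy is exactly that procedure, and your polynomiality argument matches the paper's verbatim. One small point to watch when you fill in the details: the operative distinction is whether the pair contains a \emph{critical} vertex (rightmost of an \emph{unmarked} cell), not merely a rightmost one, and in the forward direction the cell that receives the new box is forced rather than freely chosen --- but as you yourself flag, getting this bookkeeping to collapse to $\frac{(2q_i+s_i)!}{2^{t_i}t_i!(s_i+q_i-t_i)!}$ is precisely the delicate part, and it is exactly what the Goulden--Slofstra argument supplies.
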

\begin{proof}
Despite not having the non-empty condition in our definition of the
paired functions and paired array, the proof of this theorem is essentially
the same as that of Goulden and Slofstra. The polynomiality of $c_{n,K}^{\left(\mathbf{q};\mathbf{s}\right)}$
follows from the fact that the summation bounds are independent of
$K$, so $c_{n,K}^{\left(\mathbf{q};\mathbf{s}\right)}$ as expressed
above is a polynomial combination of $v_{n,K;\mathbf{q-t+1}}^{\left(\mathbf{s}\right)}$,
with coefficients that are also independent of $K$.
\end{proof}
For example, by decomposing the canonical array in \ref{fig:Canonical Array},
we can obtain the vertical array in \ref{fig:Vertical Array}. Then,
by combining the theorems we have so far, we can write the generating
series in terms of the number of vertical arrays.
\begin{cor}
Let $n,K\ge1$, $\mathbf{q}\ge\mathbf{0}$, and $\mathbf{s}\ge\mathbf{0}$.
We have
\begin{eqnarray*}
A_{n}^{\left(\mathbf{q};\mathbf{s}\right)}\left(K\right) & = & \sum_{\mathbf{t}=\mathbf{0}}^{\mathbf{q}}\prod_{i=1}^{n}\frac{\left(2q_{i}+s_{i}\right)!}{2^{t_{i}}t_{i}!\left(s_{i}+q_{i}-t_{i}\right)!}\cdot v_{n,K;\mathbf{q-t+1}}^{\left(\mathbf{s}\right)}
\end{eqnarray*}
\end{cor}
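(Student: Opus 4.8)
The plan is to simply chain together the three identities established earlier in this section. The generating-series relation \ref{eq:Genfunc Label Relation} gives $A_{n}^{\left(\mathbf{q};\mathbf{s}\right)}\left(K\right)=f_{n,K}^{\left(\mathbf{q};\mathbf{s}\right)}$ for every integer $K\ge1$; here it is essential that $K$ be a positive integer, since the identity $f_{n,K}^{\left(\mathbf{q};\mathbf{s}\right)}=\sum_{L\ge1}a_{n,L}^{\left(\mathbf{q};\mathbf{s}\right)}K^{L}$ was obtained by counting, for each pairing $\mu$ with $L$ cycles, the $K^{L}$ admissible colourings $\pi$. Next, \ref{thm:Label to Canonical Array} identifies $f_{n,K}^{\left(\mathbf{q};\mathbf{s}\right)}$ with the number of canonical arrays $c_{n,K}^{\left(\mathbf{q};\mathbf{s}\right)}$ via the cell-marking/label-recovery bijection. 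Finally, \ref{thm:Canonical Vertical Formula} expresses $c_{n,K}^{\left(\mathbf{q};\mathbf{s}\right)}$ as the stated weighted sum of proper-vertical-array counts $v_{n,K;\mathbf{q-t+1}}^{\left(\mathbf{s}\right)}$.

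Concretely, the proof is one line: for $K\ge1$,
\[
A_{n}^{\left(\mathbf{q};\mathbf{s}\right)}\left(K\right)\;=\;f_{n,K}^{\left(\mathbf{q};\mathbf{s}\right)}\;=\;c_{n,K}^{\left(\mathbf{q};\mathbf{s}\right)}\;=\;\sum_{\mathbf{t}=\mathbf{0}}^{\mathbf{q}}\prod_{i=1}^{n}\frac{\left(2q_{i}+s_{i}\right)!}{2^{t_{i}}t_{i}!\left(s_{i}+q_{i}-t_{i}\right)!}\cdot v_{n,K;\mathbf{q-t+1}}^{\left(\mathbf{s}\right)},
\]
where the three equalities are \ref{eq:Genfunc Label Relation}, \ref{thm:Label to Canonical Array}, and \ref{thm:Canonical Vertical Formula} respectively. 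No additional combinatorial work is required.

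The only point that deserves a remark — and the closest thing to an obstacle — is the distinction between an identity of integers holding for all positive integers $K$ and an identity of polynomials (or of formal power series) in the indeterminate $x$. The left-hand side $A_{n}^{\left(\mathbf{q};\mathbf{s}\right)}\left(x\right)=\sum_{L\ge1}a_{n,L}^{\left(\mathbf{q};\mathbf{s}\right)}x^{L}$ is a polynomial in $x$ of degree at most the maximal number of cycles, so it is determined by its values at $K=1,2,3,\dots$; hence the displayed chain of equalities, valid for every $K\ge1$, upgrades to the asserted identity once one knows (as guaranteed by the polynomiality clauses of \ref{thm:Canonical Vertical Formula} together with the polynomiality of $v_{n,K;\mathbf{R}}^{\left(\mathbf{s}\right)}$, which is part of \ref{thm:main formula}) that the right-hand side is itself a polynomial in $K$. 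For the present corollary, however, it suffices to read the statement as an equality of values for each fixed $K\ge1$, exactly as the analogous statement of \ref{thm:main formula}; so the proof is complete with the single displayed line above.
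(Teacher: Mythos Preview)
Your proof is correct and follows exactly the same approach as the paper: the paper's own proof simply says that by combining \ref{eq:Genfunc Label Relation}, \ref{thm:Label to Canonical Array}, and \ref{thm:Canonical Vertical Formula}, the result immediately follows. Your additional remark on polynomiality is sound but not needed here, since the corollary is stated (as you note) only as an equality of values for each fixed integer $K\ge1$.
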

\begin{proof}
By combining \ref{eq:Genfunc Label Relation}, \ref{thm:Label to Canonical Array},
and \ref{thm:Canonical Vertical Formula}, the result immediately
follows.
\end{proof}
\begin{figure}
\begin{centering}
\includegraphics{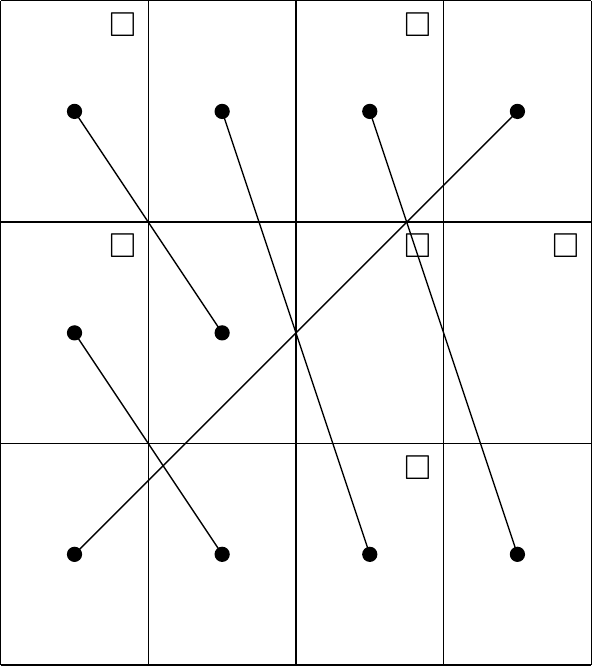}
\par\end{centering}
\caption{\label{fig:Vertical Array}Proper vertical array from the decomposition
of \ref{fig:Canonical Array}}
\end{figure}

\section{\label{sec:Arrowed Array Definitions}Definitions and Terminology
of Arrowed Arrays}

In this section, we will extend two-row paired arrays by the addition
of arrows, which represent hypothetical critical vertices. This will
allow us recursively decompose vertical arrays into arrowed arrays
and smaller vertical arrays. Some of the definitions and theorems
are taken directly from \cite{Chan:2017-1}, while others are direct
extensions. For the sake of length, we will omit the proofs of those
theorems.
\begin{defn}
\label{def:Arrowed Array}Let $K\ge1$, $s\ge0$, and $1\le R_{1},R_{2}\le K$.
An \emph{arrowed array} is a pair $\left(\alpha,\phi\right)$, where
$\alpha\in\VA{2}{K}{R_{1},R_{2}}{s}$ is a two-row vertical array,
and $\phi\colon\K\backslash\R_{1}\rightarrow\K$ is a partial function
from $\H\subseteq\K\backslash\R_{1}$ to $\K$, with $\R_{1}$ being
the set of marked columns in row 1 of $\alpha$. Graphically, $\phi$
is denoted by arrows drawn above row 1, where an arrow from $j$ to
$j^{\prime}$ is drawn if $j\in\H$ and $\phi\left(j\right)=j^{\prime}$.
For convenience, the two ends of the arrow belonging to columns $j$
and $j^{\prime}$ are called the \emph{arrow-tail} and \emph{arrow-head}
respectively, and column $j$ is said to \emph{point to} column $j^{\prime}$.
Furthermore, both the arrow-tail and arrow-head belong to row 1 of
their respective columns.

With the generalization of paired arrays to arrowed arrays, there
are corresponding generalizations of the terms and conventions used
to describe paired arrays. These generalizations will be compatible
with the conventions for paired arrays if the partial function $\phi$
is empty.

\begin{itemize}
\item An \emph{object} of $\left(\alpha,\phi\right)$ refers to either a
vertex, a box, or an arrow-tail. If a cell both contains vertices
and a box, or vertices and an arrow-tail, either the box or the arrow-tail
is to be taken as the rightmost object of the cell.
\item A vertex $v$ of an arrowed array is \emph{critical }if it is the
rightmost vertex of a cell, and the cell it belongs to is neither
marked nor contains an arrow-tail.
\item $\left(\alpha,\phi\right)$ is said to satisfy the \emph{non-empty
condition} if for each column $j$, there exists at least one cell
that contains an object.
\item $\left(\alpha,\phi\right)$ is said to satisfy the \emph{balance condition}
if for each column $j$, the number of vertices in cell $\left(1,j\right)$
is equal to the number of vertices in cell $\left(2,j\right)$.
\item Let $\F_{i}$ be the set of columns in row $i$ that contain at least
one vertex. The \emph{forest condition function} $\psi_{1}\colon\left(\H\cup\F_{1}\right)\backslash\R_{1}\mapsto\K$
for row 1 is defined as follows: For each column $j\in\H$, let $\psi_{1}\left(j\right)=\phi\left(j\right)$;
for $j\in\F_{1}\backslash\left(\H\cup\R_{1}\right)$, if the rightmost
vertex $v$ is paired with a vertex $u$ in column $j^{\prime}$,
let $\psi_{1}\left(j\right)=j^{\prime}$. The forest condition function
$\psi_{2}$ for row 2 is defined to be the same as the one for paired
arrays in \ref{def:Paired Array Conditions}. $\left(\alpha,\phi\right)$
is said to satisfy the \emph{forest condition} if the functional digraph
of $\psi_{1}$ on the vertex set $\H\cup\F_{1}\cup\psi_{1}\left(\H\cup\F_{1}\right)\cup\R_{1}$
is a forest with root vertices $\R_{1}$, and the functional digraph
of $\psi_{2}$ on the vertex set $\F_{2}\cup\psi_{2}\left(\F_{2}\right)\cup\R_{2}$
is a forest with root vertices $\R_{2}$. That is, for each column
$j\in\left(\H\cup\F_{1}\right)\backslash\R_{1}$, there exists some
positive integer $t$ such that $\psi_{1}^{t}\left(j\right)\in\R_{1}$,
and for each column $j\in\F_{2}\backslash\R_{2}$, there exists some
positive integer $t$ such that $\psi_{2}^{t}\left(j\right)\in\R_{2}$.
\item Additionally, $\left(\alpha,\phi\right)$ is said to satisfy the \emph{full
condition} if every cell contains at least one object.
\end{itemize}
The set of arrowed arrays that satisfies the forest condition is denoted
$\AR{K}{R_{1}}{R_{2}}{s}$.
\end{defn}
Notice in particular that a cell cannot contain both an arrow-tail
and be marked at the same time. Unless otherwise stated, we will continue
to use the conventions for paired arrays defined in \ref{conv:Array Convention}
for arrowed arrays. However, we will be using the definition of critical
vertex defined here instead of the one in \ref{def:Paired Array}.
As with paired arrays, we will always include the columns $\R_{i}$
in the vertex set for the functional digraph of $\psi_{i}$, regardless
of whether they are in the range of $\psi_{i}$. Note that permuting
the columns of an arrowed array does not change whether the array
satisfies the balance or forest conditions, as all this action does
is to relabel the vertices of the functional digraph. Furthermore,
to reduce cluttering, we will draw the boxes for row 2 at the lower
right corner instead of the upper right. An example of an arrowed
array that satisfies the forest condition can be found in \ref{fig:Arrowed Array}.

\begin{figure}
\begin{centering}
\includegraphics{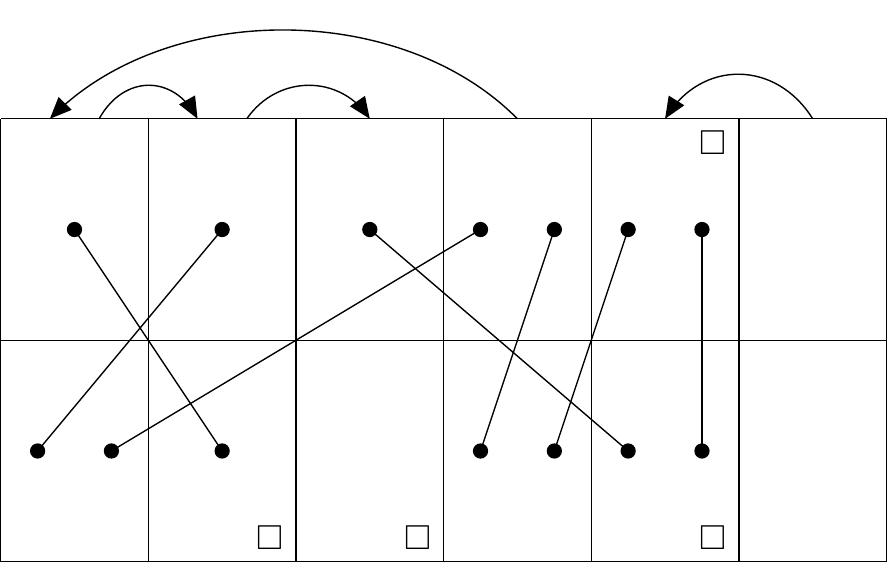}
\par\end{centering}
\caption{\label{fig:Arrowed Array}A arrowed array in $\AR{6}{1}{3}{7}$}
\end{figure}

While the parameters used for defining the set of arrowed arrays is
natural with respect to paired arrays, it does not easily lend itself
to a formula. To make it manageable for summation, we need to partition
the set of arrowed arrays by adding further constraints, which will
take for form of three different substructures.
\begin{defn}
\label{def:Substructure}Let $K\ge1$, $s\ge0$, and $1\le R_{1},R_{2}\le K$.
A \emph{substructure} $\Theta$ of $\AR{K}{R_{1}}{R_{2}}{s}$ is a
set of constraints that defines a subset of $\AR{K}{R_{1}}{R_{2}}{s}$.
For convenience, an arrowed array $\left(\alpha,\phi\right)$ is said
to satisfy $\Theta$ if $\left(\alpha,\phi\right)$ satisfies the
constraints given by $\Theta$. In particular, here are the three
substructures that we will use in this paper.

\begin{itemize}
\item Let $\mathbf{w}$ be a non-negative matrix of size $2\times K$, $\R_{1}$
and $\R_{2}$ be $R_{1}$ and $R_{2}$ subsets of $\K$, and $\phi$
be a partial function from $\H\subseteq\K\backslash\R_{1}$ to $\K$.
The substructure $\Gamma=\left(\mathbf{w},\R_{1},\R_{2},\phi\right)$
is defined to be the subset of $\AR{K}{R_{1}}{R_{2}}{s}$, such that
for each pair $\left(\alpha^{\prime},\phi^{\prime}\right)\in\AR{K}{R_{1}}{R_{2}}{s}$,
$\alpha^{\prime}$ contains $w_{i,j}$ vertices in cell $\left(i,j\right)$,
the marked cells in row 1 and 2 of $\alpha^{\prime}$ are $\R_{1}$
and $\R_{2}$, respectively, and $\phi^{\prime}=\phi$.
\item Let $\mathbf{w}$ be a non-negative vector of size $K$, $\R_{1}$
be an $R_{1}$ subset of $\K$, and $\phi$ be a partial function
from $\H\subseteq\K\backslash\R_{1}$ to $\K$. The substructure $\Delta=\left(\mathbf{w},\R_{1},\phi\right)$
is defined to be the subset of $\AR{K}{R_{1}}{R_{2}}{s}$, such that
for each pair $\left(\alpha^{\prime},\phi^{\prime}\right)\in\AR{K}{R_{1}}{R_{2}}{s}$,
$\left(\alpha^{\prime},\phi^{\prime}\right)$ satisfies the balance
condition, $\alpha^{\prime}$ contains $w_{j}$ vertices in both cells
$\left(1,j\right)$ and $\left(2,j\right)$, the marked cells in row
1 of $\alpha^{\prime}$ is $\R_{1}$, and $\phi^{\prime}=\phi$. Furthermore,
for $A\ge0$, we define $\Delta_{A}$ to be the substructure that
describes the subset of arrowed arrays that satisfies $\Delta$, and
have exactly $A$ columns of type $\A$.
\item Let $\P$ be a subset of $\K$ with $\left|\P\right|\ge R_{1}\ge1$,
$\mathbf{x}$ be a non-negative vector of size $K$, and $\phi\colon\K\backslash\P\rightarrow\K$
be a partial function from $\H\subseteq\K\backslash\P$ to $\H\cup\P$.
Suppose that $x_{j}=0$ for all $j\notin\H\cup\P$ and $s$ be such
that $\sum_{j}x_{j}=s-\left|\P\right|+R_{1}$. The substructure $\Lambda=\left(\mathbf{x},\P,\phi\right)$
is defined to be the subset of $\AR{K}{R_{1}}{R_{2}}{s}$, such that
for each pair $\left(\alpha^{\prime},\phi^{\prime}\right)\in\AR{K}{R_{1}}{R_{2}}{s}$,
$\left(\alpha^{\prime},\phi^{\prime}\right)$ satisfies the balance
condition, the set of marked cells in row 1 of $\alpha^{\prime}$
is a subset of $\P$, and $\phi^{\prime}=\phi$. Furthermore, for
each column $j\in\H\cup\P$, both cells $\left(1,j\right)$ and $\left(2,j\right)$
contains $x_{j}+1$ vertices if $j\in\P$ and is unmarked, and $x_{j}$
vertices otherwise.
\end{itemize}
For convenience, we say a substructure $\Theta$ is a \emph{refinement}
of another substructure $\Theta^{\prime}$ if the set of arrowed arrays
satisfying $\Theta$ is a subset of the arrowed arrays satisfying
$\Theta^{\prime}$. We denote it as $\Theta\hookrightarrow\Theta^{\prime}$.
Furthermore, if $\Theta_{1},\dots,\Theta_{t}$ is a set of substructures
that are refinements of a substructure $\Theta^{\prime}$, we say
that $\Theta_{1},\dots,\Theta_{t}$ \emph{partitions} $\Theta^{\prime}$
if the sets of arrowed arrays satisfying the $\Theta_{i}$'s are mutually
disjoint, and their union is the set of arrowed arrays that satisfy
$\Theta$. Finally, we will use arrowed array terminologies such as
critical vertex, arrow-head, and points to with substructures when
they are applicable.
\end{defn}
Note that the latter substructures in \ref{def:Substructure} can
be partitioned using the substructure directly above. Furthermore,
the substructures $\Delta_{A}$ are refinements of substructures $\Delta$.
Also, for substructure $\Lambda$, the vertices are restricted to
the columns $\H\cup\P$, and $\mathbf{x}$ represents the number of
non-critical vertices in row 1.
\begin{lem}
\label{lemma:Arrow Simplification Substructure-1}Let $\Gamma=\left(\mathbf{w},\R_{1},\R_{2},\phi\right)$
be a substructure of $\AR{K}{R_{1}}{R_{2}}{s}$, and suppose that
$\phi$ contains a column $\X$ that points to a column $\Y$, with
cell $\left(1,\Y\right)$ marked. Let $\Gamma^{\prime}=\left(\mathbf{w},\R_{1}\cup\left\{ \X\right\} ,\R_{2},\phi^{\prime}\right)$
be a substructure of $\AR{K}{R_{1}+1}{R_{2}}{s}$, such that
\begin{eqnarray*}
\phi^{\prime}\left(j\right) & = & \begin{cases}
\mbox{undefined} & j=\X\\
\phi\left(j\right) & j\in\H\backslash\X\mbox{ },
\end{cases}
\end{eqnarray*}
that is, instead of pointing to $\Y$, we mark cell $\left(1,\X\right)$
of $\Gamma^{\prime}$. Then, the number of arrowed arrays satisfying
$\Gamma$ and the number of arrowed arrays satisfying $\Gamma^{\prime}$
are equal. Furthermore, $\Gamma$ satisfies the balance, non-empty,
and full conditions if and only if $\Gamma^{\prime}$ satisfies them,
respectively.
\end{lem}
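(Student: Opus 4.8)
The plan is to exploit the fact that the substructures $\Gamma$ and $\Gamma'$ fix every piece of combinatorial data of an arrowed array except the pairing $\mu$ of the underlying two-row vertical array: the cell contents $\mathbf{w}$, the marked columns of row~$2$, and the arrow function are all prescribed, so an element of $\Gamma$ (resp.\ of $\Gamma'$) is precisely a bijection $\mu$ between the row-$1$ vertices and the row-$2$ vertices such that the resulting arrowed array satisfies the forest condition. Since $\mathbf{w}$, $\R_2$ and the notion of a pairing $\mu$ are literally the same for $\Gamma$ and $\Gamma'$, I would take the asserted bijection to be the identity on $\mu$; the whole statement then reduces to showing that a pairing $\mu$ makes the forest condition hold with respect to the data of $\Gamma$ if and only if it makes it hold with respect to the data of $\Gamma'$ (and the balance/non-empty/full claims, handled separately below).

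For that reduction I would compare the forest-condition functions $\psi_1$ of $\Gamma$ and $\Gamma'$. Passing from $\Gamma$ to $\Gamma'$ replaces the arrow-tail in cell $\left(1,\X\right)$ by a box; since a box and an arrow-tail both lie to the right of all vertices of a cell, neither the set of critical vertices nor the rightmost vertex of any cell changes, so $\psi_2$ is unchanged and $\psi_1$ is unchanged on every column other than $\X$. On the column $\X$ itself we have, in $\Gamma$, $\X\in\H$ and hence $\psi_1\left(\X\right)=\phi\left(\X\right)=\Y$, while in $\Gamma'$ the column $\X$ has been added to $\R_1$, so it becomes a root and leaves the domain of $\psi_1$; moreover $\Y\in\R_1$ is included in the vertex set of the functional digraph in both cases, so the ambient vertex set does not change either. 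Thus the two functional digraphs of $\psi_1$ differ exactly by deleting the edge $\X\to\Y$ and adjoining $\X$ to the root set.

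It then remains to invoke the elementary graph-theoretic fact that, since $\Y\in\R_1$ is a root and therefore has no out-edge (whence $\X\neq\Y$ and $\X$ is not reachable from $\Y$), deleting the edge $\X\to\Y$ and declaring $\X$ a root converts a forest with root set $\R_1$ into a forest with root set $\R_1\cup\left\{\X\right\}$, and conversely: in one direction the descendants of $\X$ (possibly just $\X$) split off as their own tree, and in the other direction re-attaching $\X$ via the edge to the root $\Y$ merges the two trees without creating a cycle. This gives the equality of cardinalities. The remaining assertions are immediate: the balance condition only constrains $\mathbf{w}$, which is common to $\Gamma$ and $\Gamma'$; and for the non-empty and full conditions the only cell whose object content changes is $\left(1,\X\right)$, which contains an object in both cases (the arrow-tail, respectively the box), so each of the three conditions holds for $\Gamma$ exactly when it holds for $\Gamma'$.

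I expect the only delicate point to be the precise bookkeeping in the second step: verifying that deleting $\X$ from $\H$ while adding it to $\R_1$ leaves the domain of $\psi_1$ equal to its old domain minus $\X$, leaves $\psi_1$ literally unchanged there, and leaves the ambient vertex set of the digraph unchanged. The tree surgery in the third step, and the checks for the balance, non-empty, and full conditions, are routine.
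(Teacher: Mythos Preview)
Your proposal is correct and complete. The paper does not actually give its own proof of this lemma here---it defers to \cite{Chan:2017-1}---but your argument is precisely the natural one: the identity map on pairings is a bijection between arrays satisfying $\Gamma$ and those satisfying $\Gamma'$, because the only change in $\psi_1$ is the deletion of the edge $\X\to\Y$ landing at a root, and the balance/non-empty/full conditions are unaffected since cell $(1,\X)$ contains an object (arrow-tail or box) in both substructures.
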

The proof of this lemma can be found in \cite{Chan:2017-1}, and by
changing the proof slightly, we can show that for substructure $\Delta$,
the number of arrowed arrays satisfying $\Delta=\left(\mathbf{w},\R_{1},\phi\right)$
and $\Delta^{\prime}=\left(\mathbf{w},\R_{1}\cup\left\{ \X\right\} ,\phi^{\prime}\right)$
are equal.
\begin{lem}
\label{lemma:Arrow Simplification Substructure-2}Let $\Gamma=\left(\mathbf{w},\R_{1},\R_{2},\phi\right)$
be a substructure of $\AR{K}{R_{1}}{R_{2}}{s}$, and suppose that
$\phi$ contains a column $\X$ that points to a column $\Y$, and
the column $\Y$ points to another column $\Z$. Let $\Gamma^{\prime}=\left(\mathbf{w},\R_{1},\R_{2},\phi^{\prime}\right)$
be a substructure of $\AR{K}{R_{1}}{R_{2}}{s}$ such that
\begin{eqnarray*}
\phi^{\prime}\left(j\right) & = & \begin{cases}
\Z & j=\X\\
\phi\left(j\right) & j\in\H\backslash\X\mbox{ },
\end{cases}
\end{eqnarray*}
that is, instead of pointing to $\Y$, $\X$ now points to $\Z$ in
$\phi^{\prime}$. Then, the number of arrowed arrays satisfying $\Gamma$
and the number of arrowed arrays satisfying $\Gamma^{\prime}$ are
equal. Furthermore, $\Gamma$ satisfies the balance, non-empty, and
full conditions if and only if $\Gamma^{\prime}$ satisfies them,
respectively.
\end{lem}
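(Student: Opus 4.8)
The plan is to realize the claimed bijection as the identity on the underlying pairing and to reduce the whole statement to a fact about two functional digraphs that differ by contracting one edge of a directed path.

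First I would note that a substructure of the form $\left(\mathbf w,\R_1,\R_2,\phi\right)$ fixes every piece of data of an arrowed array except the pairing $\mu$ of its underlying vertical array: the cell contents are determined by $\mathbf w$, the marked columns by $\R_1$ and $\R_2$, and the arrow function by $\phi$. Thus an arrowed array satisfies $\Gamma$ precisely when its pairing $\mu$ makes $\left(\alpha,\phi\right)$ meet the forest condition (that is, lie in $\AR{K}{R_1}{R_2}{s}$), and similarly for $\Gamma'$ with $\phi'$. Since $\phi$ and $\phi'$ have the same domain $\H$ and $\Gamma,\Gamma'$ impose the same $\mathbf w,\R_1,\R_2$, the map that replaces $\phi$ by $\phi'$ while keeping $\mu$ is a bijection from arrowed arrays with arrow function $\phi$ onto those with arrow function $\phi'$; it remains only to check it sends arrays satisfying $\Gamma$ to arrays satisfying $\Gamma'$, i.e.\ that for each fixed $\mu$, $\left(\alpha,\phi\right)$ satisfies the forest condition if and only if $\left(\alpha,\phi'\right)$ does. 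I would first dispose of the degenerate cases where $\X,\Y,\Z$ are not pairwise distinct: each such collision forces a fixed point or a $2$-cycle into $\psi_1$ and, after the re-routing, into $\psi_1'$ as well (e.g.\ $\Y=\Z$ gives $\psi_1(\Y)=\Y$ and also $\phi'(\X)=\Z=\Y$ with $\psi_1'(\Y)=\Y$), so both $\Gamma$ and $\Gamma'$ describe the empty set and the counts agree trivially. Henceforth $\X,\Y,\Z$ are distinct.

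The heart of the proof is the comparison of $\psi_1$ and $\psi_1'$. These functions have the same domain $\left(\H\cup\F_1\right)\backslash\R_1$ (note $\X,\Y\in\H$, so neither lies in $\R_1$), agree on every column other than $\X$, and satisfy $\psi_1(\X)=\Y$, $\psi_1'(\X)=\Z$, while $\psi_1(\Y)=\psi_1'(\Y)=\phi(\Y)=\Z$. By the forest condition of \ref{def:Arrowed Array} it suffices to show that every column of the domain reaches $\R_1$ under iteration of $\psi_1$ iff it does under $\psi_1'$. Fix a column $j$; if the forest condition holds for $\psi_1$, its $\psi_1$-orbit is finite, acyclic, and ends in $\R_1$. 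If $\X$ does not occur on that orbit, the $\psi_1'$-orbit of $j$ coincides with it; if $\X$ occurs, the orbit has the form $j\to\cdots\to\X\to\Y\to\Z\to\cdots$, and the $\psi_1'$-orbit follows it up to $\X$, takes the shortcut $\X\to\Z$ (here I \emph{splice out} the step $\X\to\Y$, using $\psi_1(\Y)=\Z$), and from $\Z$ onward again coincides with the $\psi_1$-orbit, because $\psi_1$ and $\psi_1'$ differ only at $\X$ and $\X$ cannot recur on an acyclic orbit. Either way the $\psi_1'$-orbit ends in $\R_1$ and is acyclic. The converse is symmetric: the $\psi_1$-orbit is recovered from the $\psi_1'$-orbit by re-inserting $\Y$ between $\X$ and $\Z$ wherever $\X$ occurs, which preserves acyclicity (since $\Y\notin\R_1$, and on an acyclic $\psi_1'$-orbit at most one of $\X,\Y$ can occur, as each has successor $\Z$) and termination in $\R_1$; the cases $j=\X,\Y$ are subsumed. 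Since $\psi_2$ is built only from row $2$ and $\R_2$, which are untouched, its forest condition is unaffected, and the two functional digraphs have the same vertex set because $\Y\in\H$ and $\Z$ is an image under both $\psi_1$ and $\psi_1'$. This gives the required equivalence of forest conditions, hence the bijection and the equality of counts.

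Finally, each of the balance, non-empty, and full conditions is determined solely by $\mathbf w$, $\R_1$, $\R_2$, and the set $\H$ of columns carrying an arrow-tail, and all four of these are identical for $\Gamma$ and $\Gamma'$ because only the value $\phi(\X)$, not the domain of $\phi$, was changed; the three equivalences follow at once. The main obstacle I anticipate is purely bookkeeping: tracking the positions of $\X,\Y,\Z$ on the $\psi_1$- and $\psi_1'$-orbits and confirming that the re-routing creates no new cycle and no premature sink. Conceptually this is the standard fact that contracting an edge along a directed path of a functional graph preserves being a forest rooted at a prescribed set, but it must be checked against the exact definition of the forest condition function for arrowed arrays; this mirrors, and is slightly simpler than, the proof of \ref{lemma:Arrow Simplification Substructure-1}, where in addition a column is moved into $\R_1$.
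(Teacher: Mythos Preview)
Your proposal is correct and is the natural proof: take the identity on the underlying pairing and verify that the row~$1$ forest condition is invariant under the re-routing $\X\to\Y\to\Z\rightsquigarrow\X\to\Z$, while $\psi_2$ and the balance/non-empty/full conditions depend only on data that is unchanged. The paper itself does not give a proof here but defers to \cite{Chan:2017-1}; the argument there is exactly this one, so your write-up matches the intended approach.
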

Similarly, the proof of this lemma can be adapted to show that number
of arrowed arrays satisfying $\Delta=\left(\mathbf{w},\R_{1},\phi\right)$
and $\Delta^{\prime}=\left(\mathbf{w},\R_{1},\phi^{\prime}\right)$
are equal, and the same for the number of arrowed arrays satisfying
$\Lambda=\left(\mathbf{x},\P,\phi\right)$ and $\Lambda^{\prime}=\left(\mathbf{y},\P,\phi^{\prime}\right)$.

Collectively, these are the \emph{arrow simplification lemmas}, and
pictures describing the applications of these lemmas can be found
in \ref{fig:Arrow Simplification-1} and \ref{fig:Arrow Simplification-2}.
Furthermore, applying these lemmas to the array in \ref{fig:Arrowed Array}
gives us \ref{fig:Irreducible Arrowed Array}. Note that these lemmas
can be applied repeatedly to simplify a substructure, until either
all arrow-heads are in cells that are unmarked and have no arrow-tails,
or an arrow-head is in the same cell as its own arrow-tail. We are
only interested in the former, as the latter implies that there is
a cycle in the functional digraph of $\phi$, which violates the forest
condition. This gives rise to the following definition.
\begin{defn}
\label{def:Irreducible Substructure Lambda}A substructure $\Theta$
is \emph{irreducible} if the functional digraph of $\phi$ is acyclic,
and $\Theta$ cannot be further simplified with the application of
arrow simplification lemmas. Any cell of an irreducible substructure
containing an arrow-head must be unmarked in row 1, and cannot contain
an arrow-tail. Furthermore, it follows from definition that if an
irreducible substructure satisfies the full condition, then any cell
containing an arrow-head must also contain a critical vertex in row
1.
\end{defn}
Note that for substructure $\Lambda$, only the second arrow simplification
lemma applies. Furthermore, for substructure $\Lambda=\left(\mathbf{x},\P,\phi\right)$,
the arrow-heads must be in cells of $\H\cup\P$, but they cannot be
in $\H$ for $\Lambda$ to be irreducible. Hence, if $\Lambda$ is
irreducible, then $\phi$ must be a function from $\H$ to $\P$.

\begin{figure}
\begin{centering}
\includegraphics{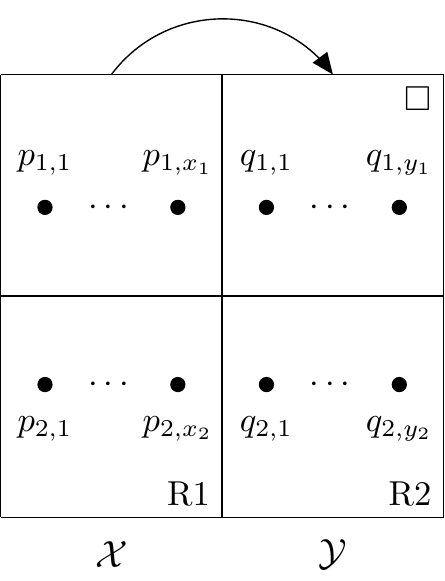}$\qquad\qquad\qquad$\includegraphics{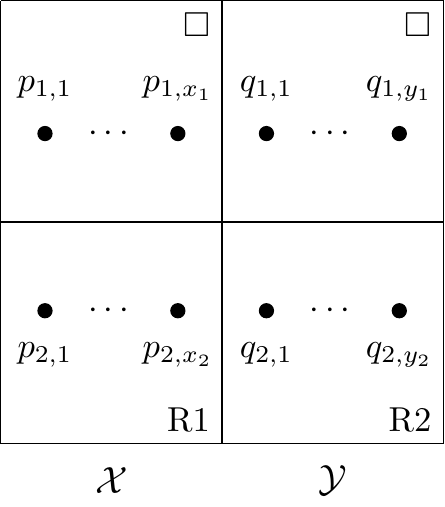}
\par\end{centering}
By applying the arrow simplification procedure to the left figure,
we arrive at the right figure. R1 and R2 can be arbitrary in whether
they are marked, but they must be the same between the two figures.

\caption{\label{fig:Arrow Simplification-1}Arrow Simplification 1}
\end{figure}

\begin{figure}
\begin{centering}
\includegraphics{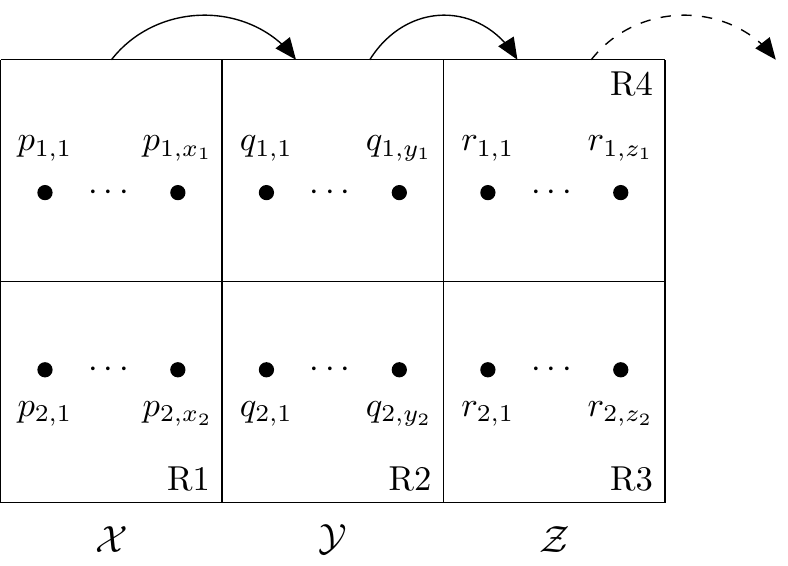}
\par\end{centering}
\begin{centering}
\includegraphics{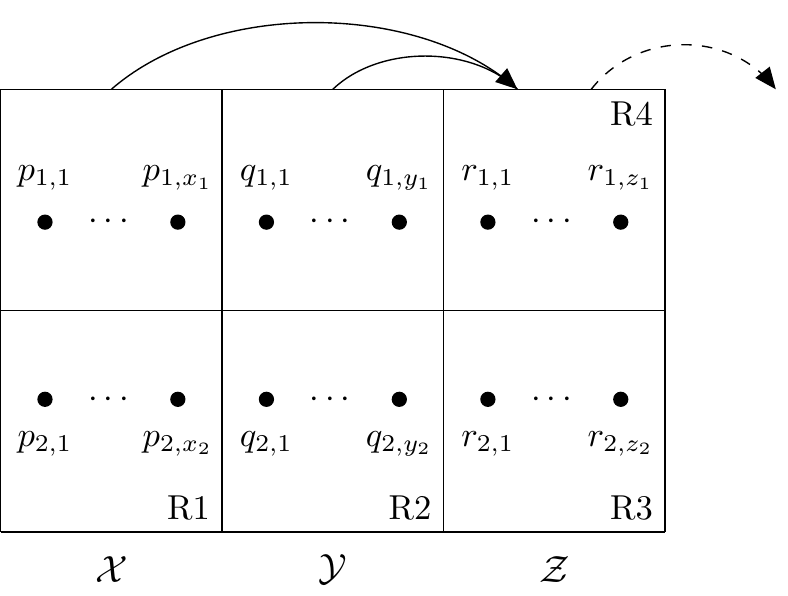}
\par\end{centering}
By applying the arrow simplification procedure to the top figure,
we arrive at the bottom figure. R1, R2, R3, and R4 can be arbitrary
in whether they are marked, but they must be the same between the
two figures. The same holds for the optional arrow with $\Z$ as its
tail.

\caption{\label{fig:Arrow Simplification-2}Arrow Simplification 2}
\end{figure}

\begin{figure}
\begin{centering}
\includegraphics{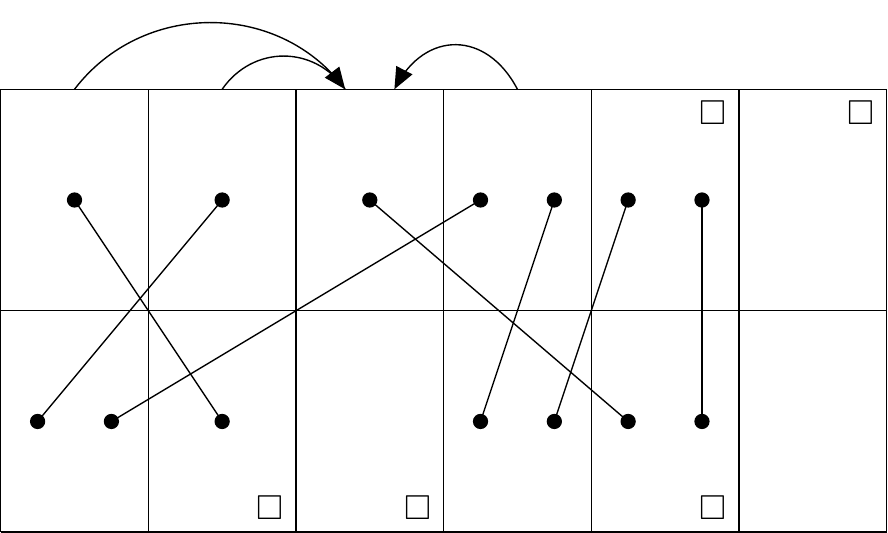}
\par\end{centering}
\caption{\label{fig:Irreducible Arrowed Array}Simplification of the arrowed
array in \ref{fig:Arrowed Array} into an irreducible array}
\end{figure}

\begin{defn}
\label{def:Column type definition}If $\Gamma=\left(\mathbf{w},\R_{1},\R_{2},\phi\right)$
is an irreducible substructure, then we can categorize the columns
of $\Gamma$ as follows: Let $\A,\B,\C,\D$ be a partition of the
columns of $\K\backslash\H$, where

\begin{itemize}
\item Columns in $\A$ have both row 1 and row 2 unmarked
\item Columns in $\B$ have row 1 marked and row 2 unmarked
\item Columns in $\C$ have row 1 unmarked and row 2 marked
\item Columns in $\D$ have both row 1 and row 2 marked
\end{itemize}
Furthermore, if $\X$ is a column or a set of columns, let $\XB$
and $\XP$ be the sets of columns that have arrows pointing to $\X$,
and that have row 2 unmarked and marked, respectively. In particular,
$\AB$ and $\AP$ denotes the sets of columns pointing to $\A$, and
$\CB$ and $\CP$ denotes the sets of columns pointing to $\C$, with
row 2 unmarked and marked, respectively. These sets of columns implicitly
defined by $\Gamma$ are referred to as \emph{column types}, and a
diagram with all the column types can be found in \ref{fig:Column Types}.
\end{defn}
\begin{figure}
\begin{centering}
\includegraphics{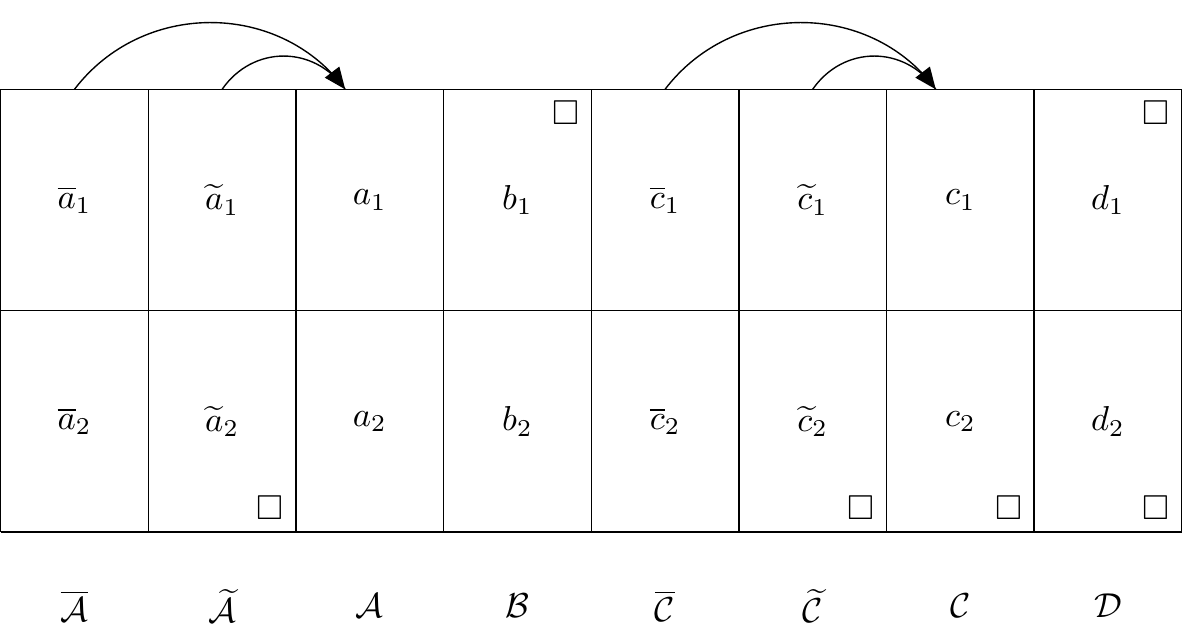}
\par\end{centering}
\caption{\label{fig:Column Types}Column types and variables for the number
of vertices}
\end{figure}

These eight column types form a partition of $\K$ on irreducible
substructures, and knowing the number of columns and the number of
vertices for each column type of $\Gamma$ is sufficient to count
the number of arrowed arrays satisfying it.

\section{\label{sec:Substructure Gamma Formula}Enumeration of Substructures}

Now, we have everything we need to provide formulas for the number
of arrowed arrays satisfying the substructures defined in \ref{def:Substructure}.
The first formula is proved in \cite{Chan:2017-1}, and enumerates
arrays satisfying substructure $\Gamma$.
\begin{thm}
\label{thm:Substructure general formula}Given an irreducible substructure
$\Gamma=\left(\mathbf{w},\R_{1},\R_{2},\phi\right)$ that satisfies
the full condition with $s\ge A+2$, the number of arrowed arrays
$\left(\alpha,\phi\right)\in\AR{K}{R_{1}}{R_{2}}{s}$ that satisfy
$\Gamma$ is given by the formula
\[
T\left(\Gamma\right)=\left(s-1\right)!\left[\frac{\left(b_{2}+d_{2}\right)\left(\ap_{1}+c_{1}+\cp_{1}+d_{1}\right)}{s-A}+\frac{b_{1}\left(c_{2}+\cb_{2}+\cp_{2}\right)-\cb_{1}\left(b_{2}+d_{2}\right)}{\left(s-A\right)\left(s-A-1\right)}\right]
\]
In the case where $s=A+1$, the formula reduces to 
\[
T\left(\Gamma\right)=\left(s-1\right)!\left(b_{2}+d_{2}\right)\left(\ap_{1}+c_{1}+\cp_{1}+d_{1}\right)
\]
\end{thm}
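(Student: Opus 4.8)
The plan is to obtain $T(\Gamma)$ as a direct count of pairings. Because $\Gamma=(\mathbf{w},\R_{1},\R_{2},\phi)$ already fixes the cell-occupancies, both sets of marked columns, and the arrow function, an element of $\AR{K}{R_{1}}{R_{2}}{s}$ satisfying $\Gamma$ is determined by its pairing $\mu$ alone, so $T(\Gamma)$ is the number of perfect matchings between the $s$ vertices of row $1$ and the $s$ vertices of row $2$ for which $(\alpha,\phi)$ satisfies the forest condition (the full condition being part of the hypothesis). The first step is to pin down the critical vertices. For an irreducible $\Gamma$ one has $\R_{1}=\B\cup\D$ and $\R_{2}=\C\cup\D\cup\AP\cup\CP$, and the columns of $\H$ carry an arrow-tail, so the critical row-$1$ vertices are exactly the rightmost vertices of the columns of types $\A$ and $\C$, and the critical row-$2$ vertices are exactly the rightmost vertices of the columns of types $\A$, $\B$, $\AB$ and $\CB$; by the full condition every one of these cells is nonempty, while every remaining vertex is irrelevant to the forest condition.

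Next I would reformulate. The partners of the critical row-$1$ vertices supply the values of $\psi_{1}$ on the columns of type $\A\cup\C$ (the function $\psi_{1}$ already incorporating the fixed arrows $\phi$ on $\H$), and for $(\alpha,\phi)$ to lie in $\AR{K}{R_{1}}{R_{2}}{s}$ this digraph must be a forest rooted at $\R_{1}$; dually the partners of the critical row-$2$ vertices supply $\psi_{2}$ on the columns of type $\A\cup\B\cup\AB\cup\CB$, which must be a forest rooted at $\R_{2}$. Hence $T(\Gamma)$ is a sum over admissible ``routings'' --- assignments to each critical vertex of the column containing its partner that make both digraphs forests --- of the number of matchings realising the routing. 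That number is a product of factorial factors recording, column by column, how many vertices are available to receive the incoming critical edges and in how many orders, times the free matching of the remaining vertices, subject to the single nonlocal constraint that a critical row-$1$ vertex matched to the critical row-$2$ vertex of the column it points to pins the two routings together. Summing the local counts against this weighted forest enumeration --- the weight of routing a critical vertex into a column being the number of slots there that can receive it --- yields $T(\Gamma)$, and I would evaluate the sum by the leaf-removal induction of \cite{Chan:2017-1} (an outgrowth of the technique of Goulden and Slofstra \cite{Goulden-Slofstra:2010}), deleting one leaf column of the combined forest at a time so that the number of admissible choices is explicit at each step.

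The main obstacle I expect is precisely the coupling of the two rows: a critical row-$1$ vertex in column $j$ is matched to \emph{some} row-$2$ vertex of column $\psi_{1}(j)$, and whether or not that vertex is the rightmost one of its column alters the row-$2$ digraph, so the two forest conditions genuinely interact and resist a naive product decomposition; arranging the leaf-removal order to respect both digraphs simultaneously is the delicate point. Granting that, the factor $(s-1)!$ emerges as the number of orders in which the peeling can be carried out once the top (root) edge of the combined forest is fixed, and the two-term shape of the bracket --- with denominators $s-A$ and $(s-A)(s-A-1)$ --- comes from a case split on whether two distinguished critical vertices are routed into the same column or into two different ones, reflecting a first, resp.\ a second, selection from a pool of size $s-A$. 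The same case split accounts for the boundary: when $s=A+1$ the pool has size $1$, the ``two different columns'' case is vacuous, the second term drops, and one is left with $T(\Gamma)=(s-1)!\,(b_{2}+d_{2})(\ap_{1}+c_{1}+\cp_{1}+d_{1})$. As consistency checks I would confirm that the two sides of \ref{lemma:Arrow Simplification Substructure-1} and \ref{lemma:Arrow Simplification Substructure-2} agree after passing to the corresponding $\Gamma$'s, and test small values of $s$.
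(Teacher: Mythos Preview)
The paper does not prove this theorem; immediately before stating it the author writes ``The first formula is proved in \cite{Chan:2017-1}'', and no argument is given here. So there is no in-paper proof to compare your proposal against.

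That said, your outline is consistent with the framework set up in the present paper and in Goulden--Slofstra: since $\Gamma$ fixes everything except the matching, $T(\Gamma)$ is indeed the number of row-1/row-2 perfect matchings making both $\psi_{1}$ and $\psi_{2}$ forests rooted at $\R_{1}$ and $\R_{2}$; your identification of the critical vertices in each row by column type is correct for an irreducible $\Gamma$ under the full condition; and reducing to a weighted enumeration of pairs of coupled rooted forests is the right reformulation. Your remark that the coupling of the two rows is the main obstacle is exactly the point, and is precisely what the cited paper has to resolve.

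Where your proposal remains a sketch rather than a proof is in the actual evaluation. Saying you would ``evaluate the sum by the leaf-removal induction of \cite{Chan:2017-1}'' is close to invoking the very result being proved, since that is where the formula is established. The heuristics you give for the shape of the answer --- $(s-1)!$ as an ordering count, the two bracketed terms as a same-column/different-column split on a pool of size $s-A$ --- are plausible mnemonics but not a derivation: in particular you have not specified which ``two distinguished critical vertices'' drive the split, nor why the numerators come out as the specific bilinear forms $(b_{2}+d_{2})(\ap_{1}+c_{1}+\cp_{1}+d_{1})$ and $b_{1}(c_{2}+\cb_{2}+\cp_{2})-\cb_{1}(b_{2}+d_{2})$, including the sign. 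To turn this into a proof you would need to make the induction explicit (state the quantity being inducted on, the leaf-deletion step on the coupled forest, and how the column-type vertex counts update), and then check that the closed form satisfies the resulting recursion. Your proposed consistency checks against the arrow-simplification lemmas are sensible sanity tests but do not substitute for that argument.
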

By the convention set out in \ref{conv:Array Convention}, we let
a lower case variable $x_{i}$ represent the total number of points
in row $i$ of the columns of type $\X$, and $A$ represent the number
of columns of type $\A$. For convenience, we will drop the subscripts
of the formula from here on, as we will only deal with arrays that
satisfy the balance condition.

Next, we provide a formula for substructure $\Delta$. This substructure
allows us to mark the cells of row 2 arbitrarily, while keeping the
positions of the marked cells in row 1 and the vertices fixed.
\begin{thm}
\label{thm:Substructure Delta General Formula}Let $R_{1},R_{2}\ge1$,
and let $\Delta=\left(\mathbf{w},\R_{1},\phi\right)$ be an irreducible
substructure that satisfies the balance condition. Furthermore, suppose
$w_{j}>0$ for $1\le j\le K$. Then, the number of arrowed arrays
$\left(\alpha,\phi\right)\in\AR{K}{R_{1}}{R_{2}}{s}$ with substructure
$\Delta$ is given by the formula
\[
T\left(\Delta\right)=s!\sum_{A=0}^{s-1}\frac{r}{s-A}\binom{M}{M-A}\binom{K-M-1}{R_{2}-M+A-1}
\]
where $r$ is the total number of vertices in row 1 of the columns
of $\R_{1}$, and $M$ is the number of columns that contain a critical
vertex in row 1.
\end{thm}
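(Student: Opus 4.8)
The plan is to express the count for $\Delta$ as a sum, over all choices of the row-$2$ marking, of counts for substructures of the form $\Gamma=(\mathbf{w},\R_1,\R_2,\phi)$, to evaluate each of those with \ref{thm:Substructure general formula}, and then to collapse the resulting double sum by a binomial reindexing. For each $R_2$-subset $\R_2\subseteq\K$ put $\Gamma_{\R_2}=(\mathbf{w},\R_1,\R_2,\phi)$. Since $w_j>0$ for every $j$, every cell contains a vertex, so $\Gamma_{\R_2}$ satisfies the full condition; and since $\Delta$ is irreducible --- so $\phi$ is acyclic and no arrow of $\phi$ points into $\R_1$ or into $\H$ --- the substructure $\Gamma_{\R_2}$ is irreducible too, the row-$2$ marking being irrelevant to irreducibility. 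The hypothesis $w_j>0$ also forces the columns carrying a critical vertex in row $1$ to be exactly $\K\setminus(\R_1\cup\H)$, a set of size $M=K-R_1-|\H|$; since $s=\sum_j w_j\ge K>M$ we have $s\ge A+1$ for every attainable value of the number $A$ of type-$\A$ columns, so \ref{thm:Substructure general formula} applies to all the $\Gamma_{\R_2}$. These substructures partition $\Delta$, so $T(\Delta)=\sum_{\R_2}T(\Gamma_{\R_2})$.

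Next I would reorganize this sum. An admissible $\R_2$ is encoded by a pair $(S,U)$: the set $S\subseteq\K\setminus(\R_1\cup\H)$ of critical columns left unmarked in row $2$ --- these are exactly the type-$\A$ columns, so $A=|S|$ --- and $U=\R_2\cap(\R_1\cup\H)$, with $|U|=R_2-(M-A)=R_2-M+A$. In $\Gamma_{S,U}$ one has $\B\cup\D=\R_1$, so $b+d$ equals the fixed number $r$; the arrows of $\H$ pointing to $\A$ are those in $\H_S=\{j\in\H:\phi(j)\in S\}$, so $c$, $\cb+\cp$ and $h_S:=\sum_{j\in\H_S}w_j$ depend only on $S$. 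Thus the formula of \ref{thm:Substructure general formula} reads $T(\Gamma_{S,U})=(s-1)!\bigl[\tfrac{r(\ap+c+\cp+d)}{s-A}+\tfrac{b(c+\cb+\cp)-\cb\,r}{(s-A)(s-A-1)}\bigr]$. Summing over the $U$ of the prescribed size is a routine subset count: with $h=\sum_{j\in\H}w_j$, $X_A=\binom{K-M-1}{R_2-M+A}$ and $Y_A=\binom{K-M-1}{R_2-M+A-1}$ (so $\binom{K-M}{R_2-M+A}=X_A+Y_A$) one gets $\sum_U d=rY_A$, $\sum_U\ap=h_SY_A$, $\sum_U\cp=(h-h_S)Y_A$, $\sum_U b=rX_A$, $\sum_U\cb=(h-h_S)X_A$, $\sum_U c=c(X_A+Y_A)$, whence after cancellation $\sum_U T(\Gamma_{S,U})=\tfrac{(s-1)!\,r}{s-A}\bigl[(h+r)Y_A+c(X_A+Y_A)+\tfrac{cX_A}{s-A-1}\bigr]$.

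The only remaining $S$-dependence is $c=m^*-\sum_{j\in S}w_j$ with $m^*=\sum_{j\notin\R_1\cup\H}w_j=s-r-h$; since $\sum_{|S|=A}\sum_{j\in S}w_j=m^*\binom{M-1}{A-1}$ we get $\sum_{|S|=A}c=m^*\binom{M-1}{A}$, and the term $\tfrac{cX_A}{s-A-1}$ survives only for $A\le M-1$ (where $s-A-1\ge s-M\ge1$, so no division by zero) because $\binom{M-1}{A}=0$ otherwise. The theorem then amounts to
\[
(s-1)!\,r\sum_A\frac{1}{s-A}\left[(h+r)\binom{M}{A}Y_A+m^*\binom{M-1}{A}(X_A+Y_A)+\frac{m^*\binom{M-1}{A}X_A}{s-A-1}\right]=s!\,r\sum_A\frac{\binom{M}{M-A}Y_A}{s-A}.
\]
Dividing by $(s-1)!\,r$, substituting $s=h+r+m^*$, and applying $\binom{M}{A}=\binom{M-1}{A}+\binom{M-1}{A-1}$ reduces this to the surviving $m^*\binom{M-1}{A}$-terms, and after combining $\tfrac1{s-A}+\tfrac1{(s-A)(s-A-1)}=\tfrac1{s-A-1}$ it becomes $\sum_A\tfrac{\binom{M-1}{A}X_A}{s-A-1}=\sum_A\tfrac{\binom{M-1}{A-1}Y_A}{s-A}$, which holds term by term under the substitution $A\mapsto A+1$ because $Y_{A+1}=X_A$.

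The step I expect to be the main obstacle --- and the one that cost me the most effort above --- is conceptual: one cannot prove the theorem $\Delta_A$ by $\Delta_A$, because $s!\,r\,(s-A)^{-1}\binom{M}{M-A}\binom{K-M-1}{R_2-M+A-1}$ is \emph{not} $T(\Delta_A)$; the cancellations close up only after the full sum over $A$ is assembled and the binomials are reindexed. A lesser nuisance is the eight-type column bookkeeping needed to confirm that $b+d$, $c$ and $\cb+\cp$ are the correct frozen quantities as $U$ varies, together with checking that the $s=A+1$ case of \ref{thm:Substructure general formula} is exactly the $c=0$ specialization of the generic formula (with the last fraction absorbed), so that no separate boundary argument is required.
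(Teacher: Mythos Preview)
Your argument is correct and follows essentially the same strategy as the paper: both proofs sum the formula of \ref{thm:Substructure general formula} over all refinements $\Gamma\hookrightarrow\Delta$, group by the number $A$ of type-$\A$ columns, and close with the same telescoping identity $Y_{A+1}=X_A$ (in the paper this appears as the shift that cancels $T_{m1}+T_{3+4}$ against $T_{m2}$). The main organizational difference is that the paper splits $T(\Gamma)$ into four named pieces $T_1,\dots,T_4$ and evaluates each sum via a vertex-contribution count, whereas you first fix the set $S$ of type-$\A$ columns and sum over the residual marking $U\subseteq\R_1\cup\H$, exploiting the observation that $c+\cb+\cp=c+h-h_S$ is $U$-independent so the second fraction collapses to $\tfrac{rcX_A}{(s-A)(s-A-1)}$ immediately. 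Your route is a bit more streamlined algebraically; the paper's route makes the combinatorial meaning of each piece more visible. Your remark that the claimed formula is \emph{not} the count of any single $\Delta_A$ is exactly right and mirrors the paper's need to assemble $T_r,T_{m1},T_{m2},T_{3+4}$ before the cancellation fires.
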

\begin{proof}
To prove this theorem, we sum $T\left(\Gamma\right)$ over all substructures
$\Gamma=\left(\mathbf{w},\R_{1},\R_{2},\phi\right)$ that are refinements
of $\Delta$. Since $\Delta$ satisfies the balance conditions and
$w_{j}>0$ for $1\le j\le K$, all substructures $\Gamma$ satisfy
the full condition, so we can use the formula of $T\left(\Gamma\right)$
given by \ref{thm:Substructure general formula}. Note that $T\left(\Gamma\right)$
only depends on the number of columns of type $\A$, even though it
depends on the number of vertices of other column types. Therefore,
we first sum over all $\Gamma$ with $A$ columns of type $\A$ to
obtain $T\left(\Delta_{A}\right)$, then we sum $A$ from 0 to $s-1$
to obtain $T\left(\Delta\right)$. As $\Delta$ satisfies the balance
condition, so must all $\Gamma$ that are refinements of $\Delta$.
This implies that we can drop the subscripts from $T\left(\Gamma\right)$.

Let $\M$ be the set of columns that contains a critical vertex in
row 1, and $\H$ be the set of columns that contains an arrow-tail.
Then, $\R_{1}$, $\M$, and $\H$ partitions $\K$. As $R_{1}\ge1$,
we have $M<K$. In the case where $M=0$, we have $\M=\H=\emptyset$
and $\R_{1}=\K$. Therefore, by simplifying and substituting in the
formula for $T\left(\Gamma\right)$, we have 
\[
T\left(\Delta\right)=\sum_{\Gamma\hookrightarrow\Delta}d\left(s-1\right)!
\]
Note that a vertex $v$ in cell $\left(1,\X\right)$ contributes to
$d$ if $\X$ is marked in row 2. As there are $\binom{K-1}{R_{2}-1}$
ways to mark the columns of $\K$ in row 2 with $\X$ marked, and
$s$ vertices in row 1, we have
\begin{eqnarray*}
T\left(\Delta\right) & = & s!\binom{K-1}{R_{2}-1}
\end{eqnarray*}
This result agrees with substituting $M=A=0$ into the formula for
$T\left(\Delta\right)$.

In the case where $1\le M\le K-1$, we have $\R_{1}=\B\cup\D$. This
gives us $r=b+d$, and allows us to rewrite $T\left(\Gamma\right)$
as 
\begin{eqnarray*}
T\left(\Gamma\right) & = & \left(s-1\right)!\left(T_{1}\left(\Gamma\right)+T_{2}\left(\Gamma\right)+T_{3}\left(\Gamma\right)+T_{4}\left(\Gamma\right)\right)
\end{eqnarray*}
where 
\begin{align*}
T_{1}\left(\Gamma\right) & =\frac{rc}{s-A} & T_{2}\left(\Gamma\right) & =\frac{r\left(\ap+\cp+d\right)}{s-A}\\
T_{3}\left(\Gamma\right) & =\frac{b\left(c+\cb+\cp\right)}{\left(s-A\right)\left(s-A-1\right)} & T_{4}\left(\Gamma\right) & =-\frac{r\cb}{\left(s-A\right)\left(s-A-1\right)}
\end{align*}
for $0\le A\le s-2$, with $T_{3}\left(\Gamma\right)=T_{4}\left(\Gamma\right)=0$
for $s=A-1$. As the substructures $\Gamma=\left(\mathbf{w},\R_{1},\R_{2},\phi\right)$
with $A$ columns of type $\A$ partitions $\Delta_{A}$, we can let
$T_{i}\left(\Delta_{A}\right)=\sum_{\Gamma\hookrightarrow\Delta_{A}}T_{i}\left(\Gamma\right)$
for $i=1,2,3,4$, which gives us 
\[
T\left(\Delta\right)=\left(s-1\right)!\left(\sum_{A=0}^{s-1}\left(T_{1}\left(\Delta_{A}\right)+T_{2}\left(\Delta_{A}\right)\right)+\sum_{A=0}^{s-2}\left(T_{3}\left(\Delta_{A}\right)+T_{4}\left(\Delta_{A}\right)\right)\right)
\]

To evaluate each of the $T_{i}\left(\Delta_{A}\right)$, we look at
the number of substructures $\Gamma$ such that a vertex or a pair
of vertices contributes to the numerator of $T_{i}\left(\Delta_{A}\right)$.
Note that we can ignore $r$ since it is the number of vertices in
$\R_{1}$, which is a constant with respect to $\Delta$. Of the three
sets of columns, only the columns of $\M$ can become columns of type
$\A$. Therefore, if a substructure $\Gamma$ is a refinement of $\Delta_{A}$,
it must have exactly $M-A$ marked cells in row 2 of $\M$. It must
also have exactly $R_{2}-M+A$ marked cells in row 2 of $\R_{1}\cup\H$.
This means in total, there are $\binom{M}{M-A}\binom{K-M}{R_{2}-M+A}$
substructures of the form $\Gamma=\left(\mathbf{w},\R_{1},\R_{2},\phi\right)$
that are refinements of $\Delta_{A}$.

Now, a vertex $v$ in row 1 of a column $\X$ contributes to $c$
if $\X\in\M$ and $\X$ is marked in row 2. As there are $\binom{M-1}{M-A-1}$
ways to mark the columns of $\M$ in row 2 with $\X$ marked, and
$\binom{K-M}{R_{2}-M+A}$ ways to mark the columns of $\K\backslash\M$,
$v$ contributes $\binom{M-1}{M-A-1}\binom{K-M}{R_{2}-M+A}$ times
to $c$. Let $m$ be the total number of vertices in $\M$, we have
\begin{eqnarray*}
T_{1}\left(\Delta_{A}\right) & = & T_{1}\left(\Gamma\right)\cdot\frac{m}{c}\binom{M-1}{M-A-1}\binom{K-M}{R_{2}-M+A}\\
 & = & \frac{rm}{s-A}\binom{M-1}{M-A-1}\binom{K-M}{R_{2}-M+A}
\end{eqnarray*}

Next, a vertex $v$ in row 1 of a column $\X$ contributes to $\ap+\cp+d$
if $\X\in\K\backslash\M$ and $\X$ is marked in row 2. As there are
$\binom{K-M-1}{R_{2}-M+A-1}$ ways to mark the columns of $\K\backslash\M$
in row 2 with $\X$ marked, and $\binom{M}{M-A}$ ways to mark the
columns of $\M$, $v$ contributes $\binom{M}{M-A}\binom{K-M-1}{R_{2}-M+A-1}$
times to $\ap+\cp+d$. Given that there are $s-m$ vertices in $\K\backslash\M$,
we have
\begin{eqnarray*}
T_{2}\left(\Delta_{A}\right) & = & \frac{r\left(s-m\right)}{s-A}\binom{M}{M-A}\binom{K-M-1}{R_{2}-M+A-1}
\end{eqnarray*}

Similarly, let $\left\{ v,u\right\} $ be a pair of vertices with
$v$ in row 1 of a column $\X$ and $u$ in row 2 of a column $\Y$.
Then, $\left\{ v,u\right\} $ contributes to $b\left(c+\cb+\cp\right)$
if the following conditions hold. First, we have $\X\in\R_{1}$, $\Y\in\K\backslash\R_{1}$,
and $\X$ unmarked in row 2. Furthermore, let $\Z$ be the column
$\Y$ if $\Y\in\M$, and $\Z$ be the column that $\Y$ points to
if $\Y\in\H$. Then, $\Z$ must be a column of $\M$ and must also
be marked. Now, as there are $\binom{M-1}{M-A-1}$ ways to mark the
columns of $\M$ with $\Z$ marked, and $\binom{K-M-1}{R_{2}-M+A}$
ways to mark the columns of $\K\backslash\M$ in row 2 with $\X$
unmarked, $\left\{ v,u\right\} $ contributes $\binom{M-1}{M-A-1}\binom{K-M-1}{R_{2}-M+A}$
times to $b\left(c+\cb+\cp\right)$. Given that there are $r\left(s-r\right)$
such pairs of $\left\{ v,u\right\} $, we have
\begin{eqnarray*}
T_{3}\left(\Delta_{A}\right) & = & \frac{r\left(s-r\right)}{\left(s-A\right)\left(s-A-1\right)}\binom{M-1}{M-A-1}\binom{K-M-1}{R_{2}-M+A}
\end{eqnarray*}

Finally, a vertex $v$ in row 1 of a column $\X$ contributes to $\cb$
if $\X\in\H$, $\X$ is unmarked in row 2, and the column $\Z$ that
$\X$ points to is marked in row 2. As there are $\binom{K-M-1}{R_{2}-M+A}$
ways to mark the columns of $\K\backslash\M$ in row 2 with $\X$
unmarked, and $\binom{M-1}{M-A-1}$ ways to mark the columns of $\M$
with $\Z$ marked, $v$ contributes $\binom{M-1}{M-A-1}\binom{K-M-1}{R_{2}-M+A-1}$
times to $\cb$. Given that there are $s-m-r$ vertices in $\H$,
we have
\begin{eqnarray*}
T_{4}\left(\Delta_{A}\right) & = & -\frac{r\left(s-m-r\right)}{\left(s-A\right)\left(s-A-1\right)}\binom{M-1}{M-A-1}\binom{K-M-1}{R_{2}-M+A}
\end{eqnarray*}

Now, let $T_{3+4}\left(\Delta_{A}\right)=T_{3}\left(\Delta_{A}\right)+T_{4}\left(\Delta_{A}\right)$,
and observe that 
\begin{eqnarray*}
T_{3+4}\left(\Delta_{A}\right) & = & \frac{rm}{\left(s-A\right)\left(s-A-1\right)}\binom{M-1}{M-A-1}\binom{K-M-1}{R_{2}-M+A}
\end{eqnarray*}
and 
\begin{eqnarray*}
T_{1}\left(\Delta_{A}\right)+T_{2}\left(\Delta_{A}\right) & = & T_{r}\left(\Delta_{A}\right)+T_{m1}\left(\Delta_{A}\right)+T_{m2}\left(\Delta_{A}\right)
\end{eqnarray*}
where
\begin{eqnarray*}
T_{r}\left(\Delta_{A}\right) & = & \frac{rs}{s-A}\binom{M}{M-A}\binom{K-M-1}{R_{2}-M+A-1}\\
T_{m1}\left(\Delta_{A}\right) & = & \frac{rm}{s-A}\binom{M-1}{M-A-1}\binom{K-M-1}{R_{2}-M+A}\\
T_{m2}\left(\Delta_{A}\right) & = & -\frac{rm}{s-A}\binom{M-1}{M-A}\binom{K-M-1}{R_{2}-M+A-1}
\end{eqnarray*}
By substituting these formulas into $T\left(\Delta\right)$, we have
\[
T\left(\Delta\right)=\left(s-1\right)!\left(\sum_{A=0}^{s-1}\left(T_{r}\left(\Delta_{A}\right)+T_{m1}\left(\Delta_{A}\right)+T_{m2}\left(\Delta_{A}\right)\right)+\sum_{A=0}^{s-2}T_{3+4}\left(\Delta_{A}\right)\right)
\]
Next, we will show that $\sum_{A=0}^{s-1}\left(T_{m1}\left(\Delta_{A}\right)+T_{m2}\left(\Delta_{A}\right)\right)+\sum_{A=0}^{s-2}T_{3+4}\left(\Delta_{A}\right)=0$.
Note that 
\begin{eqnarray*}
T_{m1}\left(\Delta_{A}\right)+T_{3+4}\left(\Delta_{A}\right) & = & \frac{rm}{s-A-1}\binom{M-1}{M-A-1}\binom{K-M-1}{R_{2}-M+A}
\end{eqnarray*}
for $0\le A\le s-2$. Therefore, by shifting the index of $\sum_{A=0}^{s-1}T_{m2}\left(\Delta_{A}\right)$
by one and noting that $T_{m2}\left(\Delta_{0}\right)=0$, we have
\begin{eqnarray*}
 &  & \sum_{A=0}^{s-1}\left(T_{m1}\left(\Delta_{A}\right)+T_{m2}\left(\Delta_{A}\right)\right)+\sum_{A=0}^{s-2}T_{3+4}\left(\Delta_{A}\right)\\
 & = & rm\binom{M-1}{M-s}\binom{K-M-1}{R_{2}-M+s-1}
\end{eqnarray*}
Now, for $\binom{M-1}{M-s}$ to be non-zero, we require $M\ge s$.
However, this implies that there are at least $s$ columns of $\M$,
each requiring a critical vertex. As there are only $s$ vertices
in row 1, $r$ is forced to be 0. Therefore, the entire sum is equal
to zero regardless of the value of $M$. Substituting this result
back into $T\left(\Delta\right)$, we obtain
\begin{eqnarray*}
T\left(\Delta\right) & = & \left(s-1\right)!\sum_{A=0}^{s-1}T_{r}\left(\Delta_{A}\right)\\
 & = & s!\sum_{A=0}^{s-1}\frac{r}{s-A}\binom{M}{M-A}\binom{K-M-1}{R_{2}-M+A-1}
\end{eqnarray*}
This proves our formula for $T\left(\Delta\right)$.
\end{proof}
Recall from \ref{def:Paired Array Conditions} that proper vertical
arrays do not require a vertex in each cell, so the formula in \ref{thm:Substructure Delta General Formula}
does not apply to all arrow arrays that will result in our subsequent
decomposition. In our previous paper \cite{Chan:2017-1}, which covers
the case $n=2$, we only needed the formula for two-row vertical arrays,
equivalently arrowed arrays without arrows. Hence, we bypassed this
issue by removing the columns with no vertices, then summed over all
possible ways to add the empty columns. However, that approach does
not work here, as arrowed arrays may have arrows in columns that are
otherwise empty. Therefore, we need to extend \ref{thm:Substructure Delta General Formula}
to cover a wider range of arrowed arrays.
\begin{defn}
\label{def:Admissible Substructure}An irreducible substructure $\Delta=\left(\mathbf{w},\R_{1},\phi\right)$
is \emph{admissible} if each cell that contains an arrow-head also
contains at least one vertex. This means that if an arrowed array
$\left(\alpha,\phi\right)$ satisfies an admissible substructure $\Delta$,
then $\psi_{i}\left(j\right)$ must have at least one vertex. In particular,
the only way to violate the forest condition of row $i$ is for there
to be a cycle in the functional digraph of $\psi_{i}$.
\end{defn}
Note that the definition of \emph{admissible} for substructure $\Delta$
is compatible with the definition of irreducible for substructure
$\Lambda$. That is, if $\Lambda=\left(\mathbf{x},\P,\phi\right)$
is an irreducible substructure and $\Delta=\left(\mathbf{w},\R_{1},\phi\right)$
is a refinement of $\Lambda$, then $\Delta$ can be reduced to an
admissible substructure $\Delta^{\prime}$ by the application of \ref{lemma:Arrow Simplification Substructure-1}.
Therefore, we will provide a formula for admissible substructure $\Delta$
as follows.
\begin{thm}
\label{thm:Substructure Delta Admissible Formula}Let $R_{1},R_{2}\ge1$,
and let $\Delta=\left(\mathbf{w},\R_{1},\phi\right)$ be an admissible
substructure. Then, the number of arrowed arrays $\left(\alpha,\phi\right)\in\AR{K}{R_{1}}{R_{2}}{s}$
with substructure $\Delta$ is given by the same formula as in \ref{thm:Substructure Delta General Formula}.
That is, 
\begin{eqnarray*}
T\left(\Delta\right) & = & s!\sum_{A=0}^{s-1}\frac{r}{s-A}\binom{M}{M-A}\binom{K-M-1}{R_{2}-M+A-1}
\end{eqnarray*}
\end{thm}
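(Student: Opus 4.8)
The plan is to prove the formula by induction on the number $N$ of columns $j$ of $\Delta=\left(\mathbf{w},\R_1,\phi\right)$ with $w_j=0$, with \ref{thm:Substructure Delta General Formula} serving as the base case $N=0$ (every column nonempty). We may assume $s\ge 1$; the degenerate case $s=0$ does not arise in the applications, where $s$ is the positive number of mixed pairs carried by a tree edge. Throughout I would write $T(\Delta)\big|_{K,R_2}$ to keep the ambient parameters explicit, and recall that the right-hand side of the formula involves only $s$, $r$, $M$, $K$ and $R_2$, but not $R_1$ — this is what makes deletions that alter $R_1$ harmless.

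For the inductive step, fix a column $j_0$ with $w_{j_0}=0$. Since $\Delta$ is admissible, $j_0$ cannot carry an arrow-head, so $j_0$ is of exactly one of three kinds: \emph{free} ($j_0\notin\R_1\cup\H$, so cell $(1,j_0)$ is empty), an \emph{arrow-tail column} ($j_0\in\H$), or a \emph{root column} ($j_0\in\R_1$). If $j_0$ is a root and $R_1=1$, I would argue directly: then $r=0$, so the claimed formula is $0$; and since $s\ge1$ some column carries a vertex or an arrow-tail in row $1$, which can never reach the unique root $j_0$ under $\psi_1$ (nothing points at $j_0$, as $w_{j_0}=0$ and $\Delta$ is irreducible), so the forest condition is unsatisfiable and $T(\Delta)=0$ as well. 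In every other case, let $\Delta^-$ be obtained from $\Delta$ by deleting column $j_0$ — dropping nothing else if $j_0$ is free, dropping the arrow with tail $j_0$ (so $\phi^-=\phi|_{\H\setminus\{j_0\}}$) if $j_0\in\H$, and dropping the row-$1$ mark at $j_0$ (so $\R_1^-=\R_1\setminus\{j_0\}$) if $j_0\in\R_1$. Since no arrow-head and no value of $\psi_1$ or $\psi_2$ lands on $j_0$ (such a cell would need a vertex, either by admissibility or because $w_{j_0}=0$), deleting $j_0$ removes only an isolated vertex or a source from the functional digraphs of $\psi_1,\psi_2$; hence $\Delta^-$ is again admissible and irreducible, has $N-1$ zero-weight columns, and leaves $s$, $r$ and $M$ unchanged while lowering $K$ by one.

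Now I would split $T(\Delta)=\sum_{\R_2}T(\Gamma)$ over the refinements $\Gamma=\left(\mathbf{w},\R_1,\R_2,\phi\right)$ of $\Delta$ according to whether $j_0\in\R_2$. Deleting column $j_0$ (together with its row-$2$ mark when present) is a bijection from the arrowed arrays with substructure $\Gamma$ onto those with the corresponding refinement of $\Delta^-$ on $K-1$ columns, which yields
\[
T(\Delta)\big|_{K,R_2}=T(\Delta^-)\big|_{K-1,R_2}+T(\Delta^-)\big|_{K-1,R_2-1}.
\]
By the inductive hypothesis $\Delta^-$ obeys the formula, and for each $A$ the factor $\frac{r}{s-A}\binom{M}{M-A}$ is the same on all three sides, so the statement for $\Delta$ follows termwise from Pascal's identity $\binom{K-M-2}{R_2-M+A-2}+\binom{K-M-2}{R_2-M+A-1}=\binom{K-M-1}{R_2-M+A-1}$. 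The boundary instances are consistent: when $R_2-1=0$ both $T(\Delta^-)\big|_{K-1,0}$ and the formula vanish (the latter because $\binom{M}{M-A}$ and $\binom{K-M-1}{A-M-1}$ force disjoint ranges of $A$), and when $R_2>K$ both sides are $0$.

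The step I expect to be the main obstacle is the honest verification that each deletion is a bijection respecting the forest, irreducibility and admissibility conditions exactly — in particular that re-inserting $j_0$ (as an empty cell, an arrow to $\phi(j_0)$, or a row-$1$ mark) never creates a cycle in $\psi_1$ or $\psi_2$ and never makes the substructure reducible — together with the bookkeeping for the degenerate values of $R_1$ and $R_2$ noted above. Beyond that, the argument is the routine Pascal manipulation indicated.
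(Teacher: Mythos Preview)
Your argument is correct and follows essentially the same idea as the paper: reduce to \ref{thm:Substructure Delta General Formula} by deleting the vertex-free columns, after observing that admissibility guarantees no value of $\psi_1$ or $\psi_2$ lands in such a column. The only organizational difference is that the paper removes all empty columns at once, records how many of the $R_2$ row-$2$ marks fell among them, and closes with the Chu--Vandermonde identity, whereas you peel off one empty column at a time, split on whether it carries a row-$2$ mark, and close each step with Pascal's rule --- the same content, unrolled into an induction.
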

\begin{proof}
As permuting the columns of an arrowed array does not change whether
it satisfies the forest condition, we can without loss of generality
assume that the first $k$ of the $K$ columns of $\Delta$ are the
ones that contain at least one vertex. In particular, it means that
$\phi_{i}\left(j\right)\in\left[k\right]$. Now, let $\Delta^{R}$
be the subset of arrowed arrays that satisfies $\Delta$, and have
exactly $R$ marked cells in the first $k$ columns of row 2. Furthermore,
let $\Delta^{R;k}=\left(\mathbf{w}^{\prime},\R_{1}\cap\left[k\right],\phi^{\prime}\right)$
be the restriction of $\Delta^{R}$ to the first $k$ columns. In
other words, $\Delta^{R;k}=\left(\mathbf{w}^{\prime},\R_{1}\cap\left[k\right],\phi^{\prime}\right)$
is a substructure of $\AR{k}{\left|\R_{1}\cap\left[k\right]\right|}{R}{s}$,
where $w_{j}^{\prime}=w_{j}$ and $\phi_{i}^{\prime}\left(j\right)=\phi_{i}\left(j\right)$
for $1\le j\le k$. Note that $\phi_{i}\left(j\right)\in\left[k\right]$
implies that $\phi_{i}^{\prime}\left(j\right)\in\left[k\right]$,
so this is well defined. We will show that there is a $\binom{K-k}{R_{2}-R}$
to 1 correspondence between arrowed arrays satisfying $\Delta^{R}$
and arrowed arrays satisfying $\Delta^{R;k}$.

Let $\left(\alpha,\phi\right)$ be an arrowed array satisfying $\Delta^{R}$
and consider the cell $\left(i,j\right)$, where $k+1\le j\le K$.
As $\Delta^{R}$ is admissible, there cannot be another column $j^{\prime}$
such that $\psi_{i}\left(j^{\prime}\right)=j$. So, by deleting this
column, we have either deleted an isolated root vertex, deleted a
leaf, or done nothing to the functional digraph of $\psi_{i}$. Hence,
we can remove the column $j$ from the array without violating the
forest condition. Therefore, we can simply cut off the rightmost $K-k$
columns of $\left(\alpha,\phi\right)$ to obtain an arrowed array
$\left(\alpha^{\prime},\phi^{\prime}\right)$ that satisfies $\Delta^{R;k}$.

Conversely, given an arrowed array $\left(\alpha^{\prime},\phi^{\prime}\right)$
satisfying $\Delta^{R;k}$, we can add $K-k$ columns with no vertices
to obtain an arrowed array $\left(\alpha,\phi\right)$ satisfying
$\Delta^{R}$. Note that the positions of arrows and marked cells
in row 1 is completely fixed by $\Delta^{R}$. However, only the first
$k$ columns of $\left(\alpha,\phi\right)$ are predetermined in row
2, as given by $\left(\alpha^{\prime},\phi^{\prime}\right)$. For
the remaining $K-k$ columns, we can mark $R_{2}-R$ cells arbitrarily
and satisfy the forest condition, as adding columns with no vertices
does not change $\psi_{2}$. Therefore, for each arrowed array $\left(\alpha^{\prime},\phi^{\prime}\right)$
satisfying $\Delta^{R;k}$, there are exactly $\binom{K-k}{R_{2}-R}$
arrowed arrays satisfying $\Delta^{R}$.

By construction, each of the $\Delta^{R;k}$ has $w_{j}>0$ for $1\le j\le k$,
so we can use \ref{thm:Substructure Delta General Formula} to obtain
$T\left(\Delta^{R;k}\right)$. Furthermore, $\Delta^{1},\dots,\Delta^{\min\left(k,R_{2}\right)}$
partitions $\Delta$, and for $R=0$ or $R>k$, we have $T\left(\Delta^{R;k}\right)=0$.
Therefore, we can change the bounds to $0\le k\le R_{2}$, and use
the Chu-Vandermonde identity (pg. 67 of \cite{Andrews-Askey-Roy:1999})
to obtain 
\begin{eqnarray*}
T\left(\Delta\right) & = & \sum_{R=0}^{\min\left(k,R_{2}\right)}T\left(\Delta^{R;k}\right)\binom{K-k}{R_{2}-R}\\
 & = & s!\sum_{A=0}^{s-1}\sum_{R=0}^{R_{2}}\frac{r_{2}}{s-A}\binom{M}{M-A}\binom{k-M-1}{R-M+A-1}\binom{K-k}{R_{2}-R}\\
 & = & s!\sum_{A=0}^{s-1}\frac{r_{2}}{s-A}\binom{M}{M-A}\binom{K-M-1}{R_{2}-M+A-1}
\end{eqnarray*}
which is the formula for $T\left(\Delta\right)$ as given by \ref{thm:Substructure Delta General Formula}.
\end{proof}
Next, we will rewrite this formula using hypergeometric transformations,
as that will simplify our work later.
\begin{thm}
\label{thm:Substructure Delta Admissible Formula 2}Let $R_{1},R_{2}\ge1$,
and let $\Delta=\left(\mathbf{w},\R_{1},\phi\right)$ be an admissible
substructure that satisfies the balance condition. Then, the number
of arrowed arrays $\left(\alpha,\phi\right)\in\AR{K}{R_{1}}{R_{2}}{s}$
with substructure $\Delta$ is given by the formula 
\[
T\left(\Delta\right)=r\sum_{A=0}^{\min\left(s,K\right)-1}\frac{M!\left(K-A-1\right)!\left(s-A-1\right)!}{\left(M-A\right)!\left(K-R_{2}-A\right)!\left(R_{2}-1\right)!}
\]
where $r$ is the total number of vertices in row 1 of the columns
of $\R_{1}$, and $M$ is the number of columns that contain a critical
vertex in row 1.
\end{thm}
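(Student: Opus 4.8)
The goal is to transform the expression for $T\left(\Delta\right)$ in \ref{thm:Substructure Delta Admissible Formula} into the hypergeometric-friendly form stated here. The plan is to start from
\[
T\left(\Delta\right)=s!\sum_{A=0}^{s-1}\frac{r}{s-A}\binom{M}{M-A}\binom{K-M-1}{R_{2}-M+A-1}
\]
and manipulate each summand into the claimed ratio of factorials. The first step is to expand the two binomial coefficients as factorial quotients: $\binom{M}{M-A}=\frac{M!}{(M-A)!\,A!}$ and $\binom{K-M-1}{R_{2}-M+A-1}=\frac{(K-M-1)!}{(R_{2}-M+A-1)!\,(K-R_{2}-A)!}$, valid when the lower arguments are non-negative. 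Combining with the prefactor $\frac{s!}{s-A}=s!\cdot\frac{1}{s-A}$ and the relation $s! = (s-A)!\cdot (s-A+1)\cdots s$, I will aim to produce the target summand $r\cdot\frac{M!(K-A-1)!(s-A-1)!}{(M-A)!(K-R_{2}-A)!(R_{2}-1)!}$.

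The heart of the argument is therefore a term-by-term identity between the two summands, \emph{not} another summation: I expect
\[
\frac{s!}{s-A}\cdot\frac{M!}{(M-A)!\,A!}\cdot\frac{(K-M-1)!}{(R_{2}-M+A-1)!\,(K-R_{2}-A)!}
=\frac{M!\,(K-A-1)!\,(s-A-1)!}{(M-A)!\,(K-R_{2}-A)!\,(R_{2}-1)!}
\]
to hold as a polynomial identity in the relevant integer parameters, once one accounts for the ranges in which both sides are supported. This cannot be literally true summand-by-summand without a transformation — the appearance of $(K-A-1)!$ and $(s-A-1)!$ in place of $(K-M-1)!$ and $s!/(s-A)$ signals that a reindexing or a hypergeometric reflection (a Pfaff- or Euler-type transformation of the underlying ${}_2F_1$) is needed. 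So the real plan is: (i) rewrite $\sum_{A=0}^{s-1}\frac{1}{s-A}\binom{M}{M-A}\binom{K-M-1}{R_2-M+A-1}$ as a balanced hypergeometric series by introducing Pochhammer symbols, identifying the argument (here it will be the unit argument, with the $\frac{1}{s-A}$ absorbed as a shift in one parameter); (ii) apply the appropriate classical transformation — I anticipate a reversal of the summation order $A\mapsto s-1-A$ combined with Pfaff's transformation — to move the factorials depending on $M$ and $K-M$ into factorials depending only on $A$, $K$, $s$, and $R_2$; and (iii) read off the new summand and verify the new summation bound becomes $\min(s,K)-1$ (the upper cutoff coming from $(K-R_2-A)!$ and $(s-A-1)!$ both needing non-negative arguments, which tightens to $A\le\min(s,K)-1$ once $R_2\le\min(k,R_2)\le K$ is used).

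The main obstacle I expect is bookkeeping the support of the sum under the transformation: in \ref{thm:Substructure Delta General Formula} the bound is $A\le s-1$, but terms with $R_2-M+A-1<0$ or $K-R_2-A<0$ already vanish, and after the hypergeometric reflection the "natural" range changes, so I must check that extending or contracting the index range introduces no spurious terms and drops no genuine ones — in particular that terms with $M-A<0$ (where $\binom{M}{M-A}=0$) on the left correspond correctly to vanishing terms on the right, and that the new cutoff $\min(s,K)-1$ is exactly right rather than off by one. A secondary subtlety is the edge cases $M=0$ (where, as in the proof of \ref{thm:Substructure Delta General Formula}, one has $r=b+d$ forced and the sum collapses) and $R_2>K$, which must be checked to be consistent with both formulas. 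Once the range matching is pinned down, the algebraic identity itself should reduce to a single application of a named ${}_2F_1$ transformation from \cite{Andrews-Askey-Roy:1999}, analogous to the Chu–Vandermonde step already used in the proof of \ref{thm:Substructure Delta Admissible Formula}.
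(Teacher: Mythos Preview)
Your overall strategy---expand the summand into factorials, recognize a hypergeometric series at unit argument, apply a classical transformation, then reconcile the summation bounds---is exactly what the paper does. However, you misidentify the series: because of the extra factor $\frac{1}{s-A}=\frac{(s-A-1)!}{(s-A)!}$, the sum is a terminating ${}_3F_2$ (with upper parameters $-M,-s,-K+R_2$ and lower parameters $R_2-M,\,1-s$), not a ${}_2F_1$. Pfaff/Euler transformations therefore do not apply directly; the paper instead invokes the ${}_3F_2$ identity
\[
{}_3F_2\!\left({-N,\,b,\,c \atop d,\,e};1\right)=\frac{(d-c)^{(N)}}{d^{(N)}}\;{}_3F_2\!\left({-N,\,e-b,\,c \atop 1-N-d+c,\,e};1\right)
\]
from p.~142 of \cite{Andrews-Askey-Roy:1999}, with $N=M$. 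This is what moves the $M$-dependent factorials $(K-M-1)!$ and $(R_2-M+A-1)!$ into the $A$-dependent factor $(K-A-1)!$ and the constant $(R_2-1)!$; your proposed reversal $A\mapsto s-1-A$ together with a ${}_2F_1$ step will not accomplish this.

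One further point of bookkeeping you omit: before writing the ${}_3F_2$, the paper disposes of the case $M\ge s$ by observing that then every one of the $s$ vertices in row~1 is forced to be critical, so $r=0$ and both formulas vanish; only after restricting to $M\le s-1$ can the upper bound of the sum be lowered to $M$ and the series be written in standard form. Your edge-case list mentions $M=0$ but not this case, which is the one actually needed.
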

\begin{proof}
First, we rewrite $T\left(\Delta\right)$ using factorials to obtain
\[
T\left(\Delta\right)=r\sum_{A=0}^{s-1}\frac{s!M!\left(s-A-1\right)!\left(K-M-1\right)!}{\left(s-A\right)!\left(M-A\right)!A!\left(R_{2}-M+A-1\right)!\left(K-R_{2}-A\right)!}
\]
If $M\ge s$, then $r=0$, as each column of $\M$ requires a critical
vertex, and there are only $s$ vertices in row 1. In this case, the
theorem is true as both the original formula and the new formula imply
that $T\left(\Delta\right)=0$. Otherwise, we have $M\le s-1$ and
$\left(M-A\right)!$ in the denominator, which allows us to lower
the upper bound of the summation to $M$. We can then write it using
the standard notation for hypergeometric series to obtain
\begin{eqnarray*}
T\left(\Delta\right) & = & r\cdot{}_{3}F_{2}\left({-M,-s,-K+R_{2}\atop R_{2}-M,-s+1};1\right)\frac{\left(s-1\right)!\left(K-M-1\right)!}{\left(R_{2}-M-1\right)!\left(K-R_{2}\right)!}\\
 & = & r\cdot{}_{3}F_{2}\left({-M,1,-K+R_{2}\atop 1-K,-s+1};1\right)\frac{\left(K-M\right)^{\left(M\right)}\left(s-1\right)!\left(K-M-1\right)!}{\left(R_{2}-M\right)^{\left(M\right)}\left(R_{2}-M-1\right)!\left(K-R_{2}\right)!}\\
 & = & r\sum_{A=0}^{\min\left(s,K\right)-1}\frac{M!\left(K-A-1\right)!\left(s-A-1\right)!}{\left(M-A\right)!\left(K-R_{2}-A\right)!\left(R_{2}-1\right)!}
\end{eqnarray*}
where we use the $_{3}F_{2}$ identity 
\[
_{3}F_{2}\left({-N,b,c\atop d,e};1\right)=\frac{\left(d-c\right)^{\left(N\right)}}{d^{\left(N\right)}}{}_{3}F_{2}\left({-N,e-b,c\atop 1-N-d+c,e};1\right)
\]
for non-negative integer $N$, and $a,b,c,d\in\mathbb{C}$. This identity
can be found on pg. 142 of \cite{Andrews-Askey-Roy:1999}.

Now, as $\left(M-A\right)!$ is again part of the new denominator,
we can raise the summation index without changing the value of the
sum. Note that we know $M\le s-1$, and we can deduce that $M\le K-1$
as $R_{1}\ge1$. This allows us to raise the upper bound to $\min\left(s,K\right)-1$,
while keeping the numerator well defined.
\end{proof}
The benefit of this new formula is that we are no longer required
to keep $M\le\min\left(s,K\right)-1$. While taking $M\ge\min\left(s,K\right)$
for $\Delta$ makes no sense combinatorially, the value for $T\left(\Delta\right)$
is well defined and finite. This frees up $M$ for manipulation and
summation if we can multiply $T\left(\Delta\right)$ with an expression
that is zero if $M\ge s$ or $M\ge K$. When we do the induction on
the number of vertical arrays, this fact will become extremely useful.

With the formula for admissible substructures $\Delta$, we can now
provide a formula for the number of arrowed arrays satisfying substructure
$\Lambda$.
\begin{thm}
\label{thm:Substructure Lambda Formula}Given a substructure $\Lambda=\left(\mathbf{x},\P,\phi\right)$
such that the functional digraph of $\phi$ on $\H\cup\P$ is a rooted
forest with root vertices $\P$, the number of arrowed arrays $\left(\alpha,\phi\right)\in\AR{K}{R_{1}}{R_{2}}{s}$
satisfying substructure $\Lambda$ is given by the formula
\[
T\left(\Lambda\right)=\sum_{A=0}^{\min\left(s,K\right)-1}\frac{\left(s-P+R_{1}\right)\left(K-A-1\right)!\left(s-A-1\right)!\left(P-1\right)!}{\left(P-R_{1}-A\right)!\left(K-R_{2}-A\right)!\left(R_{1}-1\right)!\left(R_{2}-1\right)!}
\]
where $P$ is the number of columns of $\P$.
\end{thm}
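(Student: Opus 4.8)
The plan is to partition $\Lambda$ into the type-$\Delta$ substructures refining it, evaluate each by \ref{thm:Substructure Delta Admissible Formula 2}, and sum. As remarked after \ref{def:Substructure}, $\Lambda$ is partitioned by the substructures $\Delta_{\R_{1}}=(\mathbf{w}^{(\R_{1})},\R_{1},\phi)$ indexed by the $R_{1}$-element subsets $\R_{1}\subseteq\P$ (there are $\binom{P}{R_{1}}$ of them), where $w^{(\R_{1})}_{j}=x_{j}+1$ for $j\in\P\setminus\R_{1}$ and $w^{(\R_{1})}_{j}=x_{j}$ otherwise; each row then carries $\sum_{j}x_{j}+(P-R_{1})=s$ vertices, so these are substructures of $\AR{K}{R_1}{R_2}{s}$ and $T(\Lambda)=\sum_{\R_{1}}T(\Delta_{\R_{1}})$.

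First I would reduce each $\Delta_{\R_{1}}$ to an admissible substructure. Applying \ref{lemma:Arrow Simplification Substructure-2} and \ref{lemma:Arrow Simplification Substructure-1} repeatedly, which by those lemmas does not change the number of arrowed arrays satisfying the substructure, redirects every arrow until it points at the root in $\P$ of its tree in the forest of $\phi$; whenever that root lies in $\R_{1}$ (so the arrow-head would sit in a marked cell), \ref{lemma:Arrow Simplification Substructure-1} replaces the arrow by a mark on its tail column, and since arrow-tails lie in $\H\subseteq\K\setminus\P$ no arrow points at them, so the process terminates with an admissible substructure $\Delta'_{\R_{1}}$ with $T(\Delta'_{\R_{1}})=T(\Delta_{\R_{1}})$, whose marked columns in row $1$ are $\R_{1}\cup\{\,\X\in\H:\mathrm{root}(\X)\in\R_{1}\,\}$.

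Two observations then collapse the sum. First, the number $M$ of columns bearing a critical vertex in row $1$ is unchanged by all these reductions (a cell carrying an arrow-tail, before or after that tail becomes a mark, never bears a critical vertex), and in $\Delta_{\R_{1}}$ it equals $|\P\setminus\R_{1}|=P-R_{1}$, the same for every choice of $\R_{1}$; hence \ref{thm:Substructure Delta Admissible Formula 2} gives
\begin{equation*}
T(\Delta'_{\R_{1}})=r_{\R_{1}}\sum_{A=0}^{\min(s,K)-1}\frac{(P-R_{1})!\,(K-A-1)!\,(s-A-1)!}{(P-R_{1}-A)!\,(K-R_{2}-A)!\,(R_{2}-1)!},
\end{equation*}
where $r_{\R_{1}}=\sum_{j\in\R_{1}}x_{j}+\sum_{\X\in\H,\ \mathrm{root}(\X)\in\R_{1}}x_{\X}$ is the number of vertices in row $1$ of the marked columns of $\Delta'_{\R_{1}}$ (when $r_{\R_{1}}=0$ both sides vanish). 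Second, the $A$-sum depends only on $P-R_{1}$, so it factors out of $\sum_{\R_{1}}$, and it remains to evaluate $\sum_{\R_{1}}r_{\R_{1}}$: each $x_{j}$ with $j\in\P$ is counted once for each $\R_{1}$ with $j\in\R_{1}$, and each $x_{\X}$ with $\X\in\H$ once for each $\R_{1}$ with $\mathrm{root}(\X)\in\R_{1}$, so each is counted $\binom{P-1}{R_{1}-1}$ times and $\sum_{\R_{1}}r_{\R_{1}}=\binom{P-1}{R_{1}-1}\sum_{j}x_{j}=(s-P+R_{1})\binom{P-1}{R_{1}-1}$. Substituting this and using $\binom{P-1}{R_{1}-1}(P-R_{1})!=(P-1)!/(R_{1}-1)!$ produces exactly the stated formula for $T(\Lambda)$.

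I expect the main obstacle to be making the reduction step rigorous: one must check, through the repeated application of the arrow simplification lemmas, that the count of arrowed arrays is genuinely preserved even though marks are created and the ambient parameter changes, and that $M$ and the vertex counts of the marked columns end up as claimed. Once that is under control, the constancy $M=P-R_{1}$ and the elementary count of $\sum_{\R_{1}}r_{\R_{1}}$ are what decouple the double sum, leaving only the factorial bookkeeping sketched above.
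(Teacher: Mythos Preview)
Your proposal is correct and follows essentially the same approach as the paper: partition $\Lambda$ by the choice of $\R_{1}\subseteq\P$, reduce each resulting $\Delta$ to an admissible substructure via the arrow simplification lemmas, observe that $M=P-R_{1}$ independently of $\R_{1}$, and then sum the contributions to $r$ over all $\R_{1}$ using the $\binom{P-1}{R_{1}-1}$ count. The only cosmetic difference is that the paper first reduces $\Lambda$ itself to the irreducible case via \ref{lemma:Arrow Simplification Substructure-2} and then needs only \ref{lemma:Arrow Simplification Substructure-1} inside the main argument, whereas you fold both reductions into the per-$\R_{1}$ step; the substance is identical.
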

\begin{proof}
First, we suppose that $\Lambda$ is irreducible. We prove this by
substituting into the formula for $T\left(\Delta\right)$ given by
\ref{thm:Substructure Delta Admissible Formula 2}. Let $\R_{1}$
be an $R_{1}$-subset of $\P$, and consider the substructure $\Delta=\left(\mathbf{x},\R_{1},\phi\right)$,
where $x_{i}^{\prime}=x_{i}+1$ if $x\in\P\backslash\R_{1}$, and
$x_{i}^{\prime}=x_{i}$, otherwise. Now, note that $\Delta$ may not
be irreducible, as there can be arrows pointing to the columns of
$\R_{1}$. Therefore, we have to reduce $\Delta$ using the arrow
simplification lemma defined in \ref{lemma:Arrow Simplification Substructure-1}.
This gives us an irreducible substructure $\Delta^{\prime}=\left(\mathbf{x}^{\prime},\R_{1}\cup\H_{1},\phi^{\prime}\right)$,
where $\H_{1}\subseteq\H$ is the set of columns that points to $\R_{1}$,
and $\phi^{\prime}$ is $\phi$ restricted to the columns of $\H\backslash\H_{1}$.

Now, $\Delta^{\prime}$ satisfies the balance condition by construction.
Furthermore, any cell of $\Delta^{\prime}$ that contains an arrow-head
must be in $\P\backslash\R_{1}$, as otherwise $\Delta^{\prime}$
will not be irreducible. Since the columns of $\P\backslash\R_{1}$
must each contain at least one vertex, $\Delta^{\prime}$ is an admissible
substructure, so we can use the formula for $T\left(\Delta\right)$
given by \ref{thm:Substructure Delta Admissible Formula 2}. As $\Delta^{\prime}$
satisfies the balance condition, we can take $r$ to be the number
of vertices in row 1 of $\R_{1}$. Observe that the $P-R_{1}$ vertices
added to row 1 of $\P\backslash\R_{1}$ are all critical vertices,
regardless of the choice of $\R_{1}$. Hence, they never contribute
to $T\left(\Delta\right)$. This means that we only need to consider
the non-critical vertices of row 1, which are given by $\mathbf{x}$.
Now, a non-critical vertex $u$ in row 1 of a column $\X$ contributes
to $r$ of the formula for $T\left(\Delta\right)$ if $\X\in\R_{1}$,
or $\X\in\H$ and $\X$ points to a column in $\R_{1}$. In either
case, there are $\binom{P-1}{R_{1}-1}$ different subsets $\R_{1}$
such that $\X$ is marked in $\Delta^{\prime}$, out of the $\binom{P}{R_{1}}$
possible $R_{1}$-subsets of $\P$. Given that all non-critical vertices
of row 1 are in $\P\cup\H$, and that there are $s-P+R_{1}$ non-critical
vertices in row 1, we have
\begin{eqnarray*}
T\left(\Lambda\right) & = & T\left(\Delta^{\prime}\right)\cdot\frac{s-P+R_{1}}{r}\binom{P-1}{R_{1}-1}\\
 & = & \sum_{A=0}^{\min\left(s,K\right)-1}\frac{\left(s-P+R_{1}\right)\left(K-A-1\right)!\left(s-A-1\right)!\left(P-1\right)!}{\left(P-R_{1}-A\right)!\left(K-R_{2}-A\right)!\left(R_{1}-1\right)!\left(R_{2}-1\right)!}
\end{eqnarray*}
where we substitute in $M=P-R_{1}$ as the number of critical vertices
in row 1.

Finally, if $\Lambda$ is not irreducible, we can repeatedly apply
\ref{lemma:Arrow Simplification Substructure-2} to obtain an irreducible
substructure $\Lambda^{\prime}=\left(\mathbf{y},\P,\phi^{\prime}\right)$.
As $s$, $K$, $R_{1}$, $R_{2}$, and $\P$ all remain the same,
we have $T\left(\Lambda\right)=T\left(\Lambda^{\prime}\right)$, so
the result follows.
\end{proof}

\section{\label{sec:Vertical Array Decomposition}Enumeration of Vertical
Arrays}

At this point, we are ready to decompose proper vertical arrays. Recall
that a paired array $\alpha\in\PA{n}{K}{\mathbf{R}}{\mathbf{q}}{\mathbf{s}}$
is tree-shaped if the support graph of $\mathbf{s}$ is a tree. With
tree-shaped vertical arrays, we can delete a row that is a leaf in
the support graph while keeping the support graph a tree. This allows
us to recursively decompose tree-shaped vertical arrays into smaller
tree-shaped vertical arrays and arrowed arrays. Then, by using \ref{thm:Substructure Lambda Formula},
we can provide a formula for $v_{n,K;\mathbf{R}}^{\left(\mathbf{s}\right)}$
when the support graph of $\mathbf{s}$ is a tree.

We start off with a number of preliminary definitions and facts.
\begin{fact}
\label{fact:Proper Array Marking Cells}Let $\alpha\in\PA{n}{K}{\mathbf{R}}{\mathbf{q}}{\mathbf{s}}$
be a proper paired array. Suppose cell $\left(i,j\right)$ of $\alpha$
is an unmarked cell containing at least one vertex, then $\alpha^{\prime}$
that is formed by marking cell $\left(i,j\right)$ of $\alpha$ is
also a proper paired array.
\end{fact}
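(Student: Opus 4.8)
The plan is to check that marking an occupied, unmarked cell $(i,j)$ preserves each requirement in \ref{def:Paired Array} and \ref{def:Paired Array Conditions}, and to note that the parameters of the resulting array are still valid. First I would observe that marking a cell alters neither the vertices nor the pairs of $\alpha$; it only adds one box, in row $i$, column $j$. Consequently every structural requirement of \ref{def:Paired Array} — the number of vertices per cell and per row, and the counts of non-mixed and mixed pairs between prescribed rows — is untouched, while the number of marked columns of row $i$ increases from $R_i$ to $R_i+1$. Since $(i,j)$ was unmarked, not every column of row $i$ is marked, so $R_i\le K-1$ and hence $R_i+1\le K$; thus $\alpha'\in\PA{n}{K}{\mathbf{R}'}{\mathbf{q}}{\mathbf{s}}$, where $\mathbf{R}'$ agrees with $\mathbf{R}$ except that its $i$-th entry is $R_i+1$. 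The balance condition refers only to mixed vertices and mixed pairs, which are unchanged, so it continues to hold.

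The remaining work is the forest condition. For a row $k\ne i$, the function $\psi_k$ is determined by the vertex lists of row $k$, the pairing, and the marked set $\R_k$, all identical in $\alpha$ and $\alpha'$; hence the forest condition for row $k$ is inherited directly from $\alpha$. For row $i$, the new marked set is $\R_i'=\R_i\cup\{j\}$, the new domain of the forest condition function is $\F_i\backslash\R_i'$, and $\psi_i'(\ell)=\psi_i(\ell)$ for every column $\ell$ in this smaller domain — that is, $\psi_i'$ is obtained from $\psi_i$ by deleting the single assignment at $j$ and promoting $j$ to a root. I would then verify the reachability form of the condition directly: given $\ell\in\F_i\backslash\R_i'$, the forest condition for $\alpha$ supplies a positive integer $t$ with $\psi_i^t(\ell)\in\R_i$; examining the orbit $\ell,\psi_i(\ell),\dots,\psi_i^t(\ell)$, either none of its first $t$ terms equals $j$, in which case $\psi_i'$ and $\psi_i$ agree all along the orbit and $(\psi_i')^t(\ell)=\psi_i^t(\ell)\in\R_i\subseteq\R_i'$, or some term $\psi_i^u(\ell)=j$ with $u$ minimal, in which case $(\psi_i')^u(\ell)=j\in\R_i'$. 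Either way $\ell$ reaches $\R_i'$, so the functional digraph of $\psi_i'$ is a forest with root set $\R_i'$, and the forest condition holds for row $i$.

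Combining the two conditions, $\alpha'$ is proper, which completes the proof. The only step needing any care is the orbit-truncation argument for row $i$: one must notice that once the $\psi_i$-orbit of a column reaches $j$, what $\psi_i$ did afterward is irrelevant because $j$ is now itself a root; everything else is bookkeeping. There is no genuine obstacle — the statement is essentially the observation that deleting one out-edge from a functional digraph that is a rooted forest, while declaring the deleted vertex a new root, again yields a rooted forest.
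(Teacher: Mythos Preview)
Your argument is correct. The paper states this as a \emph{Fact} without proof, so there is nothing to compare against; your verification that marking an occupied cell leaves the balance condition untouched and that truncating the $\psi_i$-orbit at $j$ (now a root) preserves the forest condition in row $i$ is exactly the intended, elementary justification.
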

Note that the converse of \ref{fact:Proper Array Marking Cells} is
not true. For example, if $\alpha^{\prime}$ has only one marked cell
in row $i$, then unmarking that cell violates the forest condition
for that row. This fact allows us to mark cells containing critical
vertices, making those vertices non-critical and removing them from
the forest condition. This leads to our next definition.
\begin{defn}
\label{def:Partially Paired Array}If $n,K\ge1$, then a \emph{partially-paired
array} $\alpha$ is an $n\times K$ array of cells, where each cell
contains zero or more vertices, and is either marked or unmarked.
Furthermore, each vertex of the array may be paired with another vertex.
However, only the rightmost vertices of unmarked cells are required
to be paired with another vertex, and we call the vertices not paired
with any other vertices \emph{unpaired vertices}. Terms for paired
arrays such as \emph{critical vertices} and parameters like $q_{i}$
and $R_{i}$ carry over from \ref{def:Paired Array} and \ref{conv:Array Convention}.
\end{defn}
By definition, all paired arrays are partially-paired arrays. Also,
as unpaired vertices are neither mixed nor critical, they do not affect
the balance or forest conditions. However, we do consider unpaired
vertices as objects in a partially-paired array.

Now, our main reason for using partially-paired arrays is so that
we can unpair vertices of a paired array. That is, if $\left\{ u,v\right\} $
is a pair of non-critical vertices in a partially-paired array $\alpha$,
we can unpair them to create a new partially-paired array $\alpha^{\prime}$
that is otherwise identical to $\alpha$, but with $u$ and $v$ unpaired.
Then, we can remove $u$ and $v$ separately without impacting the
balance and forest conditions. We will adapt a technique from Goulden
and Slofstra for labelling the objects in a row of a partially-paired
array with a set of positive integers. This allows us to insert or
remove a subset of the unpaired vertices while keeping track of their
positions.
\begin{proc}
\label{proc:Labelling Procedure}Let $\alpha$ be a partially-paired
array with $p_{i}$ vertices and $R_{i}$ marked cells in row $i$,
where $1\le i\le n$. We describe the following three procedures:

\begin{enumerate}
\item Let $\mathcal{S}$ be a set of positive integers of size $p_{i}+R_{i}$.
To \emph{label row $i$ of $\alpha$ with $\mathcal{S}$} is to assign
from left to right elements of $\mathcal{S}$ to the objects of row
$i$, from smallest to largest. As described in \ref{def:Paired Array},
in a cell that contains both vertices and a box, the box is to be
taken as the rightmost object of the cell.
\item Let $\V$ be a subset of the unpaired vertices in row $i$. To \emph{extract
$\V$ from $\alpha$} is to create a partially-paired array $\alpha^{\prime}$
and a set of positive integers $\W$, where $\alpha^{\prime}$ is
$\alpha$ with $\V$ deleted, and $\W$ is a $\left|\V\right|$-subset
of $\left[p_{i}+R_{i}-1\right]$. This is done by labelling row $i$
of $\alpha$ with $\left[p_{i}+R_{i}\right]$, then deleting $\V$
from $\alpha$. We let $\W$ be the labels of the vertices deleted.
Note that $\W$ cannot contain $p_{i}+R_{i}$ as the deleted vertices
cannot be the rightmost objects of their cells.
\item Let $\W$ be a $y$-subset of $\left[p_{i}+R_{i}+y-1\right]$, where
$y\ge0$. To \emph{insert $\W$ into row $i$ of $\alpha$} is to
add $y$ unpaired vertices to row $i$ of $\alpha$ to create a partially-paired
array $\alpha^{\prime}$. This is done by labelling row $i$ of $\alpha$
with $\left[p_{i}+R_{i}+y\right]\backslash\W$. Then, for each $w\in\W$,
we find the smallest $w^{\prime}\notin\W$ such that $w^{\prime}>w$,
and place a vertex to the left of and in the same cell as the object
labelled $w^{\prime}$. As the new vertex is not the rightmost object
of a cell, it is non-critical. Furthermore, if there is more than
one vertex to be inserted to the left of an object, they should be
inserted in increasing order from left to right. In the end, row $i$
of $\alpha^{\prime}$ contains $p_{i}+R_{i}+y$ objects, labelled
from left to right by $1$ to $p_{i}+R_{i}+y$ in increasing order.
Finally, we let $\V$ denote the set of vertices inserted, to mirror
the extraction procedure.
\end{enumerate}
\end{proc}
Notice that in both the extraction and insertion procedures, the vertices
involved are unpaired. Furthermore, the use of the same variables
$\V$ and $\W$ between procedure 2 and 3 is deliberate, as we shall
now show that the extraction and insertion procedures are inverses
of each other.
\begin{prop}
\label{prop:Extraction/insertion inverses}Let $\alpha$ be a partially-paired
array with $p_{i}$ vertices and $R_{i}$ marked cells in row $i$,
and $\V$ be a subset of the unpaired vertices in row $i$, where
$1\le i\le n$. Let $\beta$ be the partially-paired array and $\W$
be the subset of $\left[p_{i}+R_{i}-1\right]$ created by extracting
$\V$ from $\alpha$. Suppose $\alpha^{\prime}$ is the partially-paired
array formed by reinserting $\W$ into row $i$ of $\beta$, and $\V^{\prime}$
is the set of vertices inserted, then $\alpha=\alpha^{\prime}$ and
$\V=\V^{\prime}$. Conversely, let $\beta$ be a partially-paired
array with $p_{i}$ vertices and $R_{i}$ marked cells in row $i$,
where $1\le i\le n$, and suppose $\W$ is a $y$-subset of $\left[p_{i}+R_{i}+y-1\right]$,
with $y\ge0$. Let $\alpha$ be the partially-paired array formed
by inserting $\W$ into row $i$ of $\beta$, and $\V$ be the set
of inserted vertices. Suppose $\beta^{\prime}$ and $\W^{\prime}$
is the pair of objects created from extracting $\V$ from $\alpha$,
then $\beta=\beta^{\prime}$ and $\W=\W^{\prime}$. In both cases,
$\alpha$ is proper if and only if $\beta$ is proper.
\end{prop}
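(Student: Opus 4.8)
The plan is to reduce both inverse statements to a single combinatorial fact about the labelling procedure, and to treat properness separately by noting that the vertices being moved are inert. The key structural observation, used throughout, is that \emph{an unpaired vertex is never the rightmost object of its cell}: in an unmarked cell only a paired vertex may be rightmost, and in a marked cell the box is the rightmost object. Hence, whenever a row is labelled and an unpaired vertex receives label $w$, the object immediately to its right lies in the same cell and receives label $w+1$; iterating this along a maximal block of consecutive labels all belonging to some set $\W$ of unpaired vertices shows that the whole block, in increasing left-to-right order, shares the cell of the object carrying the first label above the block.

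First I would establish the relabelling fact underlying \ref{proc:Labelling Procedure}. Suppose row $i$ of $\beta$ has $m=p_i+R_i$ objects and $\W$ is a $y$-subset of $[m+y-1]$. After labelling row $i$ of $\beta$ with $[m+y]\setminus\W$, inserting $\W$, and relabelling the resulting row with $[m+y]$, I claim every original object of $\beta$ recovers its label from $[m+y]\setminus\W$ and the vertex inserted on account of $w\in\W$ receives label $w$. This is a counting argument: for an original object carrying label $\ell\notin\W$, the objects strictly to its left are the original objects with smaller label together with the inserted vertices $v_w$ whose target object has label $\le\ell$, which happens precisely when $w<\ell$; the two counts sum to $\ell-1$, so the object is the $\ell$-th and keeps label $\ell$. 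A companion count, using the tie-breaking rule ``inserted in increasing order from left to right'', shows the vertex for $w$ has exactly $w-1$ objects to its left. The dual statement is immediate: if a row with $m$ objects is labelled with $[m]$, a subset $\V$ of unpaired vertices receives labels forming $\W\subseteq[m-1]$, and $\V$ is deleted, then relabelling the survivors with $[m]\setminus\W$ returns each to its former label.

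Granting this, the two inverse statements follow by comparing the procedures step by step. For extract-then-reinsert: extracting $\V$ from $\alpha$ labels row $i$ with $[p_i+R_i]$, records $\W\subseteq[p_i+R_i-1]$, and deletes $\V$; reinserting $\W$ into $\beta$ first relabels the survivors with $[p_i+R_i]\setminus\W$, which by the dual relabelling fact restores their original $\alpha$-labels, and then, for each $w\in\W$, places a new vertex immediately to the left of the object labelled $w':=\min\{\ell\notin\W:\ell>w\}$, ties broken by increasing $\W$-value. By the block observation, in $\alpha$ itself the vertices carrying the labels of the block between $w$ and $w'$ already sat, in this same increasing order, immediately to the left of the object labelled $w'$ and in its cell; so the reinsertion reproduces exactly the cells of $\alpha$, giving $\alpha'=\alpha$, and the inserted set, being the set of vertices of $\alpha$ not present in $\beta$, equals $\V$. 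For insert-then-extract: by the relabelling fact the inserted vertices receive, after relabelling row $i$ of $\alpha$ with $[p_i+R_i+y]$, exactly the label-set $\W$; each is non-rightmost hence unpaired, so extracting them performs precisely this relabelling, records $\W'=\W$, and deletes them, leaving the original objects of $\beta$ untouched and in their original order, so $\beta'=\beta$.

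Finally, for properness in both directions: the vertices inserted or deleted are unpaired, hence neither mixed nor critical, and inserting a non-rightmost vertex into a cell alters neither that cell's rightmost object, nor which cells contain a critical vertex, nor the number of mixed vertices in any cell, nor any of the forest-condition functions $\psi_1,\dots,\psi_n$; rows other than $i$ are untouched. Hence $\alpha$ satisfies the balance and forest conditions if and only if $\beta$ does. I expect the main obstacle to be the bookkeeping in the relabelling fact --- keeping the two counts straight across a block of consecutive labels in $\W$ and checking that the ``increasing order from left to right'' tie-break matches the left-to-right order of unpaired vertices inside a cell of $\alpha$; once that is settled, the rest is reading off \ref{proc:Labelling Procedure}.
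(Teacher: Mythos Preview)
Your proposal is correct and follows essentially the same approach as the paper: both arguments hinge on the observation that unpaired vertices are never rightmost objects and on the consistency of labels between extraction and insertion, then conclude properness from the fact that the moved vertices are neither mixed nor critical. The only difference is one of detail --- you supply a counting argument for the relabelling fact (that after insertion the row carries labels $1,\dots,p_i+R_i+y$ in order, with inserted vertices at positions $\W$), whereas the paper states this as part of the specification of \ref{proc:Labelling Procedure} and uses it without further justification.
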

\begin{proof}
Note that when we extract $\V$ from $\alpha$, we obtain the partially-paired
array $\beta$ and the set $\W$ that is a $\left|\V\right|$-subset
of $\left[p_{i}+R_{i}-1\right]$. As $\beta$ has $p_{i}+R_{i}-\left|\V\right|$
objects in row $i$, we can insert $\W$ into $\beta$ to obtain the
partially-paired array $\alpha^{\prime}$ and the set $\V^{\prime}$
of inserted vertices. Furthermore, the objects remaining in $\beta$
are labelled with the same labels $\left[p_{i}+R_{i}-1\right]\backslash\W$
during the extraction and insertion procedures. Finally, each vertex
$v\in\V$ is in the same cell and to the left of some other object
in $\alpha$, and is reinserted into that same cell in $\alpha^{\prime}$
in increasing order of labels. Therefore, $\alpha=\alpha^{\prime}$
and $\V=\V^{\prime}$.

Conversely, when we insert $\W$ into row $i$ of $\beta$, the vertices
inserted by $\W$ are non-critical vertices, and are the objects in
$\alpha$ that are labelled from left to right with $\left[p_{i}+R_{i}+\left|\W\right|\right]$.
As the set of inserted vertices retains the same labels when in the
extraction procedure, we have $\beta=\beta^{\prime}$ and $\W=\W^{\prime}$.

Finally, in both the extraction and insertion procedures, the vertices
involved are non-critical and unpaired. Therefore, they do not impact
the balance or the forest conditions. Hence, $\alpha$ is proper if
and only if $\beta$ is proper.
\end{proof}
Next, we define the compatibility condition that allows us to combine
arrowed arrays and vertical arrays together.
\begin{defn}
\label{def:Lambda Compatible}Let $\alpha\in\PVA{n}{K}{\mathbf{R}}{\mathbf{s}}$
be an $n$-row proper vertical array with $\R_{i}$ as its set of
marked cells in row $i$, and $\psi_{i}$ as its forest condition
function for row $i$. A substructure $\Lambda=\left(\mathbf{x},\P,\phi\right)$
as defined in \ref{def:Substructure} is \emph{$\Lambda$-compatible}
with row $i$ of $\alpha$ if $\P=\R_{i}$ and $\phi=\psi_{i}$. Furthermore,
let $R_{1}^{\prime}$ and $R_{2}^{\prime}$ be such that $1\le R_{1}^{\prime}\le R_{i}$
and $1\le R_{2}^{\prime}\le K$, and suppose that $\W$ is a $x$-subset
of $\left[s_{i}+R_{i}+x-1\right]$ for some $x\ge0$. We define $\Lambda_{\alpha,i,\W}$
to be the substructure of $\AR{K}{R_{1}^{\prime}}{R_{2}^{\prime}}{x+R_{i}-R_{1}^{\prime}}$
with parameters $\Lambda_{\alpha,i,\W}=\left(\mathbf{x},\R_{i},\psi_{i}\right)$,
where $\mathbf{x}=\left(x_{1},\dots,x_{K}\right)$ and $x_{j}$ is
the number of vertices inserted into cell $\left(i,j\right)$ of $\alpha$
if $\W$ is inserted into row $i$ of $\alpha$ by the insertion procedure
defined in \ref{proc:Labelling Procedure}.
\end{defn}
By definition, $\Lambda_{\alpha,i,\W}$ is $\Lambda$-compatible with
row $i$ of $\alpha$. Also, by summing over the number of vertices
inserted into cell $\left(i,j\right)$, we have $\left|\W\right|=\sum_{j}x_{j}$.

With substructure compatibility defined, we can now decompose tree-shaped
vertical arrays. Let $\alpha\in\PVA{n+1}{K}{\mathbf{R}}{\mathbf{s}}$
be an $\left(n+1\right)$-row proper vertical array, and without loss
of generality assume that row $n+1$ is a leaf vertex adjacent to
row $n$ in the support graph of $\mathbf{s}$. To extract row $n+1$
from $\alpha$, we mark the cells in row $n$ containing the critical
vertices matched with vertices in row $n+1$. Then, we remove all
pairs between rows $n$ and $n+1$, and subsequently delete row $n+1$.
To keep track of the removed vertices in row $n$, we use a $\left(s_{n+1}-P+R_{n}\right)$-subset
to represent the positions of the non-critical vertices, and an arrowed
array to represent the critical vertices and pairings of the vertices
removed.
\begin{thm}
\label{thm:Vertical Array Decomposition}Let $n,K\ge1$, $\mathbf{s}=\left(s_{1,2},s_{1,3},\dots,s_{n,n+1}\right)\ge\mathbf{0}$,
and $\mathbf{R}=\left(R_{1},\dots,R_{n+1}\right)\in\left[K\right]^{n+1}$.
Suppose the support graph of $\mathbf{s}$ is a tree with the vertex
$n+1$ as a leaf adjacent to the vertex $n$. Then, there exists a
decomposition
\[
\zeta\colon\PVA{n+1}{K}{\mathbf{R}}{\mathbf{s}}\rightarrow\bigcup_{P=R_{n}}^{\min\left(s_{n+1}+R_{n},K\right)}\bigcup_{\substack{\beta\in\PVA{n}{K}{\mathbf{R}^{\prime}}{\mathbf{s}^{\prime}}\\
\W\in\ksub{s_{n}+R_{n}-1}{s_{n+1}-P+R_{n}}
}
}\left(\beta,\W,\Lambda_{\beta,n,\W}\right)
\]
of proper vertical arrays into a triple of smaller vertical arrays,
$\left(s_{n+1}-P+R_{n}\right)$-subsets, and arrowed arrays. Here,
$\Lambda_{\beta,n,\W}$ are substructures of $\AR{K}{R_{n}}{R_{n+1}}{s_{n+1}}$,
$\mathbf{s}^{\prime}$ is $\mathbf{s}$ restricted to an $n\times n$
matrix by removing the last row and column, $s_{i}=\sum_{k\neq i}s_{i,k}$
for $1\le i\le n+1$, and $\mathbf{R}^{\prime}$ is a vector of length
$n$ given by 
\begin{eqnarray*}
R_{k}^{\prime} & = & \begin{cases}
R_{k} & k<n\\
P & k=n
\end{cases}
\end{eqnarray*}
Furthermore, this decomposition is a bijection.
\end{thm}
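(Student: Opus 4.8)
The plan is to construct $\zeta$ explicitly and exhibit an inverse. Since the support graph of $\mathbf{s}$ is a tree with $n+1$ a leaf adjacent to $n$, every vertex of row $n+1$ is paired with a vertex of row $n$, so there are exactly $s_{n+1}=s_{n,n+1}$ \emph{cross-pairs}; call their row-$n$ endpoints the \emph{cross-vertices}. Given $\alpha\in\PVA{n+1}{K}{\mathbf{R}}{\mathbf{s}}$, first mark every unmarked cell of row $n$ whose (unique) critical vertex is a cross-vertex; by \ref{fact:Proper Array Marking Cells} the array stays proper, and let $P$ be the resulting number of marked cells of row $n$, so that $P-R_{n}$ cross-vertices were critical and $s_{n+1}-P+R_{n}$ were not; since distinct newly marked cells have distinct critical cross-vertices and there are at most $K$ columns, $R_{n}\le P\le\min(s_{n+1}+R_{n},K)$. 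Next, unpair all cross-pairs, delete the $P-R_{n}$ formerly-critical cross-vertices (the rightmost vertices of the newly marked cells, hence recoverable from the marking data), and extract the remaining $s_{n+1}-P+R_{n}$ cross-vertices from row $n$ by \ref{proc:Labelling Procedure}, recording their positions as a subset $\W$; counting objects in row $n$ and using $s_{n}=s_{n}^{\prime}+s_{n+1}$, where $s_{n}^{\prime}$ is the number of row-$n$ vertices that remain, gives $\W\in\ksub{s_{n}+R_{n}-1}{s_{n+1}-P+R_{n}}$. Deleting row $n+1$ then yields $\beta$, which by \ref{prop:Extraction/insertion inverses} is a proper vertical array in $\PVA{n}{K}{\mathbf{R}^{\prime}}{\mathbf{s}^{\prime}}$. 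Finally, record the arrowed array $(\alpha^{\prime\prime},\phi)$ whose row 2 is the deleted row $n+1$ with its $R_{n+1}$ marked cells, whose row 1 carries the $s_{n+1}$ cross-vertices in their original columns and order together with the original $R_{n}$ marked cells, whose pairs match the cross-pairs of $\alpha$, and whose arrows $\phi$ are $\psi_{n}$, the forest-condition function of row $n$ of $\beta$; set $\zeta(\alpha)=(\beta,\W,(\alpha^{\prime\prime},\phi))$.

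Next I check $\zeta$ is well defined, i.e.\ $(\alpha^{\prime\prime},\phi)\in\AR{K}{R_{n}}{R_{n+1}}{s_{n+1}}$ and satisfies the substructure $\Lambda_{\beta,n,\W}=(\mathbf{x},\R_{n}(\beta),\psi_{n})$ of \ref{def:Lambda Compatible}, where $x_{j}$ is the number of $\W$-vertices landing in cell $(n,j)$ of $\beta$. The marked-cell counts, the number of pairs, and the per-column vertex counts ($x_{j}$ vertices, or $x_{j}+1$ on the unmarked cells of $\P=\R_{n}(\beta)$) are immediate from the construction; the balance condition of $(\alpha^{\prime\prime},\phi)$ follows from \ref{lem:tree shape balance} applied to $\alpha$, which gives $s_{n,n+1,j}=s_{n+1,n,j}$ for every $j$, i.e.\ rows 1 and 2 of $\alpha^{\prime\prime}$ have the same number of vertices in each column; and $\phi$ maps into $\H\cup\P$ because the forest condition on row $n$ of $\beta$ forbids $\psi_{n}$ from pointing to a column that is neither occupied nor marked in row $n$. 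The two forest conditions of $(\alpha^{\prime\prime},\phi)$ turn out to be \emph{equivalent} to the forest conditions of $\alpha$ on rows $n$ and $n+1$: its row-2 forest function is literally that of row $n+1$ of $\alpha$ on the same vertex set, and its row-1 forest function coincides with $\psi_{n}^{\alpha}$ — on the arrow-tail columns $\H$ it reads off $\phi=\psi_{n}$ (the critical vertices shared with $\beta$) and on the columns of $\P\setminus\R_{n}$ it points at the partner of the restored critical cross-vertex — so its digraph is a forest rooted at $\R_{n}$ exactly when the row-$n$ forest of $\alpha$ is.

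For the inverse, run the construction backwards: given $P$, $\beta\in\PVA{n}{K}{\mathbf{R}^{\prime}}{\mathbf{s}^{\prime}}$, $\W$, and an arrowed array $(\alpha^{\prime\prime},\phi)$ satisfying $\Lambda_{\beta,n,\W}$, insert $\W$ into row $n$ of $\beta$ by \ref{proc:Labelling Procedure}, then for each of the $P-R_{n}$ columns of $\P$ unmarked in $\alpha^{\prime\prime}$ unmark that cell of row $n$ and append a fresh rightmost vertex, adjoin row $n+1$ equal to row 2 of $\alpha^{\prime\prime}$, and pair each cross-vertex of row $n$ with the partner in row $n+1$ of the matching vertex of row 1 of $\alpha^{\prime\prime}$ — the matching being forced column by column and in order by the vertex-count clause of $\Lambda_{\beta,n,\W}$. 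That the result is a proper vertical array in $\PVA{n+1}{K}{\mathbf{R}}{\mathbf{s}}$ again reduces to \ref{lem:tree shape balance} for balance and to transporting the forests of $\beta$ and of $\alpha^{\prime\prime}$ onto rows $n$ and $n+1$ for the forest condition, and \ref{prop:Extraction/insertion inverses} plus the evident mutual inversion of the marking/unmarking steps (the re-marked columns are exactly the $P-R_{n}$ marked in the forward pass) shows this map and $\zeta$ are two-sided inverses. The hard part will be the forest-condition bookkeeping on row $n$: one must verify that the row-$n$ forest of $\alpha$ splits cleanly into the forest carried by $\beta$ — equivalently, by the arrows of $\alpha^{\prime\prime}$ — together with the roots created by marking cells and the leaves created by restoring the critical cross-vertices, and it is here that the ``arrows as hypothetical critical vertices'' device and \ref{fact:Proper Array Marking Cells} are designed to do the work.
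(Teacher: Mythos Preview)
Your proposal is correct and follows essentially the same approach as the paper: identify the critical versus non-critical cross-vertices in row $n$, mark the cells of the former (yielding $P$ marked cells), strip out the cross-vertices (recording the non-critical ones via the extraction procedure as $\W$), delete row $n+1$ to get $\beta$, and package the removed data into an arrowed array whose arrows replicate the row-$n$ forest function of $\beta$; the inverse is run the same way in reverse. Your verification that $\phi$ lands in $\H\cup\P$ via the forest condition of $\beta$, and that the row-$1$ forest function of the arrowed array coincides with $\psi_n^\alpha$, matches the paper's reasoning exactly; the only cosmetic difference is that the paper deletes row $n+1$ before extracting $\V$ and postpones the unmarking step to the very end of the inverse, whereas you interleave these, but the resulting objects are identical.
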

Note that we can apply this theorem to any pair of rows $i$ and $k$,
such that $i$ is a leaf in the support graph. Also, $s_{n}$ includes
the vertex pairs between rows $n$ and $n+1$, and the marked cells
in row $n$ of $\beta$ are given by $\R_{n}^{\prime}$, which is
a set of size $P$ that contains $\R_{n}$ as a subset.
\begin{proof}
We will prove the bijection by providing the decomposition and show
that it is invertible. Conceptually, we take the mixed pairs between
row $n$ and row $n+1$ of $\alpha$, and put them into an arrowed
array $\left(\sigma,\phi\right)$. Then, we add marked cells and arrows
to $\left(\sigma,\phi\right)$ in such a way that rows $n$ and $n+1$
of $\alpha$ have the same forest condition functions as rows 1 and
2 of $\left(\sigma,\phi\right)$, respectively. To record the position
of the non-critical vertices in row $n$, we extract and record these
vertices as a $s_{n+1}-P+R_{n}$-subset of $\left[s_{n}+R_{n}-1\right]$.
Finally, we mark the cells of $\alpha$ containing the critical vertices
of row $n$ that are paired with vertices of row $n+1$, so as to
preserve the forest condition for row $n$.

Let $\V$ be the set of non-critical vertices that are paired with
vertices of row $n+1$, and $\U$ be the set of critical vertices
that are paired with vertices of row $n+1$. Note that the vertices
of $\U$ and $\V$ must be in row $n$ by our assumption, and that
$\left|\U\cup\V\right|=s_{n+1}$. Therefore, if we let $P=R_{n}+\left|\U\right|$,
we have $R_{n}\le P\le K$. Furthermore, since $\left|\U\right|\le s_{n+1}$,
we have $P\le s_{n+1}+R_{n}$, which combines to give $R_{n}\le P\le\min\left(s_{n+1}+R_{n},K\right)$.

To construct the proper vertical array $\beta\in\PVA{n}{K}{\mathbf{R}^{\prime}}{\mathbf{s}^{\prime}}$
and the subset $\W\in\ksub{s_{n}+R_{n}-1}{s_{n+1}-P+R_{n}}$, we first
mark the cells containing the vertices of $\U$. Next, we unpair all
vertex pairs with one vertex in row $n+1$, delete row $n+1$, and
call the resulting array $\alpha^{\prime}$. As this leaves all other
mixed pairs unchanged, $\mathbf{s}^{\prime}$ describes the number
of mixed pairs of $\alpha^{\prime}$. Then, as the support graph of
$\mathbf{s}^{\prime}$ is the support graph of $\mathbf{s}$ with
the vertex $n+1$ removed, the support graph of $\mathbf{s}^{\prime}$
is also a tree. Also, note that deleting row $n+1$ removes the variables
$s_{n+1,k,j}$ and $s_{k,n+1,j}$ from $\alpha$, but leaves the remaining
$s_{i,k,j}$ the same for all $1\le i,k\le n$, $i\neq k$. Therefore,
the conditions of \ref{lem:tree shape balance} remain satisfied in
$\alpha^{\prime}$, so $\alpha^{\prime}$ satisfies the balance condition.
In addition, since we marked the cells containing $\U$, the forest
condition remains satisfied when we unpair the vertices of $\U\cup\V$
and delete row $n+1$. This means that $\alpha^{\prime}$ is a proper
partially-paired array. Next, we remove the vertices of $\U$ from
$\alpha^{\prime}$ to obtain the partially-paired array $\alpha^{\prime\prime}$,
and we extract $\V$ from $\alpha^{\prime\prime}$ as described in
\ref{proc:Labelling Procedure} to obtain the subset $\W$ and the
vertical array $\beta$. Note that $\alpha^{\prime\prime}$ has $R_{n}+\left|\U\right|$
marked cells, $s_{n}-\left|\U\right|$ total vertices, and $\left|\V\right|=s_{n}-P+R_{n}$
unpaired vertices in row $n$. Therefore, $\W$ is a $s_{n}-P+R_{n}$-subset
of $s_{n}+R_{n}-1$. Furthermore, by \ref{prop:Extraction/insertion inverses},
$\beta$ is also a proper paired array. By construction, $\beta$
satisfies the parameters $\mathbf{R}^{\prime}$ and $\mathbf{s}^{\prime}$,
and contains no non-mixed pairs, so $\beta\in\PVA{n}{K}{\mathbf{R}^{\prime}}{\mathbf{s}^{\prime}}$
as desired.

To preserve information on the pairs we removed, we construct an arrowed
array $\left(\sigma,\phi\right)\in\Lambda_{\beta,n,\W}$ such that
$\psi_{n}=\psi_{1}^{\prime}$ and $\psi_{n+1}=\psi_{2}^{\prime}$,
where $\psi_{n}$ and $\psi_{n+1}$ are the forest condition functions
for rows $n$ and $n+1$ of $\alpha$, while $\psi_{1}^{\prime}$
and $\psi_{2}^{\prime}$ are the forest condition functions for rows
1 and 2 of $\left(\sigma,\phi\right)$, respectively. For each vertex
$v\in\U\cup\V$ that is in cell $\left(n,j\right)$, we place a corresponding
$x_{v}$ into cell $\left(1,j\right)$ of $\sigma$. Similarly, for
each vertex $u$ in cell $\left(n+1,j\right)$, we place a corresponding
vertex $x_{u}$ in cell $\left(2,j\right)$ of $\sigma$. If we need
to place more than one vertex into the same cell, we place them in
the same order in $\sigma$ as they are in $\alpha$. Then, for each
pair $\left\{ v,u\right\} $ between row $n$ and $n+1$, we pair
their corresponding vertices $x_{u}$ and $x_{v}$ in $\sigma$. Next,
we mark cell $\left(1,j\right)$ of $\sigma$ if cell $\left(n,j\right)$
of $\alpha$ is marked, and we mark cell $\left(2,j\right)$ of $\sigma$
if cell $\left(n+1,j\right)$ of $\alpha$ is marked. Finally, suppose
$\left(n,j\right)$ of $\alpha$ contains a critical vertex $u\notin\U$.
Then, it must be paired with some vertex $v$ in some cell $\left(k,j^{\prime}\right)$,
where $1\le k\le n-1$. In this case, we let $\phi\left(j\right)=j^{\prime}$.
This completes the construction of $\left(\sigma,\phi\right)$.

By construction, $\left(\sigma,\phi\right)$ is in $\AR{K}{R_{n}}{R_{n+1}}{s_{n+1}}$,
as we copied all marked cells and vertex pairs between rows $n$ and
$n+1$ to $\left(\sigma,\phi\right)$. This also implies that cell
$\left(1,j\right)$ of $\sigma$ has $s_{n,n+1,j}$ vertices, and
cell $\left(2,j\right)$ of $\sigma$ has $s_{n+1,n,j}$ vertices.
By \ref{lem:tree shape balance}, we have $s_{n,n+1,j}=s_{n+1,n,j}$
for all $j$, so $\left(\sigma,\phi\right)$ satisfies the balance
condition. Furthermore, by replacing the critical pairs of row $n$
of $\alpha$ with arrows, we ensure that rows 1 and 2 of $\left(\sigma,\phi\right)$
have the same set of marked cells and forest condition functions as
rows $n$ and $n+1$ of $\alpha$. Therefore, $\left(\sigma,\phi\right)$
satisfies the forest condition, as $\alpha$ is a proper vertical
array.

Finally, we need to show that $\left(\sigma,\phi\right)\in\Lambda_{\beta,n,\W}=\left(\mathbf{x},\R_{n}^{\prime},\theta_{n}\right)$,
where $\theta_{n}$ is the forest condition function for row $n$
of $\beta$. By construction, $\theta_{n}\left(j\right)$ and $\phi\left(j\right)$
are only defined for cells $\left(n,j\right)$ with critical vertices
$u\notin\U$. Furthermore, we have $\theta_{n}\left(j\right)=\phi\left(j\right)$
in those cases, so $\theta_{n}=\phi$. Then, note that the set of
marked cells in row 1 of $\sigma$ is $\R_{n}$, which is a subset
of $\R_{n}^{\prime}$. Now, if we reinsert $\W$ into row $n$ of
$\beta$, we recover $\alpha^{\prime\prime}$ and the set $\V$ of
extracted vertices by \ref{prop:Extraction/insertion inverses}. These
are all the non-critical vertices in row 1 of $\left(\sigma,\phi\right)$,
in the same cells as $\V$. Furthermore, if cell $\left(n,j\right)$
is marked in $\beta$, then it must either be marked in $\alpha$,
in which case cell $\left(1,j\right)$ is marked in $\sigma$, or
contain a vertex $u\in\U$, in which case it is unmarked and contains
the vertex $x_{u}$. In both cases, $\left(\sigma,\phi\right)$ satisfies
$\mathbf{x}$ and $\R_{n}^{\prime}$. Therefore, we have $\left(\sigma,\phi\right)\in\Lambda_{\beta,n,\W}$,
as desired.

Conversely, let $\beta\in\PVA{n}{K}{\mathbf{R}^{\prime}}{\mathbf{s}^{\prime}}$,
$\W\in\ksub{s_{n}+R_{n}-1}{s_{n+1}-P+R_{n}}$, and $\left(\sigma,\phi\right)\in\AR{K}{R_{n}}{R_{n+1}}{s_{n+1}}$
that satisfies $\Lambda_{\beta,n,\W}$. We first construct partially-paired
array $\beta^{\prime}$ by inserting $\W$ into row $n$ of $\beta$
as described in \ref{proc:Labelling Procedure}. This gives us a set
$\V$ of unpaired vertices in $\beta^{\prime}$, labelled with the
elements of $\W$. By \ref{prop:Extraction/insertion inverses}, $\beta^{\prime}$
is a proper partially-paired array. Furthermore, by the definition
of $\Lambda_{\beta,n,\W}$, the vertices of $\V$ are in the same
columns as the non-critical vertices in row 1 of $\sigma$. Therefore,
for each vertex $v\in\V$, we can let $x_{v}$ be the non-critical
vertex in row 1 of $\sigma$ that corresponds to $v$. Next, consider
each cell $\left(1,j\right)$ of $\sigma$ that contains a critical
vertex. Since $\Lambda_{\beta,n,\W}$ is $\Lambda$-compatible with
$\beta$, cell $\left(n,j\right)$ is marked in $\beta$, and by extension
$\beta^{\prime}$. This means that we can add an unpaired vertex $u$
to cell $\left(n,j\right)$, which we place to the right of all other
vertices in that cell. Similarly to the vertices of $\V$, we let
the corresponding vertex in cell $\left(1,j\right)$ of $\sigma$
be $x_{u}$. After adding these vertices, we let the resulting partially-paired
array be $\beta^{\prime\prime}$, and let the set of vertices added
to obtain $\beta^{\prime\prime}$ be $\U$. By \ref{prop:Extraction/insertion inverses},
$\beta^{\prime\prime}$ is a proper partially-paired array. Since
row $n$ of $\beta^{\prime\prime}$ has $P$ marked cells, while row
1 of $\sigma$ has $R_{n}$ marked cells, we have $\left|\U\right|=P-R_{n}$.
Also, since $\W$ is a $\left(s_{n+1}-P+R_{n}\right)$-subset, we
have $\left|\U\right|+\left|\V\right|=s_{n+1}$ as desired.

Next, we extend $\beta^{\prime\prime}$ by adding row $n+1$. For
each cell $\left(2,j\right)$ of $\sigma$ that is marked, we mark
cell $\left(n+1,j\right)$ of $\beta^{\prime\prime}$. Similarly,
for each vertex $x_{v}$ in cell $\left(2,j\right)$ of $\sigma$,
we add a corresponding vertex $v$ in row $\left(n+1,j\right)$ of
$\beta^{\prime\prime}$. Then, for each pair $\left\{ x_{u},x_{v}\right\} $
in $\sigma$, we pair their corresponding vertices $u\in\U\cup\V$
and $v$ in row $n+1$. Finally, we unmark the cells containing the
vertices of $\U$ to recover $\alpha$. By construction, $\alpha$
satisfies the parameters $\mathbf{R}$ and $\mathbf{s}$, and contains
no non-mixed pairs, so $\alpha\in\PVA{n}{K}{\mathbf{R}}{\mathbf{s}}$
as desired.

As with the other direction, we copied all marked cells and vertex
pairs of $\left(\sigma,\phi\right)$ into rows $n$ and $n+1$ of
$\alpha$. As $\sigma$ satisfies the balance condition, we have $s_{n,n+1,j}=s_{n+1,n,j}$
and $s_{n+1,k,j}=s_{k,n+1,j}=0$ for $k<n$. By \ref{lem:tree shape balance},
the fact that $\beta$ satisfies the balance condition means that
$s_{i,k,j}^{\prime}=s_{k,i,j}^{\prime}$ for all $1\le i<k\le n$
and $1\le j\le K$. Hence, $\alpha$ satisfies the balance condition.
By the compatibility condition, $\phi$ is the same as $\theta_{n}$,
the forest condition function for row $n$ of $\beta$. Furthermore,
by replacing marked cells with critical pairs of $\left(\sigma,\phi\right)$,
we have ensured that rows $n$ and $n+1$ of $\alpha$ have the same
set of marked cells and forest condition functions as rows 1 and 2
of $\left(\sigma,\phi\right)$. As the forest condition of the other
rows are unchanged, $\alpha$ is a proper vertical array.

Finally, we have to show that the two operations presented are inverses
of each other. By \ref{prop:Extraction/insertion inverses}, the extraction
and insertion procedures are inverses. Furthermore, if we extract
$\V$ and reinsert it, the vertices inserted acquire the same labels
as before the extraction. Therefore, we can correspond the non-critical
vertices in row 1 of $\left(\sigma,\phi\right)$ with the vertices
of $\V$. Then, the columns which contain the critical vertices $\U$
are exactly the columns of $\left(\sigma,\phi\right)$ that contain
critical vertices in row 1. This allows us to recover the columns
of $\U$, so that we can add critical vertices and unmark cells. Similarly,
the vertices in row 2 of $\left(\sigma,\phi\right)$ correspond to
the vertices of row $n+1$ of $\alpha$. As we have a correspondence
between the vertices of $\U\cup\V$ and vertices of row $n+1$ with
the vertices in row 1 and 2 of $\left(\sigma,\phi\right)$, respectively,
we can recover the pairing of the removed vertices via the pairing
of vertices in $\left(\sigma,\phi\right)$. Therefore, $\zeta$ as
described, is a bijection.
\end{proof}
Note that in the proof of \ref{thm:Vertical Array Decomposition},
$\alpha^{\prime}$ and $\beta^{\prime\prime}$ correspond to each
other, so does $\alpha^{\prime\prime}$ and $\beta^{\prime}$. Also,
the decomposition works with any row that is a leaf vertex of the
support graph. With this decomposition, we can iteratively pick a
row where the support graph of $\mathbf{s}$ is a leaf, and remove
that row. This leaves arrowed arrays with support graph $\mathbf{s}^{\prime}$,
which is a tree with $n$ rows, so we can repeat the process.

As an example, we will decompose the tree-shaped vertical array in
\ref{fig:3 row Vertical Array}. By following the decomposition described
in \ref{thm:Vertical Array Decomposition}, we can decompose row 3
and arrive at the partially-paired array $\alpha^{\prime}$ and $\alpha^{\prime\prime}$,
as depicted in \ref{fig:Vertical Array Labelled}. For clarity, we
have marked the vertices of $\U$ and $\V$ in $\alpha^{\prime}$,
and labelled the objects in row 2 of $\alpha^{\prime\prime}$. After
the decomposition, we obtain the minimal array $\beta$ and the arrowed
array $\left(\sigma,\phi\right)$, depicted in \ref{fig:Vertical Array Decomposition},
as well as the subset $\W=\left\{ 1,2,5\right\} \in\ksub{8}{3}$ and
the value $P=3$.

\begin{figure}
\begin{centering}
\includegraphics{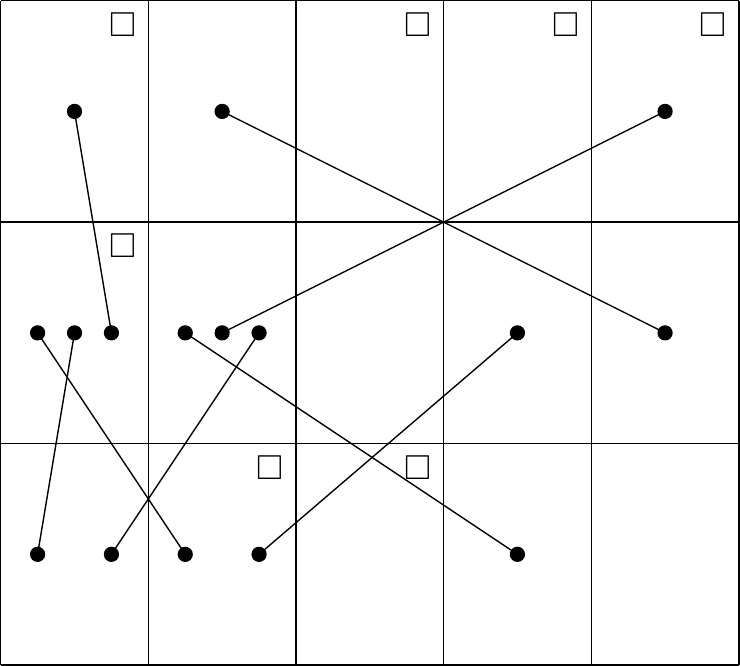}
\par\end{centering}
\caption{\label{fig:3 row Vertical Array}A tree-shaped, 3-row vertical array}
\end{figure}

\begin{figure}
\begin{centering}
\includegraphics{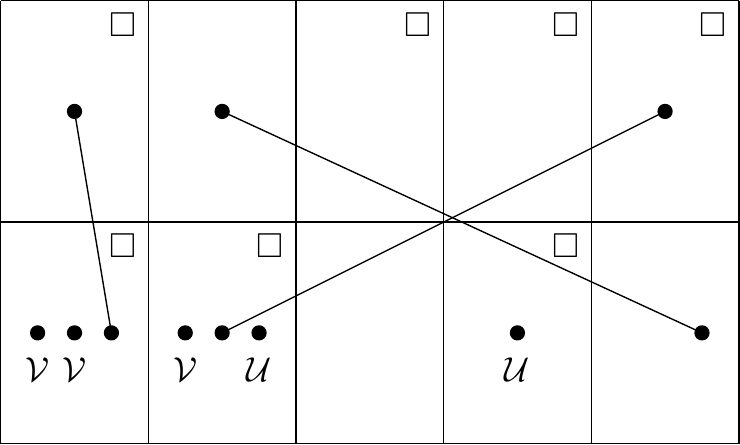}$\qquad$\includegraphics{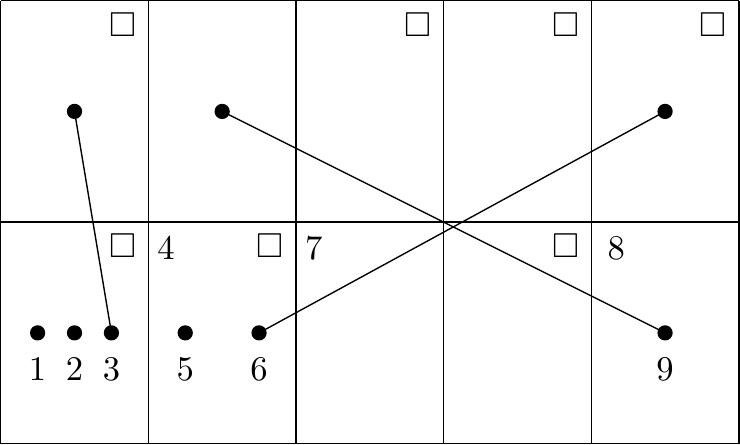}
\par\end{centering}
\caption{\label{fig:Vertical Array Labelled}Partially-paired array $\alpha^{\prime}$
and $\alpha^{\prime\prime}$ corresponding to the decomposition of
row 3 of \ref{fig:3 row Vertical Array}}
\end{figure}

\begin{figure}
\begin{centering}
\includegraphics{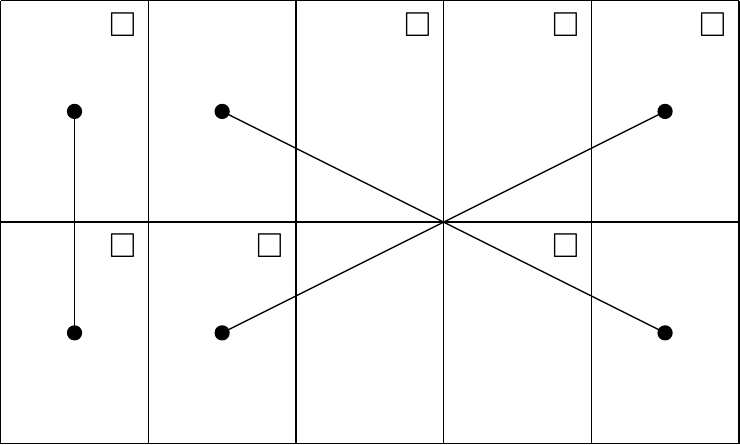}$\qquad$\includegraphics{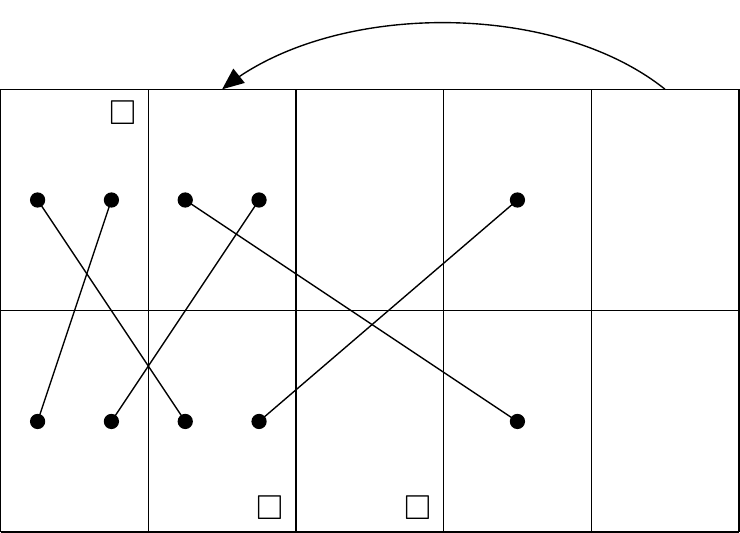}
\par\end{centering}
\caption{\label{fig:Vertical Array Decomposition}Vertical array $\beta$ and
arrowed array $\left(\sigma,\phi\right)$ corresponding to the decomposition
of row 3 of \ref{fig:3 row Vertical Array}}
\end{figure}

Now that we have a decomposition of tree-shaped vertical arrays, we
can provide an explicit formula for $v_{n,K;\mathbf{R}}^{\left(\mathbf{s}\right)}$
via induction. We start with the following corollary.
\begin{cor}
\label{cor:Vertical Array Decomposition}Let $n,K\ge1$, $\mathbf{s}=\left(s_{1,2},s_{1,3},\dots,s_{n,n+1}\right)\ge\mathbf{0}$,
and $\mathbf{R}=\left(R_{1},\dots,R_{n+1}\right)\in\left[K\right]^{n+1}$.
Suppose the support graph of $\mathbf{s}$ is a tree with the vertex
$n+1$ as a leaf adjacent to the vertex $n$. Then, 
\begin{eqnarray*}
v_{n+1,K;\mathbf{R}}^{\left(\mathbf{s}\right)} & = & \sum_{P=R_{n}}^{\min\left(s_{n+1}+R_{n},K\right)}\sum_{A_{n+1}=0}^{\min\left(s_{n+1},K\right)-1}\binom{s_{n}+R_{n}-1}{s_{n+1}-P+R_{n}}v_{n,K;\mathbf{R}^{\prime}}^{\left(\mathbf{s}^{\prime}\right)}\times\\
 &  & \frac{\left(s_{n+1}-P+R_{n}\right)\left(K-A_{n+1}-1\right)!\left(s_{n+1}-A_{n+1}-1\right)!\left(P-1\right)!}{\left(P-R_{n}-A_{n+1}\right)!\left(K-R_{n+1}-A_{n+1}\right)!\left(R_{n}-1\right)!\left(R_{n+1}-1\right)!}
\end{eqnarray*}
where $\mathbf{s}^{\prime}$ is $\mathbf{s}$ restricted to an $n\times n$
matrix by removing the last row and column, $s_{i}=\sum_{k\neq i}s_{i,k}$
for $1\le i\le n+1$, and $\mathbf{R}^{\prime}$ is a vector of length
$n$ given by 
\begin{eqnarray*}
R_{k}^{\prime} & = & \begin{cases}
R_{k} & k<n\\
P & k=n
\end{cases}
\end{eqnarray*}
\end{cor}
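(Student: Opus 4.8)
The plan is to combine the bijection $\zeta$ of \ref{thm:Vertical Array Decomposition} with the enumeration formula of \ref{thm:Substructure Lambda Formula}. Since $v_{n+1,K;\mathbf{R}}^{\left(\mathbf{s}\right)}=\left|\PVA{n+1}{K}{\mathbf{R}}{\mathbf{s}}\right|$, and $\zeta$ identifies this set with the collection of triples $\left(\beta,\W,\left(\sigma,\phi\right)\right)$ in which $R_{n}\le P\le\min\left(s_{n+1}+R_{n},K\right)$, $\beta\in\PVA{n}{K}{\mathbf{R}^{\prime}}{\mathbf{s}^{\prime}}$, $\W\in\ksub{s_{n}+R_{n}-1}{s_{n+1}-P+R_{n}}$, and $\left(\sigma,\phi\right)$ is an arrowed array satisfying the substructure $\Lambda_{\beta,n,\W}$, I would first record
\[
v_{n+1,K;\mathbf{R}}^{\left(\mathbf{s}\right)}=\sum_{P=R_{n}}^{\min\left(s_{n+1}+R_{n},K\right)}\;\sum_{\beta\in\PVA{n}{K}{\mathbf{R}^{\prime}}{\mathbf{s}^{\prime}}}\;\sum_{\W\in\ksub{s_{n}+R_{n}-1}{s_{n+1}-P+R_{n}}}T\left(\Lambda_{\beta,n,\W}\right),
\]
where $T\left(\Lambda_{\beta,n,\W}\right)$ denotes the number of arrowed arrays satisfying $\Lambda_{\beta,n,\W}$.

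Next I would argue that $T\left(\Lambda_{\beta,n,\W}\right)$ depends only on $P$. By \ref{thm:Vertical Array Decomposition}, $\Lambda_{\beta,n,\W}$ is a substructure of $\AR{K}{R_{n}}{R_{n+1}}{s_{n+1}}$ of the form $\left(\mathbf{x},\R_{n}^{\prime},\theta_{n}\right)$, where $\R_{n}^{\prime}$ is the set of $P$ marked cells in row $n$ of $\beta$ and $\theta_{n}$ is the forest condition function of that row. Since $\beta$ is a proper vertical array, the functional digraph of $\theta_{n}$ on $\H\cup\R_{n}^{\prime}$ is a rooted forest with root set $\R_{n}^{\prime}$, and by the insertion procedure of \ref{proc:Labelling Procedure} the vertices recorded by $\W$ land only in columns that already carry an object of row $n$ of $\beta$, so $\mathbf{x}$ is supported on $\H\cup\R_{n}^{\prime}$ with $\sum_{j}x_{j}=\left|\W\right|=s_{n+1}-P+R_{n}$. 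Hence $\Lambda_{\beta,n,\W}$ meets the hypotheses of \ref{thm:Substructure Lambda Formula}, and applying that theorem with $s=s_{n+1}$, $R_{1}=R_{n}$, $R_{2}=R_{n+1}$, and $P$ equal to the number of columns of $\R_{n}^{\prime}$ gives a value for $T\left(\Lambda_{\beta,n,\W}\right)$ that involves neither $\beta$ nor the vertex-distribution vector $\mathbf{x}$; it is exactly the second line of the asserted formula once the summation variable $A$ of \ref{thm:Substructure Lambda Formula} is renamed $A_{n+1}$.

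Finally I would collapse the two inner summations. For fixed $P$ the vector $\mathbf{R}^{\prime}$ is determined (namely $R_{k}^{\prime}=R_{k}$ for $k<n$ and $R_{n}^{\prime}=P$), so $\sum_{\beta}1=\left|\PVA{n}{K}{\mathbf{R}^{\prime}}{\mathbf{s}^{\prime}}\right|=v_{n,K;\mathbf{R}^{\prime}}^{\left(\mathbf{s}^{\prime}\right)}$, while the number of subsets $\W$ is $\binom{s_{n}+R_{n}-1}{s_{n+1}-P+R_{n}}$. Substituting these counts and the value of $T\left(\Lambda_{\beta,n,\W}\right)$ into the triple sum produces precisely the claimed identity. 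The only step that requires genuine care is the verification in the middle paragraph that $\Lambda_{\beta,n,\W}$ satisfies the structural hypotheses of \ref{thm:Substructure Lambda Formula} — that $\mathbf{x}$ is supported on $\H\cup\R_{n}^{\prime}$ and that $\theta_{n}$ has the rooted-forest structure — but both facts follow immediately from $\beta$ being a proper vertical array together with the construction of $\Lambda_{\beta,n,\W}$ in \ref{thm:Vertical Array Decomposition}; everything else is bookkeeping and a relabelling of indices.
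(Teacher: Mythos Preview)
Your argument is correct and follows essentially the same route as the paper: apply the bijection of \ref{thm:Vertical Array Decomposition}, observe that the forest condition on row $n$ of $\beta$ makes $\Lambda_{\beta,n,\W}$ eligible for \ref{thm:Substructure Lambda Formula}, note that the resulting value of $T\left(\Lambda_{\beta,n,\W}\right)$ depends only on $P$, and then collapse the sums over $\beta$ and $\W$ into $v_{n,K;\mathbf{R}^{\prime}}^{\left(\mathbf{s}^{\prime}\right)}$ and the binomial coefficient. Your explicit check that $\mathbf{x}$ is supported on $\H\cup\R_{n}^{\prime}$ is a welcome bit of care that the paper leaves implicit.
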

\begin{proof}
Let $P$ be such that $R_{n}\le P\le\min\left(s_{n+1}+R_{n},K\right)$,
$\beta\in\PVA{n}{K}{\mathbf{R}^{\prime}}{\mathbf{s}^{\prime}}$ be
an $n$-row vertical array with parameters as defined in \ref{thm:Vertical Array Decomposition},
$\W$ be a $\left(s_{n+1}-P+R_{n}\right)$-subset of $\left[s_{n}+R_{n}-1\right]$,
and $\Lambda_{\beta,n,\W}$ be a substructure of $\AR{K}{R_{n}}{R_{n+1}}{s_{n+1}}$.
As $\beta$ is a proper vertical array, the forest condition function
$\theta_{n}$ for row $n$ is a forest with root vertices $\P=\R_{n}^{\prime}$.
Therefore, by applying \ref{thm:Substructure Lambda Formula}, we
have
\begin{eqnarray*}
T\left(\Lambda_{\beta,n,\W}\right) & = & \sum_{A_{n+1}=0}^{\min\left(s_{n+1},K\right)-1}\frac{\left(s_{n+1}-P+R_{n}\right)\left(K-A_{n+1}-1\right)!\left(s_{n+1}-A_{n+1}-1\right)!\left(P-1\right)!}{\left(P-R_{n}-A_{n+1}\right)!\left(K-R_{n+1}-A_{n+1}\right)!\left(R_{n}-1\right)!\left(R_{n+1}-1\right)!}
\end{eqnarray*}
Note that this formula is independent of $\beta$ and $\W$, and only
depends on $P$. Furthermore, the constraint $R_{n}\le P\le\min\left(s_{n+1}+R_{n},K\right)$
matches with the definition of substructure $\Lambda$. Then, for
a given $P$, there are $\binom{s_{n}+R_{n}-1}{s_{n+1}-P+R_{n}}$
distinct $\left(s_{n+1}-P+R_{n}\right)$-subsets of $\left[s_{n}+R_{n}-1\right]$.
Finally, for a given $R_{n}^{\prime}=P$, there are $v_{n,K;\mathbf{R}^{\prime}}^{\left(\mathbf{s}^{\prime}\right)}$
proper vertical arrays. Combining these gives the formula of our corollary
as desired.
\end{proof}
As we have assumed that the support graph $G$ of $\mathbf{s}$ is
a tree, we can repeatedly select a row that corresponds to a leaf
vertex in $G$, and iterate the decomposition in \ref{thm:Vertical Array Decomposition}.
Then, by taking the cardinality of both sides, we obtain the following
theorem.
\begin{thm}
\label{thm:Vertical Array Formula}Let $n,K\ge1$, $\mathbf{s}\ge\mathbf{0}$,
and $\mathbf{R}\ge\mathbf{1}$. Suppose the support graph $G$ of
$\mathbf{s}$ is a tree. Then,
\begin{eqnarray*}
v_{n,K;\mathbf{R}}^{\left(\mathbf{s}\right)} & = & \sum_{A_{e_{1}}=0}^{\min\left(s_{e_{1}},K\right)-1}\cdots\sum_{A_{e_{n-1}}=0}^{\min\left(s_{e_{n-1}},K\right)-1}\left[\prod_{j=1}^{n-1}\frac{\left(K-A_{e_{j}}-1\right)!}{\left(K+s_{e_{j}}-A_{e_{j}}-1\right)!}\times\right.\\
 &  & \left.\prod_{i=1}^{n}\frac{\left(K+\sum_{k\sim i}\left(s_{i,k}-A_{i,k}-1\right)\right)!\left(R_{i}-1+\sum_{k\sim i}s_{i,k}\right)!}{\left(R_{i}-1\right)!\left(K-R_{i}-\sum_{k\sim i}A_{i,k}\right)!\left(R_{i}+\sum_{k\sim i}\left(s_{i,k}-1\right)\right)!}\right]
\end{eqnarray*}
where $e_{1},\dots,e_{n-1}$ are the edges of $G$. Furthermore, for
each edge $e_{j}=\left\{ i,k\right\} $ in $G$, the summation variable
$A_{e_{j}}$ can be equivalently written as $A_{i,k}$ and $A_{k,i}$.
Finally, the sum $\sum_{k\sim i}$ is over all indices $k$ that are
adjacent to $i$ in the support graph of $\mathbf{s}$.
\end{thm}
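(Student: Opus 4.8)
The plan is to induct on $n$, using Corollary~\ref{cor:Vertical Array Decomposition} as the recursive step. For the base case $n=1$ the support graph has no edges and $\mathbf{s}$ is empty, so a one‑row proper vertical array has no mixed pairs, hence no vertices at all, and it is proper automatically; thus $v_{1,K;(R_1)}^{()}$ simply counts the $\binom{K}{R_1}$ ways of choosing $R_1$ marked columns. On the right‑hand side the product over edges is empty and the lone vertex factor, with every $\sum_{k\sim 1}$ sum empty, collapses to $K!\,(R_1-1)!/[(R_1-1)!\,(K-R_1)!\,R_1!]=\binom{K}{R_1}$; if $R_1>K$ both sides vanish by the factorial conventions. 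So the base case is immediate.

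For the inductive step I would take a tree $G$ on $n+1$ vertices, relabel (legitimately, since the formula depends only on the labelled tree with its data $\mathbf{s},\mathbf{R}$) so that $n+1$ is a leaf with unique neighbour $n$, set $e_n=\{n,n+1\}$, and note that $G':=G\setminus\{n+1\}$ is a tree with edges $e_1,\dots,e_{n-1}$. Corollary~\ref{cor:Vertical Array Decomposition} writes $v_{n+1,K;\mathbf{R}}^{(\mathbf{s})}$ as a double sum over $P$ and $A_{n+1}$ of $\binom{s_n+R_n-1}{s_{n+1}-P+R_n}\,v_{n,K;\mathbf{R}'}^{(\mathbf{s}')}$ times an explicit rational factor, where $\mathbf{R}'$ equals $\mathbf{R}$ except that its $n$‑th entry is $P$. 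Into this I substitute the inductive formula for $v_{n,K;\mathbf{R}'}^{(\mathbf{s}')}$ and identify the index $A_{n+1}$ of the recursion with $A_{e_n}$ of the target. Since $n+1$ is adjacent only to $n$, the neighbourhoods in $G$ and $G'$ of each vertex $i\le n-1$ coincide, so all edge factors for $e_1,\dots,e_{n-1}$ and all vertex factors for $i\le n-1$ carry over verbatim; the only real work is the interaction, through $P$, among the recursion's rational factor, the binomial $\binom{s_n+R_n-1}{s_{n+1}-P+R_n}$, and the $i=n$ vertex factor of the inductive formula (the one place the substitution $R_n\rightsquigarrow P$ matters).

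Concretely, writing $s=s_{e_n}=s_{n+1}$, $A=A_{e_n}$, $a=\sum_{k\sim n,\,k\neq n+1}A_{n,k}$, $\sigma=\sum_{k\sim n,\,k\neq n+1}s_{n,k}$ (so $s_n=\sigma+s$), and $d$ for the number of neighbours of $n$ in $G'$, I would collect all $P$‑dependent factorials: the explicit factor $(s_{n+1}-P+R_n)$ cancels one factorial from the binomial denominator, the $(P-1)!$ of the recursion cancels one from the $i=n$ factor, and $(P-1+\sigma)!$ cancels $(\sigma+P-1)!$, leaving exactly
\[
\sum_{P}\frac{1}{(s+R_n-P-1)!\,(P-R_n-A)!\,(K-a-P)!\,(P+\sigma-d)!}.
\]
Substituting $P=R_n+A+\ell$ makes this a single application of the Chu--Vandermonde identity, with value
\[
\frac{(K+s+\sigma-a-d-A-1)!}{(K-R_n-a-A)!\,(R_n+s+\sigma-d-1)!\,(s-A-1)!\,(K+\sigma-a-d)!}.
\]
Reassembling everything, the $(K+\sigma-a-d)!$ and $(s-A-1)!$ factors cancel, and after using that the $i=n+1$ leaf factor telescopes (its numerator $(R_{n+1}-1+s)!$ equals the denominator piece $(R_{n+1}+s-1)!$), a direct comparison shows the surviving product is exactly the edge‑$e_n$ factor times the $i=n$ and $i=n+1$ vertex factors of the target formula for $G$. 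This closes the induction.

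The main obstacle is organizational rather than conceptual: keeping straight the correspondence between the summation variables and factorial factors of the $n$‑vertex formula and those of the $(n+1)$‑vertex formula, given that the marking parameter of row $n$ shifts from $R_n$ to the summed variable $P$ and that the neighbourhood of $n$ gains the vertex $n+1$, so the $i=n$ factor changes shape while the edge‑$e_n$ and $i=n+1$ factors materialize from the Corollary. Once the $P$‑sum is isolated, the remaining step is a one‑line Chu--Vandermonde evaluation, and the $n=2$ instance can be checked directly against Theorem~\ref{thm:Goulden-Slofstra} and \cite{Chan:2017-1}.
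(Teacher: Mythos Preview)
Your proposal is correct and follows essentially the same route as the paper: induction on $n$ with the $n=1$ base case, then Corollary~\ref{cor:Vertical Array Decomposition} for the recursion, isolation of the $P$-dependent factorials, the substitution $P=R_n+A_{n+1}+\ell$ (which is exactly the paper's two-step shift $P\mapsto P-R_n$ followed by $P=Q+A_{n+1}$), and a single Chu--Vandermonde evaluation. Your bookkeeping of the cancellations and the leaf telescoping at $i=n+1$ matches the paper's computation line by line.
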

For example, if $n=3$ and $s_{1,2}=0$, the formula reduces to
\begin{eqnarray}
v_{n,K;\mathbf{R}}^{\left(\mathbf{s}\right)} & = & \sum_{A_{1,3}=0}^{\min\left(s_{1,3},K\right)-1}\sum_{A_{2,3}=0}^{\min\left(s_{2,3},K\right)-1}\left[\frac{\left(K-A_{1,3}-1\right)!\left(K-A_{2,3}-1\right)!}{\left(R_{1}-1\right)!\left(R_{2}-1\right)!\left(R_{3}-1\right)!\left(K-R_{1}-A_{1,3}\right)!}\times\right.\nonumber \\
 &  & \left.\frac{\left(K+s_{1,3}+s_{2,3}-A_{1,3}-A_{2,3}-2\right)!\left(R_{3}+s_{1,3}+s_{2,3}-1\right)!}{\left(K-R_{2}-A_{2,3}\right)!\left(K-R_{3}-A_{1,3}-A_{2,3}\right)!\left(R_{3}+s_{1,3}+s_{2,3}-2\right)!}\right]\label{eq:v3 example}
\end{eqnarray}
As we shall later see, we can remove the upper bounds of $K-1$, but
upper bounds of $s_{e_{j}}-1$ are necessary and cannot be removed.
\begin{proof}
Note that if $R_{i}>K$ for some $i$, the term $\left(K-R_{i}-\sum_{k\sim i}A_{i,k}\right)!$
in the denominator causes the entire sum to be zero. Otherwise, we
prove this theorem via induction on the number of rows.

\textbf{Base case:}

Suppose $n=1$, then we have no vertices, so there are $\binom{K}{R_{1}}$
vertical arrays in $\PVA{1}{K}{R_{1}}{\mathbf{s}}$. This matches
with our formula for $v_{1,K;R_{1}}^{\left(\mathbf{s}\right)}$, as
the variables $s_{e_{j}}$ and summations $A_{e_{j}}$ do not appear.

\textbf{Inductive step:}

Let $\mathbf{s}=\left(s_{1,2},s_{1,3},\dots,s_{n,n+1}\right)\ge\mathbf{0}$,
$\mathbf{R}=\left(R_{1},\dots,R_{n+1}\right)\ge\mathbf{1}$, and assume
that the functional digraph of $\mathbf{s}$ has vertex $n+1$ as
a leaf, and is adjacent to vertex $n$. Then, for convenience of notation,
let $s_{i}=\sum_{k\sim i}s_{i,k}$, $A_{i}=\sum_{k\sim i}A_{i,k}$,
and $\delta_{i}=\sum_{k\sim i}1$ for $1\le i\le n+1$. Furthermore,
let the $e_{1},\dots,e_{n}$ be the edges of the support graph of
$\mathbf{s}$, with $e_{n}$ being the edge between vertex $n$ and
$n+1$. This means that $s_{n+1}=s_{n,n+1}$, $A_{n+1}=A_{n,n+1}$,
and for $1\le i\le n-1$, $A_{i}$ does not contain the variable $A_{n,n+1}$.

By applying $s_{n}^{\prime}=s_{n}-s_{n+1}$ to our inductive hypothesis,
we have

\begin{eqnarray*}
v_{n,K;\mathbf{R}^{\prime}}^{\left(\mathbf{s}^{\prime}\right)} & = & \sum_{A_{e_{1}}=0}^{\min\left(s_{e_{1}},K\right)-1}\cdots\sum_{A_{e_{n-1}}=0}^{\min\left(s_{e_{n-1}},K\right)-1}\left[\prod_{j=1}^{n-1}\frac{\left(K-A_{e_{j}}-1\right)!}{\left(K+s_{e_{j}}-A_{e_{j}}-1\right)!}\times\right.\\
 &  & \prod_{i=1}^{n-1}\frac{\left(K+s_{i}-A_{i}-\delta_{i}\right)!\left(R_{i}+s_{i}-1\right)!}{\left(R_{i}-1\right)!\left(K-R_{i}-A_{i}\right)!\left(R_{i}+s_{i}-\delta_{i}\right)!}\times\\
 &  & \left.\frac{\left(K+s_{n}-s_{n+1}-A_{n}+A_{n+1}-\delta_{n}+1\right)!\left(P+s_{n}-s_{n+1}-1\right)!}{\left(P-1\right)!\left(K-P-A_{n}+A_{n+1}\right)!\left(P+s_{n}-s_{n+1}-\delta_{n}+1\right)!}\right]
\end{eqnarray*}
Note that $A_{n}$ and $\delta_{n}$ are substituted with $A_{n}-A_{n+1}$
and $\delta_{n}-1$, respectively, as the support graph of $\mathbf{s}^{\prime}$
does not contain the edge $e_{n}=\left\{ n,n+1\right\} $. To simplify
the expression for further manipulation, we let $C\left(A_{e_{1}},\dots,A_{e_{n-1}}\right)$
to be the first two products inside the sum. That is, we rewrite the
above expression as 
\begin{eqnarray*}
v_{n,K;\mathbf{R}^{\prime}}^{\left(\mathbf{s}^{\prime}\right)} & = & \sum_{A_{e_{1}},\dots,A_{e_{n-1}}}C\left(A_{e_{1}},\dots,A_{e_{n-1}}\right)\times\\
 &  & \frac{\left(K+s_{n}-s_{n+1}-A_{n}+A_{n+1}-\delta_{n}+1\right)!\left(P+s_{n}-s_{n+1}-1\right)!}{\left(P-1\right)!\left(K-P-A_{n}+A_{n+1}\right)!\left(P+s_{n}-s_{n+1}-\delta_{n}+1\right)!}
\end{eqnarray*}
Then, we can substitute this expression into \ref{cor:Vertical Array Decomposition},
which gives
\begin{eqnarray*}
v_{n+1,K;\mathbf{R}}^{\left(\mathbf{s}\right)} & = & \sum_{A_{e_{1}},\dots,A_{e_{n-1}}}\sum_{A_{n+1}=0}^{\min\left(s_{n+1},K\right)-1}\sum_{P=0}^{\min\left(s_{n+1},K-R_{n}\right)}C\left(A_{e_{1}},\dots,A_{e_{n-1}}\right)\times\\
 &  & \frac{\left(K+s_{n}-s_{n+1}-A_{n}+A_{n+1}-\delta_{n}+1\right)!\left(s_{n}+R_{n}-1\right)!}{\left(K-P-R_{n}-A_{n}+A_{n+1}\right)!\left(P+R_{n}+s_{n}-s_{n+1}-\delta_{n}+1\right)!}\times\\
 &  & \frac{\left(K-A_{n+1}-1\right)!\left(s_{n+1}-A_{n+1}-1\right)!}{\left(s_{n+1}-P-1\right)!\left(P-A_{n+1}\right)!\left(K-R_{n+1}-A_{n+1}\right)!\left(R_{n}-1\right)!\left(R_{n+1}-1\right)!}
\end{eqnarray*}
after shifting the summation index $P$ down by $R_{n}$. As the terms
$\left(s_{n+1}-P-1\right)!$ and $\left(K-P-R_{n}-A_{n}+A_{n+1}\right)!$
are in the denominator, the summation term is zero if $P>\min\left(s_{n+1},K-R_{n}\right)$,
noting that $A_{n}\ge A_{n+1}$. Hence, we can safely increase the
upper bound of the $P$ summation to infinity. Furthermore, if $P<A_{n+1}$,
then the summation term is also zero, as we have $\left(P-A_{n+1}\right)!$
in the denominator. This allows us to substitute $P=Q+A_{n+1}$ and
sum over $Q\ge0$ instead. By doing these substitutions, we obtain
\begin{eqnarray*}
v_{n+1,K;\mathbf{R}}^{\left(\mathbf{s}\right)} & = & \sum_{A_{e_{1}},\dots,A_{e_{n-1}}}\sum_{A_{n+1}=0}^{\min\left(s_{n+1},K\right)-1}\sum_{Q\ge0}C\left(A_{e_{1}},\dots,A_{e_{n-1}}\right)\times\\
 &  & \frac{\left(K+s_{n}-s_{n+1}-A_{n}+A_{n+1}-\delta_{n}+1\right)!\left(s_{n}+R_{n}-1\right)!}{\left(K-Q-R_{n}-A_{n}\right)!\left(Q+R_{n}+A_{n+1}+s_{n}-s_{n+1}-\delta_{n}+1\right)!}\times\\
 &  & \frac{\left(K-A_{n+1}-1\right)!\left(s_{n+1}-A_{n+1}-1\right)!}{\left(s_{n+1}-A_{n+1}-Q-1\right)!Q!\left(K-R_{n+1}-A_{n+1}\right)!\left(R_{n}-1\right)!\left(R_{n+1}-1\right)!}\\
 & = & \sum_{A_{e_{1}},\dots,A_{e_{n-1}}}\sum_{A_{n+1}=0}^{\min\left(s_{n+1},K\right)-1}C\left(A_{e_{1}},\dots,A_{e_{n-1}}\right)\times\\
 &  & \frac{\left(K+s_{n}-A_{n}-\delta_{n}\right)!\left(R_{n}+s_{n}-1\right)!\left(K-A_{n+1}-1\right)!}{\left(R_{n}-1\right)!\left(K-R_{n}-A_{n}\right)!\left(R_{n}+s_{n}-\delta_{n}\right)!\left(K-R_{n+1}-A_{n+1}\right)!\left(R_{n+1}-1\right)!}
\end{eqnarray*}
using the Chu-Vandermonde identity (pg. 67 of \cite{Andrews-Askey-Roy:1999}).
By noting that $\delta_{n+1}=1$ and simplifying the formula we obtained
for $v_{n+1,K;\mathbf{R}}^{\left(\mathbf{s}\right)}$, we obtain
\begin{eqnarray*}
v_{n+1,K;\mathbf{R}}^{\left(\mathbf{s}\right)} & = & \sum_{A_{e_{1}}=0}^{\min\left(s_{e_{1}},K\right)-1}\cdots\sum_{A_{e_{n}}=0}^{\min\left(s_{e_{n}},K\right)-1}\left[\prod_{j=1}^{n}\frac{\left(K-A_{e_{j}}-1\right)!}{\left(K+s_{e_{j}}-A_{e_{j}}-1\right)!}\times\right.\\
 &  & \left.\prod_{i=1}^{n+1}\frac{\left(K+s_{i}-A_{i}-\delta_{i}\right)!\left(R_{i}+s_{i}-1\right)!}{\left(R_{i}-1\right)!\left(K-R_{i}-A_{i}\right)!\left(R_{i}+s_{i}-\delta_{i}\right)!}\right]
\end{eqnarray*}
which proves our induction as desired.
\end{proof}
To remove the upper bounds of $K-1$ in \ref{thm:Vertical Array Formula},
we will for each edge $e$ of the support graph of $\mathbf{s}$,
assign a vertex $v$ that is incident to $e$. This will allow us
to regroup the factorial terms in $v_{n,K;\mathbf{R}}^{\left(\mathbf{s}\right)}$,
which will allow us to rewrite the expression with rising factorials.
\begin{cor}
\label{cor:Vertical Array Formula}Let $n,K\ge1$, $\mathbf{s}\ge\mathbf{0}$,
and $\mathbf{R}\ge\mathbf{1}$. Suppose that the support graph $G$
of $\mathbf{s}$ is a tree with edges $e_{1},\dots,e_{n-1}$, such
that $e_{j}$ is incident with vertex $j$ in $G$ for $1\le j\le n-1$.
Then,

\begin{eqnarray}
v_{n,K;\mathbf{R}}^{\left(\mathbf{s}\right)} & = & \prod_{i=1}^{n}\frac{\left(R_{i}-1+\sum_{k\sim i}s_{i,k}\right)!}{\left(R_{i}-1\right)!\left(R_{i}+\sum_{k\sim i}\left(s_{i,k}-1\right)\right)!}\times\nonumber \\
 &  & \sum_{A_{e_{1}}=0}^{s_{e_{1}}-1}\cdots\sum_{A_{e_{n-1}}=0}^{s_{e_{n-1}}-1}\left(K-R_{n}-\sum_{k\sim n}A_{n,k}+1\right)^{\left(\sum_{k\sim n}\left(s_{n,k}-1\right)+R_{n}\right)}\times\nonumber \\
 &  & \prod_{j=1}^{n-1}\left[\left(K-R_{j}-\sum_{k\sim j}A_{j,k}+1\right)^{\left(R_{j}+\sum_{k\sim j}A_{j,k}-A_{e_{j}}-1\right)}\times\right.\nonumber \\
 &  & \left.\left(K+s_{e_{j}}-A_{e_{j}}\right)^{\left(\sum_{k\sim j}\left(s_{j,k}-A_{j,k}-1\right)-s_{e_{j}}+A_{e_{j}}+1\right)}\right]\label{eq:Corollary equation}
\end{eqnarray}
where for each edge $e_{j}=\left\{ j,\ell\right\} $ in $G$, the
summation variable $A_{e_{j}}$ can be equivalently written as $A_{j,\ell}$
and $A_{\ell,j}$. As in \ref{thm:Vertical Array Formula}, the sum
$\sum_{k\sim j}$ is over all indices $k$ that are adjacent to $j$
in the support graph of $\mathbf{s}$. Furthermore, $v_{n,K;\mathbf{R}}^{\left(\mathbf{s}\right)}$
as expressed in this corollary is a polynomial in $K$.
\end{cor}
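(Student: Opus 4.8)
The plan is to transform the closed form of \ref{thm:Vertical Array Formula} by regrouping its factorial terms according to the edge-to-vertex assignment $e_{j}\mapsto j$, and then read off polynomiality from the regrouped shape. I will first treat the case $R_{i}\le K$ for all $i$ and return to $R_{i}>K$ at the end (there \ref{thm:Vertical Array Formula} already gives $v_{n,K;\mathbf{R}}^{(\mathbf{s})}=0$, and I will note the right-hand side of \ref{eq:Corollary equation} vanishes too, as a by-product of the argument below). Throughout I write $x^{(m)}=x(x+1)\cdots(x+m-1)$ and use that $x^{(m)}=0$ exactly when $0$ lies in the factor-range $[x,x+m-1]$, i.e. when $x\le 0\le x+m-1$.

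The first step is to rewrite the summand of \ref{thm:Vertical Array Formula}. The $K$-free product $\prod_{i=1}^{n}\frac{(R_{i}-1+\sum_{k\sim i}s_{i,k})!}{(R_{i}-1)!\,(R_{i}+\sum_{k\sim i}(s_{i,k}-1))!}$ factors out of every term; each vertex factor becomes a rising factorial, $\frac{(K+\sum_{k\sim i}(s_{i,k}-A_{i,k}-1))!}{(K-R_{i}-\sum_{k\sim i}A_{i,k})!}=(K-R_{i}-\sum_{k\sim i}A_{i,k}+1)^{(R_{i}+\sum_{k\sim i}(s_{i,k}-1))}$; and each edge factor becomes a reciprocal rising factorial, $\frac{(K-A_{e_{j}}-1)!}{(K+s_{e_{j}}-A_{e_{j}}-1)!}=\frac{1}{(K-A_{e_{j}})^{(s_{e_{j}})}}$. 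Since the support graph $G$ is a tree with $j\in e_{j}$, sending each $j<n$ to the second endpoint $\mathrm{par}(j)$ of $e_{j}$ is an arborescence rooted at $n$. For each $j\le n-1$ I absorb $\frac{1}{(K-A_{e_{j}})^{(s_{e_{j}})}}$ into the vertex-$j$ rising factorial using the polynomial identity in $K$ $\frac{(a)^{(M)}}{(b)^{(N)}}=(a)^{(b-a)}\,(b+N)^{(M-N-b+a)}$, which holds whenever $0\le b-a$ and $0\le M-N-(b-a)$. Taking $a=K-R_{j}-\sum_{k\sim j}A_{j,k}+1$, $M=R_{j}+\sum_{k\sim j}(s_{j,k}-1)$, $b=K-A_{e_{j}}$, $N=s_{e_{j}}$, one checks $b-a=R_{j}+\sum_{k\sim j}A_{j,k}-A_{e_{j}}-1\ge R_{j}-1\ge 0$ and $M-N-(b-a)=\sum_{k\sim j,\,k\ne\mathrm{par}(j)}(s_{j,k}-A_{j,k}-1)\ge 0$ for every tuple with $0\le A_{e_{\ell}}\le s_{e_{\ell}}-1$. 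The two rising factorials produced are exactly the bracketed pair in \ref{eq:Corollary equation}, the vertex-$n$ factor (which has no assigned edge) is the isolated rising factorial there, and the constant product is the prefactor. Hence, for every tuple that also satisfies $A_{e_{j}}\le K-1$ for all $j$ (so the factorials of \ref{thm:Vertical Array Formula} are defined), the summand there equals the corresponding summand of \ref{eq:Corollary equation}.

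It remains to show that raising each upper bound from $\min(s_{e_{j}},K)-1$ to $s_{e_{j}}-1$ adds only zero terms, i.e. that a summand of \ref{eq:Corollary equation} with $A_{e_{j}}\ge K$ for some $j\le n-1$ vanishes. I walk toward the root: put $m=\mathrm{par}(j)$, so $A_{e_{j}}$ is one of the terms of $\sum_{k\sim m}A_{m,k}$ and hence the base $K-R_{m}-\sum_{k\sim m}A_{m,k}+1\le 1-R_{m}\le 0$ at $m$. If $m=n$, the lone vertex-$n$ rising factorial has base $\le 0$ and last factor $K+\sum_{k\sim n}(s_{n,k}-A_{n,k}-1)\ge K\ge 1$, hence is $0$. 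If $m<n$ and $A_{e_{m}}<K$, the first of $m$'s two rising factorials has base $\le 0$ and last factor $K-A_{e_{m}}-1\ge 0$, hence is $0$. If $m<n$ and $A_{e_{m}}\ge K$, repeat with $m$ in place of $j$; this terminates since $\mathrm{par}$ is acyclic and reaches $n$. Consequently the right-hand sum in \ref{eq:Corollary equation} equals $\sum_{A_{e_{j}}\le\min(s_{e_{j}},K)-1}(\cdots)$, which by \ref{thm:Vertical Array Formula} is $v_{n,K;\mathbf{R}}^{(\mathbf{s})}$, proving \ref{eq:Corollary equation}. The same analysis handles $R_{i}>K$: for any tuple in the (new) range, either some $A_{e_{j}}\ge K$ (just treated) or all $A_{e_{j}}<K$, in which case the vertex-$i$ rising factorial — the whole one if $i=n$, the first piece if $i<n$ — has base $\le K-R_{i}+1\le 0$ and last factor $\ge 0$ (namely $\ge K$ when $i=n$, and $=K-A_{e_{i}}-1\ge 0$ when $i<n$), hence vanishes; so the right-hand side is $0=v_{n,K;\mathbf{R}}^{(\mathbf{s})}$.

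Finally, polynomiality is immediate from the regrouped form: each summand is a $K$-free constant times a product of rising factorials $\ell(K)^{(m)}$ with $\ell$ a monic degree-one polynomial in $K$ and $m$ a nonnegative integer independent of $K$ (nonnegativity of all four exponents was verified above), hence a polynomial in $K$; the outer sum ranges over the finite set $0\le A_{e_{j}}\le s_{e_{j}}-1$, whose bounds do not involve $K$; so $v_{n,K;\mathbf{R}}^{(\mathbf{s})}$ is a polynomial in $K$. I expect the vanishing step of the third paragraph to be the main obstacle — keeping the signs straight in the tree-walk and confirming it terminates at the root — while the rest is routine bookkeeping with Pochhammer symbols.
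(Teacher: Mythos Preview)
Your proof is correct and follows essentially the same route as the paper's: factor out the $K$-free product, pair each edge factor $\frac{(K-A_{e_j}-1)!}{(K+s_{e_j}-A_{e_j}-1)!}$ with the vertex-$j$ factor to rewrite everything as rising factorials, then verify that the extra terms introduced by lifting the upper bounds to $s_{e_j}-1$ vanish. Your tree-walk toward the root is logically equivalent to the paper's subgraph argument (the paper takes the vertex of the ``overflow'' subgraph closest to $n$, which is exactly where your walk halts), and your explicit verification of the $R_i>K$ case is a small improvement in care over the paper, which leaves that to the reader via \ref{thm:Vertical Array Formula}.
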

Note that given an arbitrary tree with vertices $1,\dots,n$, we can
achieve the incidence condition in \ref{cor:Vertical Array Formula}
by repeatively taking a leaf vertex $k\neq n$, label the edge incident
to vertex $k$ as $e_{k}$, then delete both vertex $k$ and edge
$e_{k}$.
\begin{proof}
First, we rearrange the expression for $v_{n,K;\mathbf{R}}^{\left(\mathbf{s}\right)}$
in \ref{thm:Vertical Array Formula} to obtain
\begin{eqnarray}
v_{n,K;\mathbf{R}}^{\left(\mathbf{s}\right)} & = & \prod_{i=1}^{n}\frac{\left(R_{i}-1+\sum_{k\sim i}s_{i,k}\right)!}{\left(R_{i}-1\right)!\left(R_{i}+\sum_{k\sim i}\left(s_{i,k}-1\right)\right)!}\times\nonumber \\
 &  & \sum_{A_{e_{1}}=0}^{\min\left(s_{e_{1}},K\right)-1}\cdots\sum_{A_{e_{n-1}}=0}^{\min\left(s_{e_{n-1}},K\right)-1}\frac{\left(K+\sum_{k\sim n}\left(s_{n,k}-A_{n,k}-1\right)\right)!}{\left(K-R_{n}-\sum_{k\sim n}A_{n,k}\right)!}\times\nonumber \\
 &  & \prod_{j=1}^{n-1}\left[\frac{\left(K-A_{e_{j}}-1\right)!}{\left(K-R_{j}-\sum_{k\sim j}A_{j,k}\right)!}\cdot\frac{\left(K+\sum_{k\sim j}\left(s_{j,k}-A_{j,k}-1\right)\right)!}{\left(K+s_{e_{j}}-A_{e_{j}}-1\right)!}\right]\label{eq:Proof equation}
\end{eqnarray}
As the sums $\sum_{k\sim j}$ are over the support graph of $\mathbf{s}$,
and $e_{j}$ is incident to vertex $j$, we have $\sum_{k\sim n}\left(s_{n,k}-1\right)\ge0$,
$A_{e_{j}}\le\sum_{k\sim j}A_{j,k}$, and $s_{e_{j}}-A_{e_{j}}-1\le\sum_{k\sim j}\left(s_{j,k}-A_{j,k}-1\right)$.
Hence, we can rewrite the ratios of factorials in rows 2 and 3 into
rising factorials. This gives us the expression in \ref{eq:Corollary equation},
but still retaining the upper bounds of $K-1$. It remains to show
that the summation term is equal to zero if $A_{e_{j}}\ge K$ for
some edge $e_{j}$.

Now, suppose $0\le A_{e_{\ell}}\le s_{e_{\ell}}-1$ holds for all
edges $e_{\ell}$, but there exists some edge $e_{j}$ such that $A_{e_{j}}\ge K$.
Let $G^{\prime}=\left(V,E^{\prime}\right)$ be the subgraph of $G$
such that $\left\{ i,k\right\} \in E^{\prime}$ if and only if $A_{i,k}\ge K$.
As $G^{\prime}$ is a forest, there must a vertex $\ell$ that is
incident to some edge $\left\{ j,\ell\right\} \in G^{\prime}$, but
$e_{\ell}\notin G^{\prime}$. If $\ell=n$, then
\begin{eqnarray*}
\left(K-R_{n}-\sum_{k\sim n}A_{n,k}+1\right)^{\left(\sum_{k\sim n}\left(s_{n,k}-1\right)+R_{n}\right)} & = & \frac{\left(K+\sum_{k\sim n}\left(s_{n,k}-A_{n,k}-1\right)\right)!}{\left(K-R_{n}-\sum_{k\sim n}A_{n,k}\right)!}\\
 & = & 0
\end{eqnarray*}
as $A_{n,k}\le s_{n,k}-1$ and $K-\sum_{k\sim n}A_{n,k}\le K-A_{n,\ell}\le0$.
Otherwise, we have $\ell\neq n$, which yields 
\begin{eqnarray*}
\left(K-R_{\ell}-\sum_{k\sim\ell}A_{\ell,k}+1\right)^{\left(R_{\ell}+\sum_{k\sim\ell}A_{\ell,k}-A_{e_{\ell}}-1\right)} & = & \frac{\left(K-A_{e_{\ell}}-1\right)!}{\left(K-R_{\ell}-\sum_{k\sim\ell}A_{\ell,k}\right)!}\\
 & = & 0
\end{eqnarray*}
as $e_{\ell}\notin G^{\prime}$ implies that $A_{e_{\ell}}\le K-1$,
and $\left\{ j,\ell\right\} \in G^{\prime}$ implies that $K-\sum_{k\sim\ell}A_{\ell,k}\le K-A_{\ell,j}\le0$.
In both cases, at least one of the rising factorials is zero within
the summation term, so the entire term is zero if $A_{e_{j}}\ge K$.

Finally, since the number of terms in the rising factorials is independent
of $K$, and the number of summation terms is determined by the $s_{e_{j}}$'s,
the expression for $v_{n,K;\mathbf{R}}^{\left(\mathbf{s}\right)}$
as written in this corollary is a polynomial in $K$, as desired.
\end{proof}
For example, suppose $n=3$ and $s_{1,2}=0$. Then, our formula for
$v_{n,K;\mathbf{R}}^{\left(\mathbf{s}\right)}$ in \ref{eq:v3 example}
can be written as
\begin{eqnarray*}
v_{n,K;\mathbf{R}}^{\left(\mathbf{s}\right)} & = & \frac{\left(R_{1}+s_{1,3}+s_{2,3}-1\right)!}{\left(R_{1}-1\right)!\left(R_{2}-1\right)!\left(R_{3}-1\right)!\left(R_{1}+s_{1,3}+s_{2,3}-2\right)!}\times\\
 &  & \sum_{A_{1,3}=0}^{s_{1,3}-1}\sum_{A_{2,3}=0}^{s_{2,3}-1}\left(K-R_{3}-A_{1,3}-A_{2,3}+1\right)^{\left(R_{3}+s_{1,3}+s_{2,3}-2\right)}\times\\
 &  & \left[\left(K-R_{1}-A_{1,3}+1\right)^{\left(R_{1}-1\right)}\left(K-R_{2}-A_{2,3}+1\right)^{\left(R_{2}-1\right)}\right]
\end{eqnarray*}
which is a polynomial in $K$.

With this corollary, we have obtained an expression for $v_{n,K;\mathbf{R}}^{\left(\mathbf{s}\right)}$
that is a polynomial in $K$ for all $\mathbf{R\ge1}$, if the support
graph of $\mathbf{s}$ is a tree. We can substitute this into \ref{thm:Canonical Vertical Formula}
to obtain a polynomial expression for $f_{n,K}^{\left(\mathbf{q};\mathbf{s}\right)}$
by \ref{thm:Label to Canonical Array}. Then, using \ref{eq:Genfunc Label Relation},
we can substitute $K=x$ into the expression for $f_{n,K}^{\left(\mathbf{q};\mathbf{s}\right)}$
to obtain $A_{n}^{\left(\mathbf{q};\mathbf{s}\right)}\left(x\right)$,
proving \ref{thm:main formula} as desired.

\section{Acknowledgements}

Many thanks for the help of I.P. Goulden for supporting me in my doctoral
studies, during which this research is conducted, as well as the editing
and verifying of the results in this paper.

\bibliographystyle{plain}
\bibliography{uw-ethesis}

\end{document}